\newtheorem{theorem}{Theorem}[section]
\newtheorem{lemma}[theorem]{Lemma}
\newtheorem{corollary}[theorem]{Corollary}
\newtheorem{prop}[theorem]{Proposition}
\newtheorem{remark}[theorem]{Remark}
\newtheorem{definition}[theorem]{Definition}
\newtheorem{example}[theorem]{Example}
\numberwithin{equation}{section}
\providecommand{\AMS}{$\mathcal{A}$\kern-.1667em%
\lower.25em\hbox{$\mathcal{M}$}\kern-.125em$\mathcal{S}$}
\begin{document}
\title{On approximate orthogonality and symmetry of operators in semi-Hilbertian structure}

\author[Jeet Sen,  Debmalya Sain and Kallol Paul]{Jeet Sen, Debmalya Sain and Kallol Paul }

\newcommand{\acr}{\newline\indent}

\address[Sen]{Department of Mathematics\\ Jadavpur University\\ Kolkata 700032\\ West Bengal\\ INDIA}
\email{senet.jeet@gmail.com}

\address[Sain]{Department of Mathematics\\ Indian Institute of Science\\ Bengaluru 560012\\ Karnataka\\ INDIA}
\email{saindebmalya@gmail.com}

\address[Paul]{Department of Mathematics\\ Jadavpur University\\ Kolkata 700032\\ West Bengal\\ INDIA}
\email{kalloldada@gmail.com}

\thanks{ The research of Jeet Sen is supported by CSIR, Govt. of India. The research of Prof. Paul  is supported by project MATRICS(MTR/2017/000059)  of DST, Govt. of India.}

\subjclass[2010]{Primary 46B20, 47L05  Secondary 46C50}
	\keywords{ Positive operators, approximate orthogonality, right symmetric point, left symmetric point, norm attainment set, compact operators.}
\maketitle
\begin{abstract}
The purpose of the article is to generalize the concept of approximate Birkhoff-James orthogonality, in the semi-Hilbertian structure. Given a positive operator $ A $ on a Hilbert space $ \mathbb{H}, $ we define $ (\epsilon,A)- $approximate orthogonality and $ (\epsilon,A)- $approximate orthogonality in the sense of Chmieli$\acute{n}$ski and establish a relation between them. We also characterize $ (\epsilon,A)- $approximate orthogonality in the sense of Chmieli$\acute{n}$ski for $A$-bounded and $A$-bounded compact operators. We further generalize the concept of right symmetric and left symmetric operators on a Hilbert space. The utility of these notions are illustrated by extending some of the previous results obtained by various authors in the setting of Hilbert spaces.

\end{abstract}

\maketitle
\section{Introduction}
In recent times, the geometry of operator spaces has been investigated by various authors using Birkhoff-James orthogonality techniques \cite{BS, GSP, S, sp, spm, turn, t}. The notion of approximate Birkhoff-James orthogonality \cite{C, CSW, dskpam} allows us to generalize some of the results presented in these articles and also to study some related perturbation and stability questions related closely to the geometric and analytic structure of Banach (Hilbert) spaces. Our aim in the present article is to study the concept of approximate Birkhoff-James orthogonality in the semi-Hilbertian structure, as introduced in \cite{acg, acg2}. Let us now introduce some relevant notations and terminologies which will be used throughout this article.\\

 We use the symbols $ \mathbb{H}$ and $ \mathbb{X}$ to denote a Hilbert space and a Banach space, respectively. Unless mentioned specifically, we work with both real and complex Hilbert spaces. The scalar field is denoted by $ \mathbb{K}(=\mathbb{R} ~\text{or}~ \mathbb{C}).$   The underlying inner product and the corresponding norm on $ \mathbb{H} $ are denoted by $ \langle~,~ \rangle $ and $ \|\cdot\|, $ respectively. Let  $\mathbb{L}(\mathbb{H})(\mathbb{K}(\mathbb{H}))$ denote the Banach space of all bounded (compact) linear operators on $ \mathbb{H} $, endowed with the usual operator norm.  $A \in \mathbb{L}(\mathbb{H})$ is said to be a positive operator if $A = A^*$ and $\langle
 Ax,x\rangle  \geq 0 $ for all $x \in \mathbb{H}$. A positive operator $ A $ is said to be positive definite if  $\langle Ax,x\rangle > 0$ for all $ x \ ( \neq \theta)\in \mathbb{H} $. It is well known \cite{acg} that  any positive operator $A \in \mathbb{L}(\mathbb{H})$ induces a positive semi-definite sesquilinear form $\langle ~,~\rangle _A$ on $\mathbb{H},$ given by $\langle x,y\rangle _A = \langle Ax,y\rangle ,$ where $x , y \in \mathbb{H}.$ It is easy to see that $\langle ~,~\rangle _A$ induces a semi-norm $ \|\cdot\|_A $ on $ \mathbb{H}, $ given by $ \| x \|_A = \sqrt{\langle Ax,x\rangle}.$ Henceforth we reserve the symbol $A$ for a positive operator on $\mathbb{H}.$ The null space and the range space of $A$ is denoted by $N(A)$ and  $R(A),$ respectively. The symbol $I$ is used to denote the identity operator on the concerned space.

 In a normed space $\mathcal{N}$, an element $x \in \mathcal{N}$ is said to be \textit{Birkhoff-James orthogonal} \cite{B,J} to another element $y \in \mathcal{N}$, written as $x \bot_{B} y$ if $\|x +\lambda y\| \geq \|x\|$ for all $\lambda \in \mathbb{K}.$ If the normed space $\mathcal{N}$ is a Hilbert space, then the Birkhoff-James orthogonality is equivalent to usual inner product orthogonality $ \langle x, y\rangle =0.$ In \cite{C}, the author introduced the following definition of an approximate version of Birkhoff-James orthogonality:
 \begin{definition}\label{approximate Birkhoff orthogonal}
  Let $\mathcal{N}$ be a normed space and  $\epsilon \in [0,1).$ An element $x \in \mathcal{N}$ is said to be approximate Birkhoff-James orthogonal to another element $y \in \mathcal{N}$, denoted by $x \bot^{\epsilon}_{B} y,$ if $ \|x + \lambda y \|^2 \geq \|x\|^2 -2\epsilon \|x\| \|\lambda y\|$ for all $\lambda \in \mathbb{K}.$
 \end{definition}
 Clearly, if $\epsilon =0,$ the above definition coincides with the definition of Birkhoff-James orthogonality in a normed space $ \mathcal{N} $. In the Hilbert space setting, Definition \ref{approximate Birkhoff orthogonal} coincides with the usual definition of approximate $\epsilon-$orthogonality. We note that an element $x \in \mathbb{H}$ is said to be \textit{approximate $ \epsilon- $orthogonal} to another element $y \in \mathbb{H}$, denoted by $x \bot^{\epsilon} y,$ if $|\langle x,y \rangle| \leq \epsilon \|x\| \|y\|,$ where $\epsilon \in [0,1)$ (see \cite{C}). It should be mentioned here that there exists another standard notion of approximate Birkhoff-James orthogonality in normed spaces (see \cite{SSD}), which is not dealt with in this work.\

 We now focus on the semi-Hilbertian structure on $ \mathbb{H}$ induced by the positive operator $A$. An operator $ T \in \mathbb{L}(\mathbb{H}) $ is said to be $ A- $bounded if there exists $ c > 0$ such that  $\|Tx\|_A \leq c\|x\|_A ~\forall
 x\in \mathbb{H} .$
 Let $B_{A^{1/2}}(\mathbb{H} )$ denote the collection of all $A$-bounded operators, i.e., $   B_{A^{1/2}}(\mathbb{H} )= \big\{ T \in \mathbb{L}(\mathbb{H}) :~ \exists~ c > 0 ~\text{such that}~\|Tx\|_A \leq c\|x\|_A ~\forall
 x\in \mathbb{H} \big\}. $ The $ A- $norm of $ T \in B_{A^{1/2}}(\mathbb{H}) $ is given by:
 \[ \|T\|_A = \sup_{x\in \mathbb{H}, \|x\|_A =1}  \|Tx\|_A = \sup \left\lbrace |\langle Tx,y\rangle _A|: x,y \in \mathbb{H}, \|x\|_A= \|y\|_A =1\right\rbrace. \]  Let us now recall some relevant definitions from  \cite{acg} and \cite{z}.

\begin{definition}\label{A ortho}
 Let  $ \mathbb{H} $ be a Hilbert space. An element $x \in \mathbb{H}$ is said to be $A$-orthogonal to an element $y \in \mathbb{H},$ denoted by $x \bot_A y,$ if $\langle x, y\rangle_A = 0.$
\end{definition}
Clearly, in a Hilbert space $\mathbb{H}$, for a positive operator $A,$ $x \bot_A y ~\Leftrightarrow ~ \|x+ \lambda y\|_A \geq \|x\|_A$ for all $\lambda \in \mathbb{K}$. Further note that if $A=I$, then the above definition coincides with the usual notion of orthogonality in Hilbert spaces, which in turn is equivalent to Birkhoff-James orthogonality.

\begin{definition}\label{A BJ ortho}Let $\mathbb{H}$ be a Hilbert space.
$T \in B_{A^{1/2}}(\mathbb{H})$ is said to be $A$-Birkhoff-James orthogonal to $S \in B_{A^{1/2}}(\mathbb{H}),$ denoted by
$T\bot_A^B S, $ if $\|T+\gamma S\|_A \geq \|T\|_A$ for all $\gamma \in \mathbb{K}.$
\end{definition}

Note that the above definition gives a generalization of the Birkhoff-James orthogonality of bounded linear operators on a Hilbert space. We also make use of the following notations:\\
For a positive operator  $A \in \mathbb{L}(\mathbb{H}),$ $B_{\mathbb{H}(A)}$ and $S_{\mathbb{H}(A)}$ denote the $A$-unit ball
and the $A$-unit sphere of $ \mathbb{H}, $ respectively, i.e., $B_{\mathbb{H}(A)} = \left\lbrace x \in \mathbb{H} : \|x\|_A \leq 1\right\rbrace $ and  $S_{\mathbb{H}(A)} = \left\lbrace x \in \mathbb{H} :
\|x\|_A = 1\right\rbrace $. For any $T \in B_{A^{1/2}}(\mathbb{H}),$ the $A$-norm attainment set $M^T_A$ of $ T $  was considered in \cite{z}:
\[ M^T_A = \left\lbrace x \in \mathbb{H} : \|x\|_A = 1,~\|Tx\|_A = \|T\|_A \right\rbrace. \]\\
Let us now define  $(\epsilon,A)-$approximate orthogonality in the semi-Hilbertian structure.
\begin{definition}
  Let $\mathbb{H}$ be a Hilbert space and  $\epsilon \in [0,1).$ An element $x \in \mathbb{H}$ is said to be $(\epsilon,A)$-approximate orthogonal to another element $y \in \mathbb{H},$ written as $x\bot_{A}^{\epsilon}y,$ if $|\langle x, y\rangle_A| \leq \epsilon \|x\|_A \|y\|_A.$
\end{definition}
It is easy to see that if $x \in N(A),$ then $x\bot_{A}y$ for all $y \in \mathbb{H}$ and therefore $x\bot_{A}^{\epsilon}y,$ for all $y \in \mathbb{H}$ and for any $\epsilon \in [0,1).$ Also, if we consider $A=I,$ the above definition coincides with the usual definition of approximate $\epsilon-$orthogonality in a Hilbert space.
\begin{definition}
  Let $\mathbb{H}$ be a Hilbert space and  $\epsilon \in [0,1).$ An element $x \in \mathbb{H}$ is said to be $(\epsilon,A)$-approximate orthogonal in the sense of Chmieli$\acute{n}$ski to another element $y \in \mathbb{H},$  written as $x\bot_{\epsilon(A)}y,$ if $\|x+\lambda y\|_A^2 \geq \|x\|_A^2 -2 \epsilon \|x\|_A \|\lambda y\|_A$ for all $\lambda \in \mathbb{K}.$
\end{definition}
Clearly, if we consider $A=I$ and $\epsilon = 0$, the above definition is equivalent to the usual definition of Birkhoff-James orthogonality in a Hilbert space. Following similar motivation, we introduce  $(\epsilon,A)$-approximate orthogonality in the sense of Chmieli$\acute{n}$ski, for $A$-bounded operators in the following way:
\begin{definition}
Let  $T \in B_{A^{1/2}}(\mathbb{H})$  and $\epsilon \in [0,1).$  Then $T$ is said to be \textit{$(\epsilon,A)$-approximate orthogonal in the sense of Chmieli$\acute{n}$ski} to another element $S \in B_{A^{1/2}}(\mathbb{H}),$  written as $T\bot_{\epsilon(A)}S,$ if $\|T+\lambda S\|_A^2 \geq \|T\|_A^2 -2 \epsilon \|T\|_A \|\lambda S\|_A$ for all $\lambda \in \mathbb{K}.$
 \end{definition}
  In \cite{S}, the author introduced the notions of left symmetric and right symmetric points in $\mathbb{L(\mathbb{H})}$. Motivated by that we introduce the following  notions of symmetry for $A$- bounded operators:
     \begin{definition}\label{A approx left symmetry right symmetry def}
        Let $\mathbb{H}$ be a Hilbert space and let $\epsilon \in [0,1).$
        An element $T \in B_{A^{1/2}}(\mathbb{H})$ is said to be $(\epsilon,A)$-approximate right symmetric in the sense of Chmieli$\acute{n}$ski, if for any $S \in B_{A^{1/2}}(\mathbb{H}),$  $S\bot_{\epsilon(A)}T \Rightarrow T\bot_{\epsilon(A)}S.$ \\Similarly,
        an element $T \in B_{A^{1/2}}(\mathbb{H})$ is said to be $(\epsilon,A)$-approximate left symmetric in the sense of Chmieli$\acute{n}$ski, if for any $S \in B_{A^{1/2}}(\mathbb{H}),$  $T\bot_{\epsilon(A)}S \Rightarrow S\bot_{\epsilon(A)}T.$
  \end{definition}
Note that by putting $A =I,$ we obtain the following definitions of approximate right symmetric points and approximate left symmetric points in $ \mathbb{L(\mathbb{H})}: $  An element $T \in \mathbb{L(\mathbb{H})}$ is said to be \textit{approximate right symmetric point}, if for any $S \in \mathbb{L(\mathbb{H})},$  $S\bot^{\epsilon}_{B}T \Rightarrow T\bot^{\epsilon}_{B}S,$ where $\epsilon \in [0,1).$ Similarly, an element $T \in \mathbb{L(\mathbb{H})}$ is said to be \textit{approximate left symmetric point}, if for any $S \in \mathbb{L(\mathbb{H})},$  $T\bot^{\epsilon}_{B}S \Rightarrow S\bot^{\epsilon}_{B}T,$ $\epsilon \in [0,1).$ Taking $ \epsilon=0, $ we obtain the corresponding definitions of the  symmetric points in $ \mathbb{L(\mathbb{H})}. $
 We refer the reader to \cite{dskpam, smp} for further details on approximate Birkhoff-James orthogonality. In this article, our main goal is to explore the concept of approximate Birkhoff-James orthogonality of operators in the semi-Hilbertian structure.  In the following section we characterize $(\epsilon,A)$-approximate orthogonality of $A-$bounded operators in the semi-Hilbertian structure. In the final section we study $(\epsilon,A)$-approximate left and right symmetric operators in the setting of finite-dimensional as well as infinite dimensional Hilbert spaces.

\section{$(\epsilon,A)$-approximate orthogonality of operators}
In  \cite {psmm}, the authors introduced the notions of the positive part and the negative part of an element in a complex Banach space, along a
particular direction. In the same spirit, for the purpose of our investigation of $ (\epsilon,A)- $approximate orthogonality in the sense of Chmieli$\acute{n}$ski, we introduce the following:\\
 Let $x\in \mathbb{H}$ and let  $\alpha \in \mathbb{U}=\{ \gamma \in \mathbb{C}: |\gamma|=1$
,$~arg ~\gamma \in [0,\pi)\}.$ Then
$$(x)_\alpha^{A+}=\left\lbrace y\in \mathbb{H}:\|x+\lambda y\|_A \geq \|x\|_A ~\text{for all}~\lambda = t\alpha,t\geq 0\right\rbrace, $$
$$(x)_\alpha^{A-}=\left\lbrace y\in \mathbb{H}:\|x+\lambda y\|_A \geq \|x\|_A ~\text{for all}~\lambda = t\alpha,t\leq 0\right\rbrace. $$
 Using these definitions, we note the following results in the form of a proposition.
\begin{prop}\label{x+,x-}
 Let $\mathbb{H}$ be a  complex Hilbert space and let $x,y \in \mathbb{H}.$ Let $\alpha \in
 \mathbb{U}=\{ \gamma \in \mathbb{C}: |\gamma|=1 ,~arg ~\gamma \in [0,\pi)\}.$ Then for each $\alpha \in \mathbb{U},$ the following are true.
 \begin{itemize}
 \item [($i$)] either $y \in (x)_\alpha^{A+}$ or $y \in (x)_\alpha^{A-}$.

\item [($ii$)] $x \bot_Ay $ if and only if  $y \in (x)_\alpha^{A+}$ and $y \in (x)_\alpha^{A-}$ for each $\alpha \in \mathbb{U} $.

\item [($iii$)]  $y \in (x)_\alpha^{A+}$ implies $\eta y \in (\mu x)_\alpha^{A+}$ for all $\eta , \mu \geq 0$.

\item [($iv$)] $y \in (x)_\alpha^{A+}$ implies $-y \in (x)_\alpha^{A-}$ and $y \in (-x)_\alpha^{A-}$.

\item [($v$)] $y \in (x)_\alpha^{A-}$ implies $\eta y \in (\mu x)_\alpha^{A-}$ for all $\eta , \mu \geq 0$.

\item [($vi$)] $y \in (x)_\alpha^{A-}$ implies $-y \in (x)_\alpha^{A+}$ and $y \in (-x)_\alpha^{A+}$.
\end{itemize}
\end{prop}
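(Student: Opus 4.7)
The plan is to reduce every statement to a single scalar inequality. For fixed $\alpha \in \mathbb{U}$ and $\lambda = t\alpha$ with $t \in \mathbb{R}$, expanding the $A$-seminorm using sesquilinearity gives
\[
\|x + t\alpha y\|_A^2 = \|x\|_A^2 + 2t\, \mathrm{Re}\bigl(\bar{\alpha}\langle x, y\rangle_A\bigr) + t^2 \|y\|_A^2,
\]
so if I set $c = c(\alpha) := \mathrm{Re}\bigl(\bar{\alpha}\langle x, y\rangle_A\bigr)$, I obtain the identity
$\|x + t\alpha y\|_A^2 - \|x\|_A^2 = t\bigl(2c + t\|y\|_A^2\bigr)$.
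A routine sign analysis on the factored right-hand side (with necessity handled by letting $t \to 0^{\pm}$) then yields the crucial equivalences
\[
y \in (x)_\alpha^{A+} \iff c(\alpha) \geq 0, \qquad y \in (x)_\alpha^{A-} \iff c(\alpha) \leq 0.
\]

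Part (i) is immediate from the trichotomy on the real number $c(\alpha)$. For part (ii), the forward implication is clear since $\langle x, y\rangle_A = 0$ forces $c(\alpha) = 0$ for every $\alpha$. For the converse, the hypothesis gives $c(\alpha) = 0$ for all $\alpha \in \mathbb{U}$; specialising to $\alpha = 1$ (which lies in $\mathbb{U}$ since $\arg 1 = 0$) yields $\mathrm{Re}\langle x, y\rangle_A = 0$, while $\alpha = i$ (also in $\mathbb{U}$ since $\arg i = \pi/2$) yields $\mathrm{Re}\bigl(-i\langle x, y\rangle_A\bigr) = \mathrm{Im}\langle x, y\rangle_A = 0$. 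Together these force $\langle x, y\rangle_A = 0$, i.e., $x \bot_A y$.

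Parts (iii)--(vi) are homogeneity and sign-flip statements that I would derive either straight from the definition or from the scalar characterisation above. For (iii) and (v), substituting $\lambda \mapsto (\eta/\mu)\lambda$ works directly (with the cases $\mu = 0$ or $\eta = 0$ being trivial); equivalently, the passage $(x, y) \mapsto (\mu x, \eta y)$ with $\mu, \eta \geq 0$ multiplies $c(\alpha)$ by $\eta\mu \geq 0$, preserving its sign and hence preserving membership in $(\cdot)_\alpha^{A+}$ (resp.\ $(\cdot)_\alpha^{A-}$). For the sign-flip claims (iv) and (vi), replacing $x$ by $-x$ or $y$ by $-y$ negates $c(\alpha)$, which interchanges the $+$ and $-$ membership conditions exactly as the conclusions demand.

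I do not foresee any serious obstacle. The only mildly delicate points are (a) the necessity direction in the scalar characterisation, which relies on choosing $t$ of small magnitude in the appropriate half-line so that the $t^2$ term does not dominate the $2tc$ term, and (b) the observation used in (ii) that $\mathbb{U}$ is large enough to contain both $1$ and $i$, which is the precise reason the definition of $\mathbb{U}$ uses the half-open arc $[0,\pi)$ rather than, say, a single point.
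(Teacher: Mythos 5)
Your proof is correct, but it takes a genuinely different route from the paper's on the one part the paper actually argues. The paper proves ($i$) by contradiction using only convexity of the seminorm: if $\|x+\lambda\alpha y\|_A<\|x\|_A$ for some $\lambda>0$ and some $\lambda_0<0$, writing $x$ as a convex combination of $x+\lambda_0\alpha y$ and $x+\lambda\alpha y$ and applying the triangle inequality gives $\|x\|_A<\|x\|_A$; it then dismisses ($ii$)--($vi$) as trivial. You instead exploit the semi-inner product to get the identity $\|x+t\alpha y\|_A^2-\|x\|_A^2=t\bigl(2c(\alpha)+t\|y\|_A^2\bigr)$ with $c(\alpha)=\mathrm{Re}\bigl(\bar{\alpha}\langle x,y\rangle_A\bigr)$, and derive the clean equivalences $y\in(x)_\alpha^{A+}\iff c(\alpha)\geq 0$ and $y\in(x)_\alpha^{A-}\iff c(\alpha)\leq 0$, from which all six parts follow by sign bookkeeping (your small-$t$ argument for the necessity direction is the only delicate step, and you handle it correctly, including the degenerate case $\|y\|_A=0$). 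The trade-off: the paper's convexity argument for ($i$) is softer and would survive verbatim in an arbitrary seminormed space, whereas your scalar characterisation is tied to the sesquilinear structure but buys a single unified computation that disposes of ($ii$)--($vi$) explicitly rather than by assertion, and in particular makes transparent why both $\alpha=1$ and $\alpha=i$ are needed in ($ii$). Both arguments are sound.
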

\begin{proof}
We only prove the first statement, as all other statements follow trivially. Let $\alpha \in \mathbb{U}$ and let $y \notin (x)_\alpha^{A+}$.
We claim that $y \in (x)_\alpha^{A-}$. Suppose on the contrary that $y \notin (x)_\alpha^{A-}$. Then, there exists $\lambda_0 < 0 $ such that
$\|x+\lambda_0\alpha y\|_A < \|x\|_A$.
 By assumption,  $y \notin (x)_\alpha^{A+}$. So, there exists $\lambda > 0 $ such that $\|x+\lambda \alpha y\|_A < \|x\|_A$. As $\lambda_0 < 0
$ and $\lambda > 0 $, we can find $t \in (0,1)$ such that $0 = (1-t)\lambda_0 + t \lambda$. Thus, $x = (1-t)(x+\lambda_0 \alpha y) + t(x+ \lambda \alpha
y).$ Therefore,
$$ \|x\|_A \leq (1-t)\|x+\lambda_0 \alpha y\|_A + t \|x+\lambda \alpha y\|_A
  <  (1-t)\|x\|_A + t \|x\|_A
  = \|x\|_A,$$
   a contradiction.
    Hence, $y \in (x)_\alpha^{A+}$ or $y \in (x)_\alpha^{A-}$. This completes the proof of the proposition.
\end{proof}
\begin{remark}
Note that by taking $\alpha = 1$ in the above proposition, we obtain the analogous statements for  real Hilbert spaces.
\end{remark}
It is easy to see that approximate $\epsilon-$orthogonality in a Hilbert space is symmetric. In our next proposition we note that $\bot_{A}^{\epsilon}$ relation is also symmetric.
\begin{prop}\label{commutative}
    Let $\mathbb{H}$ be a Hilbert space. Let $\epsilon \in [0,1)$ and $x,y \in \mathbb{H}.$ Then  $x\bot_{A}^{\epsilon}y$ if and only if $y\bot_{A}^{\epsilon}x.$
\end{prop}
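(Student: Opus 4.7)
The plan is to unwind the definition of $x\bot_A^\epsilon y$ and exploit the fact that the sesquilinear form $\langle\cdot,\cdot\rangle_A$ is conjugate-symmetric, so that the defining inequality is visibly symmetric in $x$ and $y$.

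First I would observe that because $A$ is positive, it is in particular self-adjoint, so for any $x,y\in\mathbb{H}$
\[
\langle x,y\rangle_A \;=\; \langle Ax,y\rangle \;=\; \langle x,Ay\rangle \;=\; \overline{\langle Ay,x\rangle} \;=\; \overline{\langle y,x\rangle_A}.
\]
Taking moduli gives $|\langle x,y\rangle_A| = |\langle y,x\rangle_A|$. Since trivially $\|x\|_A\|y\|_A = \|y\|_A\|x\|_A$, the inequality $|\langle x,y\rangle_A|\le \epsilon\|x\|_A\|y\|_A$ holds if and only if $|\langle y,x\rangle_A|\le \epsilon\|y\|_A\|x\|_A$. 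By the definition of $\bot_A^\epsilon$ this is exactly $x\bot_A^\epsilon y \iff y\bot_A^\epsilon x$.

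There is essentially no obstacle here: the only substantive ingredient is the Hermitian property of $\langle\cdot,\cdot\rangle_A$, which is immediate from $A = A^*$. The proof works uniformly in both the real and complex cases (in the real case conjugation is the identity). It is also worth remarking, in passing, that the statement holds even if one of the vectors lies in $N(A)$, since then both sides of the defining inequality vanish.
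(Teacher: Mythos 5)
Your proof is correct and follows exactly the same route as the paper, which simply cites the identity $|\langle x,y\rangle_A| = |\langle y,x\rangle_A|$; you merely supply the one-line justification (Hermitian symmetry of $\langle\cdot,\cdot\rangle_A$ via $A=A^*$) that the paper leaves implicit.
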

\begin{proof}
  The proof follows from the fact that $|\langle x, y\rangle_A| = |\langle y, x\rangle_A|$ for all $x,y \in \mathbb{H}.$
\end{proof}
It is easy to see that $A$-orthogonality is homogeneous, i.e., if $x,y \in \mathbb{H}$ are such that $x\bot_{A}y,$ then $\alpha x \bot_{A}\beta y$ for all $\alpha, \beta \in \mathbb{K}.$ Moreover, in \cite{C}, the author proved that approximate Birkhoff-James orthogonality in a Banach space (and therefore in  Hilbert space) is homogeneous. In fact, one can easily show that $\bot_{A}^{\epsilon}$ relation is also homogeneous. In our next proposition we observe that  $\bot_{\epsilon(A)}$ relation is homogeneous.
\begin{prop}\label{homogeneous}
   Let $\mathbb{H}$ be a Hilbert space and $\epsilon \in [0,1).$ Then $\bot_{\epsilon(A)}$ relation is homogeneous.
   \end{prop}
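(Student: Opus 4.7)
The plan is to verify homogeneity by a direct scaling argument, exactly as one does for the classical approximate Birkhoff--James orthogonality $\bot_B^\epsilon$ in Banach spaces. Suppose $x, y \in \mathbb{H}$ satisfy $x \bot_{\epsilon(A)} y$, and fix arbitrary scalars $\alpha, \beta \in \mathbb{K}$. The goal is to show $\alpha x \bot_{\epsilon(A)} \beta y$, i.e., $\|\alpha x + \mu \beta y\|_A^2 \geq \|\alpha x\|_A^2 - 2\epsilon \|\alpha x\|_A \|\mu \beta y\|_A$ for every $\mu \in \mathbb{K}$.

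The degenerate cases $\alpha = 0$ or $\beta = 0$ reduce the required inequality to $0 \geq 0$ or to $\|\alpha x\|_A^2 \geq \|\alpha x\|_A^2$, both of which are trivial. So I would assume $\alpha \neq 0$ and $\beta \neq 0$, and for arbitrary $\mu \in \mathbb{K}$ set $\lambda = \mu \beta / \alpha$. Using the fact that the semi-norm $\|\cdot\|_A$ is absolutely homogeneous, I would factor $|\alpha|$ out on both sides: the left-hand side becomes $|\alpha|^2 \|x + \lambda y\|_A^2$, while the right-hand side becomes $|\alpha|^2 \|x\|_A^2 - 2\epsilon |\alpha| \|x\|_A \cdot |\alpha| \|\lambda y\|_A = |\alpha|^2 \bigl(\|x\|_A^2 - 2\epsilon \|x\|_A \|\lambda y\|_A\bigr)$.

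Cancelling $|\alpha|^2 > 0$, the inequality to be proved reduces to $\|x + \lambda y\|_A^2 \geq \|x\|_A^2 - 2\epsilon \|x\|_A \|\lambda y\|_A$, which is precisely the hypothesis $x \bot_{\epsilon(A)} y$ applied to the scalar $\lambda$. Since $\mu$ was arbitrary and $\lambda = \mu\beta/\alpha$ ranges over all of $\mathbb{K}$ as $\mu$ does, this completes the proof.

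There is no real obstacle here: the argument is just positive-homogeneity of the semi-norm combined with the observation that the defining inequality of $\bot_{\epsilon(A)}$ is itself quadratically homogeneous in $(x, \lambda y)$. The only mild subtlety is remembering to dispose of the degenerate cases $\alpha = 0$ and $\beta = 0$ separately, since otherwise the substitution $\lambda = \mu\beta/\alpha$ is not defined.
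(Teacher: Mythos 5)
Your proof is correct and follows essentially the same route as the paper's: factor $|\alpha|^2$ out of $\|\alpha x + \mu\beta y\|_A^2$ using absolute homogeneity of the semi-norm, apply the hypothesis with the rescaled scalar $\mu\beta/\alpha$, and recombine. The only difference is that you also treat the degenerate case $\beta=0$ explicitly, which the paper leaves implicit.
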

\begin{proof}
   Let $x,y \in \mathbb{H}$ be such that $x\bot_{\epsilon(A)}y.$ Let $\alpha , \beta \in \mathbb{K}$ (excluding the obvious case $\alpha = 0$). Then,
   $$\|\alpha x + \lambda \beta y\|_A^2
           \geq  |\alpha|^2 (\|x\|_A^2 - 2 \epsilon \|x\|_A \|\lambda \frac{\beta}{\alpha} y\|_A )
           = \|\alpha x\|_A^2 - 2 \epsilon \|\alpha x\|_A \|\lambda \beta y\|_A.$$
  This completes the proof of our proposition.
\end{proof}
Following  Proposition \ref{homogeneous} it is easy to see that  if $T,S \in B_{A^{1/2}}(\mathbb{H})$ be such that $T \bot_{\epsilon(A)} S,$ then also $\alpha T \bot_{\epsilon(A)} \beta S$ for all $\alpha, \beta \in \mathbb{K}.$
In \cite{CSW}, the authors proved that in a Hilbert space $\mathbb{H},$ for $x,y \in \mathbb{H},$  $x \bot^{\epsilon} y$ if and only if there exists $z \in \mathbb{H}$ such that $x \bot_B z$ and $\|z-y\| \leq \epsilon \|y\|.$
 In the following theorem, we characterize $(\epsilon,A)-$approximate orthogonality  in the semi-Hilbertian structure.
\begin{theorem}\label{characterization}
  Let $\mathbb{H}$ be a Hilbert space. Let $\epsilon \in [0,1)$ and $x,y \in \mathbb{H}.$ Then the following conditions are equivalent:
  \begin{itemize}
    \item [($i$)]$ x\bot_{A}^{\epsilon}y$
    \item [($ii$)] there exists an element $z \in \mathbb{H}$ such that $x \bot_{A} z$ and $\|z-y\|_A \leq \epsilon \|y\|_A.$
  \end{itemize}
\end{theorem}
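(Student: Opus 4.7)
The plan is to prove both implications by direct manipulation of the semi-inner product, using only the Cauchy--Schwarz inequality for $\langle\cdot,\cdot\rangle_A$ (which holds because $\langle\cdot,\cdot\rangle_A$ is a positive semi-definite sesquilinear form) and a Gram--Schmidt-style projection to construct the approximating vector $z$.

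For the implication (ii) $\Rightarrow$ (i), suppose such a $z$ exists. Since $\langle x,z\rangle_A=0$, writing $y = z + (y-z)$ and using sesquilinearity gives $\langle x,y\rangle_A = \langle x,y-z\rangle_A$. Applying Cauchy--Schwarz for the seminorm $\|\cdot\|_A$ yields $|\langle x,y\rangle_A| \leq \|x\|_A\|y-z\|_A \leq \epsilon\|x\|_A\|y\|_A$, which is exactly $x\bot_A^{\epsilon}y$.

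For the implication (i) $\Rightarrow$ (ii), I would split into two cases according to whether $x$ lies in $N(A)$. If $\|x\|_A=0$, then $\langle x,w\rangle_A=0$ for every $w\in\mathbb{H}$, so $z:=y$ trivially satisfies $x\bot_A z$ and $\|z-y\|_A = 0 \leq \epsilon\|y\|_A$. Otherwise $\|x\|_A>0$, and I would take the natural $A$-projection $z:= y - cx$, where $c\in\mathbb{K}$ is chosen (via the appropriate conjugate of $\langle x,y\rangle_A/\|x\|_A^2$) so that $\langle x,z\rangle_A = \langle x,y\rangle_A - \langle x,cx\rangle_A = 0$. A direct computation then gives $|c| = |\langle x,y\rangle_A|/\|x\|_A^2$, hence $\|z-y\|_A = |c|\,\|x\|_A = |\langle x,y\rangle_A|/\|x\|_A$, and the hypothesis $|\langle x,y\rangle_A|\leq\epsilon\|x\|_A\|y\|_A$ delivers $\|z-y\|_A \leq \epsilon\|y\|_A$.

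No step looks genuinely hard; the only point requiring care is the degenerate case $x\in N(A)$, where one cannot divide by $\|x\|_A^2$. Handling this separately and then invoking the semi-inner product Cauchy--Schwarz (which remains valid even when the form is only positive semi-definite) closes the argument cleanly.
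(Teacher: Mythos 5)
Your proposal is correct and follows essentially the same route as the paper: the degenerate case $x \in N(A)$ is handled by taking $z=y$, otherwise $z$ is the $A$-orthogonal projection $y - \frac{\overline{\langle x,y\rangle_A}}{\|x\|_A^2}x$, and the converse is the same Cauchy--Schwarz estimate $|\langle x,y\rangle_A| = |\langle x,y-z\rangle_A| \leq \|x\|_A\|y-z\|_A$. No substantive differences.
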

\begin{proof}
  $($i$) \Rightarrow ($ii$): $ If $\|x\|_A=0,$ i.e., $x \in N(A),$ choose $z=y.$ Clearly, $x \bot_{A} z$ and $0 = \|z-y\|_A \leq \epsilon \|y\|_A.$\\
  Next, we assume that $\|x\|_A \neq 0.$ Choose $z = \frac{-\overline{\langle x,y \rangle_A}}{\|x\|_A^2}x +y.$
  Clearly, $\langle x,z \rangle_A = -\frac{\langle x,y \rangle_A}{\|x\|_A^2}\langle x,x\rangle_A + \langle x,y \rangle_A = 0.$
  Therefore, $x \bot_{A} z.$
   Again,
   $$\|z-y\|_A = \frac{|-\overline{\langle x,y \rangle_A}|}{\|x\|_A^2}\|x\|_A
     \leq  \frac{\epsilon \|x\|_A^2 \|y\|_A}{\|x\|_A^2}
     =\epsilon \|y\|_A.$$

   $($ii$) \Rightarrow ($i$): $ Let $x \bot_{A} z$ be such that $\|z-y\|_A \leq \epsilon \|y\|_A.$
   Therefore, $$ |\langle x,y\rangle_A| =  |\langle x,y-z\rangle_A|
      \leq \|x\|_A \|y-z\|_A
      \leq \epsilon  \|x\|_A \|y\|_A.$$

   This completes the proof of the theorem.
\end{proof}
 Clearly, in Theorem \ref{characterization}, if we consider $A =I,$ we obtain the characterization of approximate $\epsilon-$orthogonality \cite{CSW} in a Hilbert space. Thus, Theorem \ref{characterization} generalizes the concept of approximate $\epsilon-$orthogonality in a Hilbert space.\\
 It is easy to see that approximate $\epsilon-$orthogonality and approximate Birkhoff-James orthogonality are same in a Hilbert space. Our next aim is to show the equivalence of $(\epsilon,A)-$approximate orthogonality and $(\epsilon,A)-$approximate orthogonality in the sense of Chmieli$\acute{n}$ski in Hilbert space setting. To do so we need the following easy proposition.
\begin{prop}\label{re inner product a x,y}
   Let $\mathbb{H}$ be a Hilbert space. Let $x \in S_{\mathbb{H}(A)}$ and $y \in \mathbb{H}.$ Then $$\lim_{\lambda \in \mathbb{R}, \lambda \to 0} \frac{\|x+\lambda y\|_A -1}{\lambda}= Re~\langle x,y\rangle_A.$$
\end{prop}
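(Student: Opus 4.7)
The plan is to reduce the problem to the elementary expansion of $\|x+\lambda y\|_A^2$ coming from the sesquilinearity of $\langle\,\cdot\,,\,\cdot\,\rangle_A$, and then remove the square root via a conjugate-multiplication trick.

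First I would expand, for real $\lambda$,
\[
\|x+\lambda y\|_A^2 = \langle x+\lambda y,\, x+\lambda y\rangle_A = \|x\|_A^2 + \lambda\,\langle y,x\rangle_A + \lambda\,\langle x,y\rangle_A + \lambda^2\|y\|_A^2,
\]
using that $A$ is self-adjoint so $\langle y,x\rangle_A = \overline{\langle x,y\rangle_A}$. Since $x\in S_{\mathbb{H}(A)}$ means $\|x\|_A=1$, this simplifies to
\[
\|x+\lambda y\|_A^2 - 1 = 2\lambda\,\operatorname{Re}\langle x,y\rangle_A + \lambda^2\|y\|_A^2.
\]

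Next, for $\lambda$ sufficiently small the quantity $\|x+\lambda y\|_A$ stays close to $1$ (by the seminorm triangle inequality, $\bigl|\|x+\lambda y\|_A - 1\bigr|\leq |\lambda|\,\|y\|_A$), so $\|x+\lambda y\|_A + 1 > 0$. This lets me apply the standard conjugate trick,
\[
\frac{\|x+\lambda y\|_A - 1}{\lambda} = \frac{\|x+\lambda y\|_A^2 - 1}{\lambda\bigl(\|x+\lambda y\|_A + 1\bigr)} = \frac{2\,\operatorname{Re}\langle x,y\rangle_A + \lambda\,\|y\|_A^2}{\|x+\lambda y\|_A + 1}.
\]
Letting $\lambda\to 0$ in $\mathbb{R}$, the numerator tends to $2\,\operatorname{Re}\langle x,y\rangle_A$ and the denominator tends to $2$, yielding the claimed limit.

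There is essentially no obstacle; the only mild point to check is that the denominator stays bounded away from zero near $\lambda = 0$, which follows from the above triangle-inequality estimate for the seminorm $\|\cdot\|_A$. Everything else is a direct computation using sesquilinearity, so the whole proof should be short.
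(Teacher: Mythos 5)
Your proposal is correct and follows exactly the paper's argument: expand $\|x+\lambda y\|_A^2$ via sesquilinearity and apply the conjugate-multiplication trick to pass from the squared seminorm to the seminorm itself. The only difference is that you explicitly verify the denominator stays bounded away from zero, a detail the paper leaves implicit.
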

\begin{proof}
  Clearly,
\begin{eqnarray*}
     \lim_{\lambda \in \mathbb{R}, \lambda \to 0} \frac{\|x+\lambda y\|_A -1}{\lambda} &=& \lim_{\lambda \in \mathbb{R}, \lambda \to 0} \frac{\|x+\lambda y\|^2_A -1}{\lambda(\|x+\lambda y\|_A +1)}\\
     &=& \lim_{\lambda \in \mathbb{R}, \lambda \to 0} \frac{\lambda(2 Re \langle x,y\rangle_A + \lambda \|y\|_A^2)}{\lambda (\|x+\lambda y\|_A +1)}\\
     &=& Re\langle x,y\rangle_A.
  \end{eqnarray*}
\end{proof}
In our next theorem we establish the equivalence of $\bot_{A}^{\epsilon}$ and $\bot_{\epsilon(A)}$ in the semi-Hilbertian structure, which generalizes \cite[Prop.2.1]{C} in the setting of Hilbert space.
\begin{theorem}\label{equality}
 Let $\mathbb{H}$ be a Hilbert space and $\epsilon \in [0,1).$ Let $x,y \in \mathbb{H}.$ Then $x\bot_{A}^{\epsilon}y \Leftrightarrow  x\bot_{\epsilon(A)}y.$
\end{theorem}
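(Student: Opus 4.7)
The plan is to treat the two implications separately, both resting on the sesquilinear expansion
\[
\|x+\lambda y\|_A^{2}=\|x\|_A^{2}+2\,\mathrm{Re}\bigl(\overline{\lambda}\langle x,y\rangle_A\bigr)+|\lambda|^{2}\|y\|_A^{2},
\]
valid for every $\lambda\in\mathbb{K}$.

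For the forward direction $(i)\Rightarrow(ii)$, I would start from the expansion above, estimate the cross term by $|2\,\mathrm{Re}(\overline{\lambda}\langle x,y\rangle_A)|\leq 2|\lambda|\,|\langle x,y\rangle_A|$, plug in the standing hypothesis $|\langle x,y\rangle_A|\leq\epsilon\|x\|_A\|y\|_A$, and then discard the nonnegative term $|\lambda|^{2}\|y\|_A^{2}$. This immediately produces the required Chmieli\'nski inequality for every $\lambda$.

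For the converse $(ii)\Rightarrow(i)$, the degenerate case $\|x\|_A=0$ (equivalently $x\in N(A)$) is dispatched by Cauchy--Schwarz for $\langle\cdot,\cdot\rangle_A$, which forces $\langle x,y\rangle_A=0$; the case $\|y\|_A=0$ is symmetric. In the nondegenerate case (and assuming $\langle x,y\rangle_A\neq 0$, since the other subcase is trivial), I invoke the homogeneity of $\bot_{\epsilon(A)}$ established in Proposition~\ref{homogeneous} to rotate $y$ by the unimodular scalar $\alpha=\langle x,y\rangle_A/|\langle x,y\rangle_A|$, so that $\langle x,\alpha y\rangle_A=|\langle x,y\rangle_A|$ is real and nonnegative while $\|\alpha y\|_A=\|y\|_A$. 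Feeding $\lambda=-t$ with $t>0$ into the Chmieli\'nski inequality for the pair $(x,\alpha y)$ and expanding via the identity above gives
\[
-2t\,|\langle x,y\rangle_A|+t^{2}\|y\|_A^{2}\ \geq\ -2\epsilon t\,\|x\|_A\|y\|_A.
\]
Dividing by $t$ and letting $t\to 0^{+}$ produces exactly $|\langle x,y\rangle_A|\leq\epsilon\|x\|_A\|y\|_A$, which is $(i)$.

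The only delicate moment is the phase selection (for $\lambda$, or equivalently for $y$) in the reverse direction, since this is where the complex case truly differs from the real one; the subsequent limit is routine and could, if desired, be replaced by a direct appeal to Proposition~\ref{re inner product a x,y} applied to $x/\|x\|_A$ after the phase has been fixed.
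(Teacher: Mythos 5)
Your proof is correct, and the forward implication is identical to the paper's. The converse, however, takes a genuinely different and cleaner route. The paper normalizes to $\|x\|_A=\|y\|_A=1$, works with the norm rather than its square via the estimate $\mathrm{Re}\langle x,x+\lambda y\rangle_A\leq\|x+\lambda y\|_A$, runs a sequence $\lambda_n=\lambda_0/n$ through the resulting inequality, and invokes Proposition~\ref{re inner product a x,y} on the derivative of $\lambda\mapsto\|x+\lambda y\|_A$ to extract $|\mathrm{Re}\,\langle\lambda_0 y,x\rangle_A|\leq\epsilon|\lambda_0|$; only at the very end does it fix the phase by choosing $\lambda_0=\overline{\langle y,x\rangle_A}$. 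You instead fix the phase at the outset --- replacing $y$ by $\alpha y$ with $\alpha=\langle x,y\rangle_A/|\langle x,y\rangle_A|$ so that the cross term becomes $-2t|\langle x,y\rangle_A|$ --- and then the squared-norm expansion turns the Chmieli\'nski inequality directly into
\[
-2t\,|\langle x,y\rangle_A|+t^{2}\|y\|_A^{2}\ \geq\ -2\epsilon t\,\|x\|_A\|y\|_A,
\]
whose limit as $t\to 0^{+}$ is a one-line computation requiring no auxiliary differentiability result. (In fact the appeal to homogeneity is dispensable too: one may simply substitute $\lambda=-t\alpha$ into the original inequality for the pair $(x,y)$.) What your approach buys is the elimination of Proposition~\ref{re inner product a x,y} and of the square-root manipulation; what the paper's approach buys is a template that survives in settings where one only controls the norm, not an underlying sesquilinear form --- irrelevant here, since $\|\cdot\|_A$ does come from $\langle\cdot,\cdot\rangle_A$. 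Your handling of the degenerate cases by Cauchy--Schwarz for the semi-inner product is also fine and matches the paper's "trivial" dismissal.
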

\begin{proof}
  Let $x,y \in \mathbb{H}$ be such that $x\bot_{A}^{\epsilon}y.$ Therefore,
  \begin{eqnarray*}
    \|x+\lambda y\|_A^2  &=& \|x\|_A^2 + 2 Re (\overline{\lambda} \langle x,y\rangle_A) + |\lambda|^2 \|y\|_A^2\\
     &\geq& \|x\|_A^2 - 2  |\overline{\lambda} \langle x,y\rangle_A|  \\
   &\geq& \|x\|_A^2 -2 \epsilon \|x\|_A \|\lambda y\|_A.
  \end{eqnarray*}
  Hence $x\bot_{\epsilon(A)}y.$ Thus,  $\bot_{A}^{\epsilon} \subseteq \bot_{\epsilon(A)}$.\\
  Conversely, let $x,y \in \mathbb{H}$ be such that $x\bot_{\epsilon(A)}y.$ If either $\|x\|_A$ or $\|y\|_A =0$, the result holds trivially. So, we assume $\|x\|_A,\|y\|_A \neq 0.$ Since $\bot_{\epsilon(A)}$ relation is homogeneous, without loss of generality we may assume that $\|x\|_A=\|y\|_A=1.$ Therefore, $\|x+\lambda y\|_A^2-1 +2 \epsilon |\lambda| \geq 0$ for all $\lambda \in \mathbb{K}.$ Hence, $\langle x,x+\lambda y\rangle_A + \langle \lambda y,x+\lambda y\rangle_A$ is real and so we have $Re\langle x,x+\lambda y\rangle_A + Re\langle \lambda y,x+\lambda y\rangle_A -1 +2 \epsilon |\lambda| \geq 0 $ for all $\lambda \in \mathbb{K}.$ It is easy to see that $Re\langle x,x+\lambda y\rangle_A \leq |\langle x,x+\lambda y\rangle_A| \leq \|x+\lambda y\|_A.$ Thus, we have, $$\|x+\lambda y\|_A +Re \langle \lambda y, x+\lambda y\rangle_A -1 + 2\epsilon |\lambda| \geq 0.$$ Let $\lambda_0 \in \mathbb{K}\setminus \{0\}$ and $n \in \mathbb{N}$ and $\lambda_n = \frac{\lambda_0}{n}.$ Clearly, $$\|x+\lambda_n y\|_A +Re \langle \lambda_n y, x+\lambda_n y\rangle_A -1 + 2\epsilon |\lambda_n| \geq 0.$$ Hence, $$\|x+ \frac{\lambda_0}{n} y\|_A +Re \langle \frac{\lambda_0}{n} y, x+\frac{\lambda_0}{n} y\rangle_A -1 \geq - 2\epsilon |\frac{\lambda_0}{n}|.$$ So, $$Re \langle \frac{\lambda_0}{|\lambda_0|} y, x+\frac{\lambda_0}{n} y\rangle_A + \frac{\|x+ \frac{\lambda_0}{n} y\|_A -1}{\frac{|\lambda_0|}{n}} \geq - 2\epsilon .$$ Let $\frac{\lambda_0}{|\lambda_0|} y = y^{'}$ and $\frac{|\lambda_0|}{n}=\xi_{n}.$ It is easy to see that $\xi_{n} \longrightarrow 0$ as $n \longrightarrow \infty.$ Thus, $$Re \langle y^{'} , x+ \xi_{n} y^{'}\rangle_A + \frac{\|x+ \xi_{n} y^{'} \|_A -1}{\xi_{n}} \geq - 2\epsilon .$$ Therefore, by taking limit and applying Proposition \ref{re inner product a x,y}, we get $2Re \langle y^{'} , x\rangle_A \geq - 2\epsilon,$ i.e., $Re \langle \lambda_{0}y , x\rangle_A \geq - \epsilon |\lambda_{0}|.$ Similarly by using $-\lambda_{0}$ instead of $\lambda_{0}$, we get $Re \langle \lambda_{0}y , x\rangle_A \leq  \epsilon |\lambda_{0}|.$ Therefore, $|Re \langle \lambda_{0}y , x\rangle_A| \leq  \epsilon |\lambda_{0}|.$ Choosing $\lambda_{0}=\overline{\langle y , x\rangle_A},$ we obtain $ |\langle y , x\rangle_A| \leq  \epsilon.$ Hence, by Proposition \ref{commutative}, we have $x~\bot_{A}^{\epsilon}~y.$ This completes the proof of the theorem.
\end{proof}
Next we characterize the $(\epsilon,A)-$approximate orthogonality in the sense of Chmieli$\acute{n}$ski for $A$-bounded operators in a complex Hilbert space.
\begin{theorem}\label{bounded}
 Let $\mathbb{H}$ be a complex Hilbert space  and  $\epsilon \in [0,1).$  Let $T,S \in B_{A^{1/2}}(\mathbb{H}).$  Then the following conditions are equivalent:
  \begin{itemize}
    \item [(i)] $T \bot_{\epsilon(A)} S$
    \item [(ii)] for each $\theta \in [0,\pi),$ there exist sequences $\{x_{n(\theta)}\}, \{y_{n(\theta)}\}\subseteq S_{\mathbb{H}(A)}$ such that
    \begin{itemize}
      \item [(a)] $\lim_{n \to \infty} \|Tx_{n(\theta)}\|_A = \lim_{n \to \infty} \|Ty_{n(\theta)}\|_A = \|T\|_A.$
      \item [(b)] $\lim_{n \to \infty}Re(e^{-i\theta} \langle Tx_{n(\theta)},Sx_{n(\theta)}\rangle_A) \geq -\epsilon \|T\|_A \|S\|_A.$
      \item [(c)] $\lim_{n \to \infty}Re(e^{-i\theta} \langle Ty_{n(\theta)},Sy_{n(\theta)}\rangle_A) \leq \epsilon \|T\|_A \|S\|_A.$
    \end{itemize}
  \end{itemize}
\end{theorem}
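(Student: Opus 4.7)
The proof naturally splits into the two implications, with the easy direction (ii)$\Rightarrow$(i) amounting to a direct estimate, and the harder direction (i)$\Rightarrow$(ii) requiring a construction of near-maximizers of $\|T+\lambda S\|_A$ along appropriate rays.

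For (ii)$\Rightarrow$(i), I would fix an arbitrary $\lambda\in\mathbb{C}$ and consider two cases depending on whether $\arg\lambda\in[0,\pi)$ or $\arg\lambda\in[\pi,2\pi)$. In the first case, write $\lambda=|\lambda|e^{i\theta}$ with $\theta\in[0,\pi)$ and evaluate on $x_{n(\theta)}$:
\[
\|T+\lambda S\|_A^2\;\geq\;\|(T+\lambda S)x_{n(\theta)}\|_A^2\;=\;\|Tx_{n(\theta)}\|_A^2+2|\lambda|\,Re\bigl(e^{-i\theta}\langle Tx_{n(\theta)},Sx_{n(\theta)}\rangle_A\bigr)+|\lambda|^2\|Sx_{n(\theta)}\|_A^2.
\]
Passing to the limit using (a) and (b), discarding the non-negative last term, yields the required inequality $\|T+\lambda S\|_A^2\geq\|T\|_A^2-2\epsilon\|T\|_A\|\lambda S\|_A$. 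The second case reduces to $\lambda=-|\lambda|e^{i\theta}$ with $\theta\in[0,\pi)$ and is handled identically using $y_{n(\theta)}$ together with (c), the sign flip being absorbed by the reversed inequality there.

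For (i)$\Rightarrow$(ii), I would fix $\theta\in[0,\pi)$ and use the definition of the $A$-operator norm to extract near-maximizers along the two rays $\lambda=\pm t e^{i\theta}$, $t>0$. For the first sequence, for each $n\in\mathbb{N}$ set $t_n=1/n$ and pick $x_{n(\theta)}\in S_{\mathbb{H}(A)}$ with
\[
\|(T+t_n e^{i\theta}S)x_{n(\theta)}\|_A^2\;\geq\;\|T+t_n e^{i\theta}S\|_A^2-t_n^2\;\geq\;\|T\|_A^2-2\epsilon t_n\|T\|_A\|S\|_A-t_n^2,
\]
where the last inequality is the hypothesis $T\bot_{\epsilon(A)}S$. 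Expanding the left side gives
\[
\|Tx_{n(\theta)}\|_A^2+2t_n Re\bigl(e^{-i\theta}\langle Tx_{n(\theta)},Sx_{n(\theta)}\rangle_A\bigr)+t_n^2\|Sx_{n(\theta)}\|_A^2\;\geq\;\|T\|_A^2-2\epsilon t_n\|T\|_A\|S\|_A-t_n^2.
\]
Using $|Re(e^{-i\theta}\langle Tx,Sx\rangle_A)|\leq\|T\|_A\|S\|_A$ and $\|Sx_{n(\theta)}\|_A\leq\|S\|_A$, rearrangement yields $\|Tx_{n(\theta)}\|_A^2\to\|T\|_A^2$, giving (a). Dividing through by $2t_n$ and isolating the real-part term gives
\[
Re\bigl(e^{-i\theta}\langle Tx_{n(\theta)},Sx_{n(\theta)}\rangle_A\bigr)\;\geq\;-\epsilon\|T\|_A\|S\|_A-\tfrac{t_n}{2}(1+\|S\|_A^2),
\]
so the $\liminf$ satisfies (b). Passing to a subsequence (the real parts are bounded by $\|T\|_A\|S\|_A$ via Cauchy-Schwarz for $\langle\cdot,\cdot\rangle_A$) we may upgrade $\liminf$ to $\lim$. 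The sequence $y_{n(\theta)}$ is constructed identically with $\lambda=-t_n e^{i\theta}$; the sign of the cross term flips, producing the upper bound in (c).

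The main subtlety, and the only nontrivial point, is verifying $\lim\|Tx_{n(\theta)}\|_A=\|T\|_A$ from the near-maximizer inequality: one must absorb the mixed and quadratic $t_n$-terms using the a priori bounds $\|Sx_{n(\theta)}\|_A\leq\|S\|_A$ and Cauchy-Schwarz for $\langle\cdot,\cdot\rangle_A$, which is legitimate because $S\in B_{A^{1/2}}(\mathbb{H})$. Everything else is bookkeeping in the expansion of $\|(T+\lambda S)z\|_A^2=\|Tz\|_A^2+2Re(\bar\lambda\langle Tz,Sz\rangle_A)+|\lambda|^2\|Sz\|_A^2$, together with a routine passage to a subsequence to replace limits superior/inferior by genuine limits.
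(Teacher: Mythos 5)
Your proposal is correct and follows essentially the same route as the paper: for (i)$\Rightarrow$(ii) one picks, for each $\theta$, near-maximizers of $\|(T+\tfrac{e^{i\theta}}{n}S)(\cdot)\|_A$ within $1/n^2$, expands $\|(T+\lambda S)z\|_A^2$, uses $\|Tz\|_A\leq\|T\|_A$ and the Cauchy--Schwarz inequality for $\langle\cdot,\cdot\rangle_A$ to force $\|Tx_{n(\theta)}\|_A\to\|T\|_A$ and the lower bound on the real part, with the second ray $-e^{i\theta}/n$ yielding $y_{n(\theta)}$; and (ii)$\Rightarrow$(i) is the same direct evaluation on the two sequences according to the sign of $\arg\lambda$. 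No gaps; the subsequence passage and the absorption of the $O(1/n)$ terms are handled exactly as in the paper.
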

\begin{proof}
  Suppose (i) holds.
  Therefore, $\|T+\lambda S\|_A^2 \geq \|T\|_A^2 -2 \epsilon \|T\|_A \|\lambda S\|_A$ for all $\lambda \in \mathbb{K}.$
   First we choose a sequence $\{\lambda_n\} \subseteq \mathbb{K}$, where $\lambda_n=\frac{e^{i\theta}}{n},$ where $\theta \in [0,\pi)$ and $n \in \mathbb{N}.$
   Clearly, for each $n \in \mathbb{N},$ there exists an element $x_{n(\theta)} \in S_{\mathbb{H}(A)}$ such that
   \begin{eqnarray*}
     \|(T+\frac{e^{i\theta}}{n}S)x_{n(\theta)}\|_A^2 &>& \|T\|_A^2 - 2 \frac{\epsilon}{n}\|T\|_A \|S\|_A - \frac{1}{n^2}.
   \end{eqnarray*}
    Hence,
    $$\|Tx_{n(\theta)}\|_A^2 + \frac{2}{n}Re(e^{-i\theta}\langle Tx_{n(\theta)},Sx_{n(\theta)}\rangle_A) + \frac{1}{n^2}\|Sx_{n(\theta)}\|_A^2 > \|T\|_A^2 - 2 \frac{\epsilon}{n}\|T\|_A \|S\|_A - \frac{1}{n^2}.$$
   Thus,
    $$\|Tx_{n(\theta)}\|_A^2 > \|T\|_A^2 -\frac{2}{n}Re(e^{-i\theta}\langle Tx_{n(\theta)},Sx_{n(\theta)}\rangle_A) - \frac{1}{n^2}\|S\|_A^2 - 2 \frac{\epsilon}{n}\|T\|_A \|S\|_A - \frac{1}{n^2}.$$

      It is easy to see that $\{\|Tx_{n(\theta)}\|_A\},\{\langle Tx_{n(\theta)},Sx_{n(\theta)}\rangle_A\}$ are  bounded sequences.
       So, passing onto a subsequence if necessary, it follows that $\lim_{n \to \infty} \|Tx_{n(\theta)}\|_A^2 \\\geq \|T\|_A^2.$
       Again by the fact $x_{n(\theta)} \in S_{\mathbb{H}(A)},$ clearly,  $\|Tx_{n(\theta)}\|_A^2 \leq \|T\|_A^2$ for all $n \in \mathbb{N}.$
       Therefore, $\lim_{n \to \infty} \|Tx_{n(\theta)}\|_A = \|T\|_A.$
       Thus,  $$ \frac{2}{n}Re(e^{-i\theta}\langle Tx_{n(\theta)},Sx_{n(\theta)}\rangle_A)> (\|T\|_A^2 -\|Tx_{n(\theta)}\|_A^2) - 2 \frac{\epsilon}{n}\|T\|_A \|S\|_A -  \frac{1}{n^2}\|S\|_A^2 - \frac{1}{n^2}.$$
       So, $$ Re(e^{-i\theta}\langle Tx_{n(\theta)},Sx_{n(\theta)}\rangle_A)> \frac{n}{2}(\|T\|_A^2 -\|Tx_{n(\theta)}\|_A^2) -  \epsilon\|T\|_A \|S\|_A -  \frac{1}{2n}\|S\|_A^2 - \frac{1}{2n}.$$
       Therefore,$$ Re(e^{-i\theta}\langle Tx_{n(\theta)},Sx_{n(\theta)}\rangle_A)> -  \epsilon\|T\|_A \|S\|_A -  \frac{1}{2n}\|S\|_A^2 - \frac{1}{2n}.$$ Hence, by passing onto a subsequence if necessary and by taking limit, we obtain
      \begin{eqnarray*}
       \lim_{n \to \infty} Re(e^{-i\theta}\langle Tx_{n(\theta)},Sx_{n(\theta)}\rangle_A) &\geq& - \epsilon\|T\|_A \|S\|_A.
      \end{eqnarray*}
      Similarly, for $\lambda_n=\frac{-e^{i\theta}}{n},$ we can find a sequence $\{y_{n(\theta)}\} \subseteq S_{\mathbb{H}(A)},$ such that $$\lim_{n \to \infty} \|Ty_{n(\theta)}\|_A = \|T\|_A$$ and
      \begin{eqnarray*}
       \lim_{n \to \infty} Re(e^{-i\theta}\langle Ty_{n(\theta)},Sy_{n(\theta)}\rangle_A) &\leq&  \epsilon\|T\|_A \|S\|_A.
      \end{eqnarray*}
      This completes the proof of  (i) $\Rightarrow $ (ii).

     Next suppose (ii) holds. First, we may assume that $\lambda = |\lambda|e^{i\theta} \in \mathbb{K}$ where $\theta \in [0,\pi).$ Therefore,
      \begin{eqnarray*}
        \|T+\lambda S\|_A^2 &\geq& \lim_{n \to \infty}\|(T+|\lambda|e^{i\theta}S)x_{n(\theta)}\|_A^2 \\
         &=& \lim_{n \to \infty} (\|Tx_{n(\theta)}\|_A^2 + 2 |\lambda| Re(e^{-i\theta}\langle Tx_{n(\theta)},Sx_{n(\theta)}\rangle_A) + |\lambda|^2 \|Sx_{n(\theta)}\|_A^2)\\
         &\geq&\lim_{n \to \infty}(\|Tx_{n(\theta)}\|_A^2 + 2 |\lambda| Re(e^{-i\theta}\langle Tx_{n(\theta)},Sx_{n(\theta)}\rangle_A) \\
          &\geq& \|T\|_A^2 - 2 \epsilon\|T\|_A \|\lambda S\|_A.
        \end{eqnarray*}
        Similarly, for  $\lambda = -|\lambda|e^{i\theta} \in \mathbb{K}$ where $\theta \in [0,\pi),$ we can show that $\|T+\lambda S\|_A^2 \geq \|T\|_A^2 - 2 \epsilon\|T\|_A \|\lambda S\|_A.$
      Thus $T \bot_{\epsilon(A)} S$  and so (ii) $ \Rightarrow $ (i). This completes the proof of the theorem.
\end{proof}
The characterization of $(\epsilon,A)$-approximate orthogonality in the sense of Chmieli$\acute{n}$ski for $A-$bounded operators in a real Hilbert space admits a nicer form due to the fact that in real normed space $\mathcal{N}$ and for $x,y \in \mathcal{N},$ $x \bot^{\epsilon}_{B} y$ if and only if there exists $z \in \langle\{x,y\}\rangle$ such that $x \bot_{B} z$ and $\|z-y\| \leq \epsilon \|y\|$ (Theorem 2.2 of \cite{CSW}). In order to characterize $(\epsilon,A)$-approximate orthogonality in the sense of Chmieli$\acute{n}$ski for $A-$bounded operators in a real Hilbert space first we prove the following proposition. 
\begin{prop}\label{helpful}
	Let $\mathbb{H}$ be a real Hilbert space. Let $P$ be the orthogonal projection on $\overline{R(A)}.$ Let $A_0 = A\mid_{\overline{R(A)}}.$ Let $C \in  B_{A^{1/2}}(\mathbb{H})$ and $\tilde{C}= PC\mid_{\overline{R(A)}}.$  Then,
	\begin{itemize}
		\item [($i$)] $\|C\|_A = \|\tilde{C}\|_{A_0}$ for all $C \in  B_{A^{1/2}}(\mathbb{H}).$
		\item [($ii$)] \~{T} +\~{S} =$\widetilde{T+S}$ for all $T,S \in  B_{A^{1/2}}(\mathbb{H}).$
		\item [($iii$)] $\widetilde{\lambda C} = \lambda \tilde{C}$ for all $C \in  B_{A^{1/2}}(\mathbb{H})$ and $\lambda \in \mathbb{R}.$
		\item [($iv$)] $M_{A}^C \cap \overline{R(A)} = M_{A_0}^{\tilde{C}}$ for all $C \in  B_{A^{1/2}}(\mathbb{H}).$
		\item [($v$)] $S_{\mathbb{H}(A)} \cap \overline{R(A)} = S_{\overline{R(A)}(A_0)}.$
		\item [($vi$)] If $M_{A}^C \cap \overline{R(A)} =S_{\mathbb{H}(A)} \cap \overline{R(A)}$ for some $C \in  B_{A^{1/2}}(\mathbb{H}),$ then $M_{A}^C=S_{\mathbb{H}(A)}.$
	\end{itemize}
\end{prop}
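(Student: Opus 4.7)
The plan is to exploit the orthogonal decomposition $\mathbb{H}=N(A)\oplus\overline{R(A)}$ (valid because $A$ is self-adjoint, so $N(A)^{\perp}=\overline{R(A)}$), combined with one key observation: $A$-boundedness of $C$ forces $\|Cz\|_A=0$ whenever $z\in N(A)$. Once these two ingredients are in place, all six parts reduce to fairly routine verifications.

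First I would record two preliminary facts used throughout. \textbf{Fact 1:} For $y\in\mathbb{H}$ decomposed as $y=y_1+y_0$, $y_1=Py\in\overline{R(A)}$, $y_0\in N(A)$, we have $Ay_0=0$ and $\langle Ay_1,y_0\rangle=0$, hence $\|y\|_A=\|Py\|_A=\|Py\|_{A_0}$. \textbf{Fact 2:} If $C\in B_{A^{1/2}}(\mathbb{H})$ and $z\in N(A)$, then $\|Cz\|_A\le c\|z\|_A=0$; since $A$ is positive, an application of Cauchy--Schwarz on the form $\langle A\cdot,\cdot\rangle$ yields $ACz=0$, i.e., $Cz\in N(A)$. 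Combining them, for any $x=x_1+x_0\in\mathbb{H}$ we get $Cx_0\in N(A)$, and applying Fact~1 to $y=Cx$ gives
\[
\|Cx\|_A=\|P(Cx)\|_A=\|PCx_1\|_A=\|\tilde{C}x_1\|_A=\|\tilde{C}x_1\|_{A_0}.
\]

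With this displayed identity in hand, ($i$) follows by taking the supremum over $x\in S_{\mathbb{H}(A)}$: by Fact~1, $\|x\|_A=1$ is equivalent to $\|x_1\|_{A_0}=1$, and $x_1=Px$ ranges over the $A_0$-unit sphere of $\overline{R(A)}$ as $x$ ranges over $S_{\mathbb{H}(A)}$. Parts ($ii$) and ($iii$) are immediate from the linearity of $P$ applied pointwise to $Tx+Sx$ and $\lambda Cx$ for $x\in\overline{R(A)}$. Part ($v$) is immediate from Fact~1 restricted to $x\in\overline{R(A)}$, on which $\|x\|_A=\|x\|_{A_0}$. Part ($iv$) then combines ($i$) and ($v$) with the displayed identity specialised to $x\in\overline{R(A)}$, which gives $\|Cx\|_A=\|\tilde{C}x\|_{A_0}$ and $\|C\|_A=\|\tilde{C}\|_{A_0}$, so the two norm-attainment conditions are literally the same.

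The main content is ($vi$). Given $x\in S_{\mathbb{H}(A)}$, decompose $x=x_1+x_0$. By Fact~1, $\|x_1\|_A=\|x\|_A=1$, so $x_1\in S_{\mathbb{H}(A)}\cap\overline{R(A)}=M_A^C\cap\overline{R(A)}$ by hypothesis; hence $\|Cx_1\|_A=\|C\|_A$. Now the displayed identity (in particular the equality $\|Cx\|_A=\|Cx_1\|_A$ coming from Fact~2) yields $\|Cx\|_A=\|C\|_A$, i.e., $x\in M_A^C$. The reverse inclusion $M_A^C\subseteq S_{\mathbb{H}(A)}$ is built into the definition of $M_A^C$. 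The only real obstacle, modest as it is, lies in justifying $Cz\in N(A)$ for $z\in N(A)$ in Fact~2; once that is secured, parts ($i$)--($vi$) fall out mechanically from the single displayed computation.
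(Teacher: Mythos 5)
Your proof is correct and takes essentially the same route as the paper: both rest on the decomposition $\mathbb{H}=N(A)\oplus\overline{R(A)}$ together with the observation that $\|Cz\|_A=0$ for $z\in N(A)$, from which ($i$) follows by comparing suprema and ($vi$) by splitting a unit vector as $x=x_1+x_0$ (the paper argues ($vi$) by contradiction, you argue it directly, but the content is identical). Your Fact~2, that $Cz\in N(A)$, is a slightly stronger statement than the seminorm identity the paper invokes, but it is correct and changes nothing essential.
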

\begin{proof}
	Note that, if $z \in N(A)$ and $C \in  B_{A^{1/2}}(\mathbb{H}),$ clearly, $\|Cz\|_A = 0.$\\
	($i$) Let $C \in  B_{A^{1/2}}(\mathbb{H}).$ Clearly,
	\begin{eqnarray*}
		\|C\|_A &=& \sup\{\|Cx\|_A : \|x\|_A =1\}\\
		&=& \sup\{\|Cu+Cv\|_A : \|u+v\|_A =1, x = u+v,u \in N(A),v\in \overline{R(A)}\}\\
		&=& \sup\{\|Cv\|_A : \|v\|_A =1, v\in \overline{R(A)}\}\\
		&=& \sup\{\|PCv\|_A : \|v\|_A =1, v\in \overline{R(A)}\}\\
		&=& \sup\{\|\tilde{C}v\|_{A_0} : \|v\|_{A_0} =1, v\in \overline{R(A)}\}\\
		&=& \|\tilde{C}\|_{A_0}.
	\end{eqnarray*}
	We omit the proof of ($ii$),($iii$),($iv$),($v$); as they follow trivially.\\
	($vi$) Suppose on the contrary that  $M_{A}^C \neq S_{\mathbb{H}(A)}.$ Then, there exists $x \in S_{\mathbb{H}(A)}$ such that $x \notin M_{A}^C.$ As $\mathbb{H} = N(A) \oplus \overline{R(A)},$ $x$ can be uniquely written as $x = u+v,$ where $u \in N(A)$ and $v \in \overline{R(A)}.$ As $u \in N(A),$  $\|u\|_A =0$ and therefore, $\|x\|_A = \|v\|_A =1.$ Hence, $v \in S_{\mathbb{H}(A)} \cap \overline{R(A)} =M_{A}^C \cap \overline{R(A)}.$ So, $\|Cv\|_A =\|C\|_A.$ As $\|Cu\|_A =0, $ it follows that $\|Cv\|_A = \|Cx\|_A =\|C\|_A.$ Thus, $x \in M_{A}^C,$ a contradiction.
\end{proof}
\begin{remark}\label{t implies t tilda}
	It is clear from ($i$) of Proposition \ref{helpful} that for any  $T,S \in  B_{A^{1/2}}(\mathbb{H})$ and $\epsilon \in [0,1),$ $S \bot_{\epsilon(A)} T \Leftrightarrow \tilde{S} \bot_{\epsilon(A_0)} \tilde{T}$ and $S \bot_A^B T \Leftrightarrow \tilde{S} \bot_{A_0}^B \tilde{T}.$
\end{remark}
Now we are ready to characterize $(\epsilon,A)$-approximate orthogonality in the sense of Chmieli$\acute{n}$ski for $A-$bounded operators in  real Hilbert space.
\begin{theorem}\label{real A bounded}
	Let $\mathbb{H}$ be a real Hilbert space and $T,S \in B_{A^{1/2}}(\mathbb{H}).$ Let $\epsilon \in [0,1).$ Then the following conditions are equivalent:
 \begin{itemize}
	\item [($i$)] $T ~\bot_{\epsilon(A)}~ S$
	\item [($ii$)]  there exists a sequence $\{x_n\} \subseteq S_{\mathbb{H}(A)}$ such that
	\begin{itemize}
		\item [($a$)] $\lim_{n \to \infty} \|Tx_n\|_A = \|T\|_A.$
		\item [($b$)] $\lim_{n \to \infty} |\langle Tx_n,Sx_n \rangle_A| \leq \epsilon \|T\|_A \|S\|_A.$
	\end{itemize}
\end{itemize}
\end{theorem}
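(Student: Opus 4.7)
The proof splits into the two directions, with the substantial content residing in $(i) \Rightarrow (ii)$.

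For $(ii) \Rightarrow (i)$, which is the easy direction, given any sequence $\{x_n\}$ satisfying (a) and (b), for every $\lambda \in \mathbb{R}$ and each $n$ one has
\[
\|T + \lambda S\|_A^2 \geq \|(T + \lambda S)x_n\|_A^2 = \|Tx_n\|_A^2 + 2\lambda\langle Tx_n, Sx_n\rangle_A + \lambda^2\|Sx_n\|_A^2 \geq \|Tx_n\|_A^2 - 2|\lambda|\,|\langle Tx_n, Sx_n\rangle_A|.
\]
Passing to $n \to \infty$ and invoking (a) and (b) yields $\|T + \lambda S\|_A^2 \geq \|T\|_A^2 - 2\epsilon\|T\|_A\|\lambda S\|_A$, as desired.

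For $(i) \Rightarrow (ii)$, I would first reduce to the positive-definite case via Remark \ref{t implies t tilda} (which stems from Proposition \ref{helpful}), so that $\|\cdot\|_A$ becomes a genuine norm on $B_{A^{1/2}}(\mathbb{H})$. In this real normed operator space, the relation $T \bot_{\epsilon(A)} S$ is precisely approximate Birkhoff--James orthogonality in the sense of Definition \ref{approximate Birkhoff orthogonal}. Hence Theorem~2.2 of \cite{CSW}, applied here, produces an operator $S' \in \mathrm{span}\{T, S\}$ with $T \bot_A^B S'$ and $\|S' - S\|_A \leq \epsilon\|S\|_A$.

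Next, from $T \bot_A^B S'$ I would extract a sequence $\{x_n\} \subseteq S_{\mathbb{H}(A)}$ satisfying $\|Tx_n\|_A \to \|T\|_A$ and $\langle Tx_n, S'x_n\rangle_A \to 0$, which is a real-Hilbert-space refinement of the $\epsilon = 0$ case of Theorem \ref{bounded}: the two sequences of the complex characterization collapse to one in the real setting, thanks to the convexity of $\lambda \mapsto \|T + \lambda S'\|_A^2$ on $\mathbb{R}$ and the fact that $\lambda = 0$ is its minimizer, which forces the left and right directional derivatives to straddle zero. With this sequence in hand, the $A$-Cauchy--Schwarz inequality together with $\|S - S'\|_A \leq \epsilon\|S\|_A$ yields
\[
|\langle Tx_n, Sx_n\rangle_A| \leq |\langle Tx_n, S'x_n\rangle_A| + |\langle Tx_n, (S - S')x_n\rangle_A| \leq |\langle Tx_n, S'x_n\rangle_A| + \epsilon\|T\|_A\|S\|_A,
\]
and letting $n \to \infty$ establishes (b); condition (a) holds by construction.

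The principal obstacle is the single-sequence extraction for $T \bot_A^B S'$ in the real case, since the complex-case Theorem \ref{bounded} produces two sequences and these must be consolidated. I would resolve this by exploiting the convex-analytic structure of $\lambda \mapsto \|T + \lambda S'\|_A^2$ at its minimum together with, in the finite-dimensional case, the connectedness properties of the $A$-norm attainment set $M^T_A$, with an appropriate sequential approximation replacing this in the infinite-dimensional setting.
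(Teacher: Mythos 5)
Your proof is correct and follows essentially the same route as the paper: reduce to the positive-definite restrictions $\tilde{T},\tilde{S}$ on $\overline{R(A)}$, apply \cite[Th.2.2]{CSW} to obtain an exactly $A$-orthogonal perturbation of $S$, extract a single norming sequence for that perturbation, and conclude via the Cauchy--Schwarz inequality for $\langle\cdot,\cdot\rangle_{A}$. The only divergence is that the single-sequence characterization of exact $A$-Birkhoff--James orthogonality, which you identify as the principal obstacle and sketch via convexity and connectedness of $M^T_A$, is simply quoted by the paper as \cite[Th.2.2]{z}, so no new argument is needed there.
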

\begin{proof}
	As the sufficient part of the theorem follows trivially, we only prove the necessary part of the theorem.\\
As $A$ is positive definite on $\overline{R(A)},$ $\big(\overline{R(A)},\|\cdot\|_{A}\big)$ is a normed space. Let $A_0 = A\mid_{\overline{R(A)}}.$ Let $P$ be the orthogonal projection on $\overline{R(A)}.$ Let $\tilde{T}= PT\mid_{\overline{R(A)}}.$ Clearly, $\big(B_{A_{0}^{1/2}}(\overline{R(A)}),\|\cdot\|_{A_0}\big)$ is a normed space. As $T ~\bot_{\epsilon(A)}~ S,$ it is easy to see that $\tilde{T} ~\bot_{\epsilon(A_0)}~ \tilde{S}.$ Therefore, by \cite[Th.2.2]{CSW} , there exists $\tilde{U} \in B_{A_{0}^{1/2}}(\overline{R(A)})$ such that $\tilde{T} \bot_{A_0}^B \tilde{U}$ and $\|\tilde{S} - \tilde{U}\|_{A_0} \leq \epsilon \|\tilde{S}\|_{A_0}.$ As $\tilde{T} \bot_{A_0}^B \tilde{U}$, by \cite[Th.2.2]{z}, there exists a sequence of $A_0-$unit vectors $\{x_n\}$ in $\overline{R(A)}$ such that
$\lim_{n \to \infty}\|\tilde{T}x_n\|_{A_0} =\|\tilde{T}\|_{A_0}$ and $\lim_{n \to \infty}\langle \tilde{T}x_n,\tilde{U}x_n \rangle_{A_0} = 0.$
Thus
\begin{eqnarray*}
	|\langle \tilde{T}x_n,\tilde{S}x_n \rangle_{A_0}| &\leq& |\langle \tilde{T}x_n,\tilde{S}x_n - \tilde{U}x_n \rangle_{A_0}| + |\langle \tilde{T}x_n,\tilde{U}x_n \rangle_{A_0}|\\
	&\leq& \|\tilde{T}\|_{A_0}\|\tilde{S} - \tilde{U}\|_{A_0} +  |\langle \tilde{T}x_n,\tilde{U}x_n \rangle_{A_0}|
\end{eqnarray*}
Hence, by taking limit, we obtain $\lim_{n \to \infty} |\langle \tilde{T}x_n,\tilde{S}x_n \rangle_{A_0}| \leq \epsilon \|\tilde{T}\|_{A_0} \|\tilde{S}\|_{A_0}.$ As $A$ is positive, $N(A)=N(A^{1/2})$. Thus, we have, $ \langle \tilde{T}x_n,\tilde{S}x_n \rangle_{A_0}= \langle Tx_n,Sx_n \rangle_A$ and $\|Tx_n\|_A = \|\tilde{T}x_n\|_{A_0}.$ Also note that $\|T\|_A = \|\tilde{T}\|_{A_0}.$ Therefore, $\lim_{n \to \infty} |\langle Tx_n,Sx_n \rangle_A|\\
 \leq \epsilon \|T\|_A \|S\|_A.$
\end{proof}
\begin{remark}
In \cite[Th.3.2]{CSW}, the authors proved that if $T,S \in \mathbb{L(\mathbb{H})},$ where $\mathbb{H}$ is a real Hilbert space and $\epsilon \in [0,1),$ then $T \bot_{\epsilon}^B S$
if and only if there exists a sequence $\{x_n\} \subseteq S_{\mathbb{H}}$ such that
$\lim_{n \to \infty} \|Tx_n\| = \|T\|$ and
	 $\lim_{n \to \infty} |\langle Tx_n,Sx_n \rangle| \leq \epsilon \|T\| \|S\|.$ Thus Theorem \ref{real A bounded} generalizes  \cite[Th.3.2]{CSW}.
	
\end{remark}
Next we characterize the $(\epsilon,A)-$approximate orthogonality in the sense of Chmieli$\acute{n}$ski for $A$-bounded compact operators in complex Hilbert space setting.
\begin{theorem}\label{compact}
  Let $\mathbb{H}$ be a complex Hilbert space  and $\epsilon \in [0,1).$   Suppose $B_{\mathbb{H}(A)} \cap \overline{R(A)}$ is bounded with respect to $\|\cdot\|$. Let $T,S \in B_{A^{1/2}}(\mathbb{H}) \cap \mathbb{K(\mathbb{H})}$.  Then the following conditions are equivalent:
  \begin{itemize}
    \item [(i)] $T \bot_{\epsilon(A)} S$
    \item [(ii)] for each $\theta \in [0,\pi),$ there exist $x_{\theta},y_{\theta} \in S_{\mathbb{H}(A)}$ such that
    \begin{itemize}
      \item [(a)] $ \|Tx_{\theta}\|_A = \|Ty_{\theta}\|_A =\|T\|_A.$
      \item [(b)] $Re(e^{-i\theta} \langle Tx_{\theta},Sx_{\theta}\rangle_A) \geq -\epsilon \|T\|_A \|S\|_A.$
        \item [(c)] $Re(e^{-i\theta} \langle Ty_{\theta},Sy_{\theta}\rangle_A) \leq \epsilon \|T\|_A \|S\|_A.$
      \end{itemize}
      \end{itemize}
      \end{theorem}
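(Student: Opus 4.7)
The sufficiency direction $(ii) \Rightarrow (i)$ is immediate: given such $x_\theta, y_\theta$, one applies Theorem \ref{bounded} with the constant sequences $x_{n(\theta)} = x_\theta$ and $y_{n(\theta)} = y_\theta$. Thus the task reduces to proving $(i) \Rightarrow (ii)$. Starting from $T \bot_{\epsilon(A)} S$, Theorem \ref{bounded} already produces, for each $\theta \in [0,\pi)$, sequences $\{x_{n(\theta)}\}, \{y_{n(\theta)}\} \subseteq S_{\mathbb{H}(A)}$ realizing conditions (a), (b), (c) of that theorem only in the limit; my plan is to upgrade these sequential conditions to pointwise ones by extracting limits, exploiting the compactness of $T, S$ together with the standing hypothesis that $B_{\mathbb{H}(A)} \cap \overline{R(A)}$ is bounded in $\|\cdot\|$.

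To prepare the sequences for a compactness argument, I write $x_{n(\theta)} = u_{n(\theta)} + v_{n(\theta)}$ according to the orthogonal decomposition $\mathbb{H} = N(A) \oplus \overline{R(A)}$ and replace $x_{n(\theta)}$ by $v_{n(\theta)}$. Because any $u \in N(A)$ satisfies $\|Cu\|_A \leq c\|u\|_A = 0$ for $C \in B_{A^{1/2}}(\mathbb{H})$, a short Cauchy--Schwarz calculation in the semi-inner product $\langle\cdot,\cdot\rangle_A$ yields
\[
\|Tv_{n(\theta)}\|_A = \|Tx_{n(\theta)}\|_A, \qquad \langle Tv_{n(\theta)}, Sv_{n(\theta)}\rangle_A = \langle Tx_{n(\theta)}, Sx_{n(\theta)}\rangle_A,
\]
and $\|v_{n(\theta)}\|_A = 1$. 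By hypothesis $\{v_{n(\theta)}\}$ lies in $S_{\mathbb{H}(A)} \cap \overline{R(A)}$, which is $\|\cdot\|$-bounded, hence possesses a weakly convergent subsequence $v_{n_k(\theta)} \rightharpoonup x_\theta$ in $(\mathbb{H},\|\cdot\|)$. Compactness of $T$ and $S$ then forces $Tv_{n_k(\theta)} \to Tx_\theta$ and $Sv_{n_k(\theta)} \to Sx_\theta$ in $\|\cdot\|$, and therefore also in $\|\cdot\|_A$ via the universal bound $\|\cdot\|_A \leq \|A\|^{1/2}\|\cdot\|$.

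The main obstacle I anticipate is verifying that this weak limit $x_\theta$ lies on the $A$-unit sphere rather than merely in the $A$-unit ball. The $\|\cdot\|_A$-convergence combined with Theorem \ref{bounded}(a) yields $\|Tx_\theta\|_A = \lim_k\|Tv_{n_k(\theta)}\|_A = \|T\|_A$; in the non-trivial case $\|T\|_A > 0$, the defining inequality $\|Tx_\theta\|_A \leq \|T\|_A \|x_\theta\|_A$ forces $\|x_\theta\|_A \geq 1$, while weak lower semicontinuity of $\|\cdot\|_A$ (obtained by applying the usual weak lower semicontinuity of $\|\cdot\|$ to $A^{1/2}v_{n_k(\theta)} \rightharpoonup A^{1/2}x_\theta$) gives $\|x_\theta\|_A \leq 1$, so $x_\theta \in S_{\mathbb{H}(A)}$. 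Writing $\langle Tv_{n_k(\theta)}, Sv_{n_k(\theta)}\rangle_A = \langle ATv_{n_k(\theta)}, Sv_{n_k(\theta)}\rangle$ and using boundedness of $A$ together with the norm convergences above, one obtains $\langle Tv_{n_k(\theta)}, Sv_{n_k(\theta)}\rangle_A \to \langle Tx_\theta, Sx_\theta\rangle_A$, so passing to the limit in Theorem \ref{bounded}(b) yields condition (b) at $x_\theta$. An identical argument applied to $\{y_{n(\theta)}\}$ produces $y_\theta$ satisfying (a) and (c), completing the proof.
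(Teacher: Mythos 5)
Your proof is correct and follows essentially the same route as the paper's: for (i)$\Rightarrow$(ii) both arguments start from the sequences supplied by Theorem~\ref{bounded}, project onto $\overline{R(A)}$ along $N(A)$, extract a weak limit from the weakly compact set $B_{\mathbb{H}(A)}\cap\overline{R(A)}$, and use compactness of $T$ and $S$ to upgrade to norm convergence and pass to the limit. The only cosmetic differences are that you obtain (ii)$\Rightarrow$(i) by feeding constant sequences into Theorem~\ref{bounded} rather than repeating the direct expansion of $\|(T+\lambda S)x_{\theta}\|_A^2$, and that you spell out the weak lower semicontinuity argument giving $\|x_{\theta}\|_A\le 1$, which the paper leaves implicit.
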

      \begin{proof}
        (ii) $\Rightarrow$ (i):  First we may assume that $\lambda = |\lambda|e^{i\theta} \in \mathbb{K},$ where $\theta \in [0,\pi).$ Therefore,
      \begin{eqnarray*}
        \|T+\lambda S\|_A^2 &\geq& \|(T+|\lambda|e^{i\theta}S)x_{\theta}\|_A^2 \\
         &=& \|Tx_{\theta}\|_A^2 + 2 |\lambda| Re(e^{-i\theta}\langle Tx_{\theta},Sx_{\theta}\rangle_A) + |\lambda|^2 \|Sx_{\theta}\|_A^2\\
        &\geq&  \|T\|_A^2 - 2 \epsilon\|T\|_A \|\lambda S\|_A.
          \end{eqnarray*}
         Similarly, for $\lambda = -|\lambda|e^{i\theta} \in \mathbb{K}$ where $\theta \in [0,\pi),$ we can show that $\|T+\lambda S\|_A^2 \geq \|T\|_A^2 - 2 \epsilon\|T\|_A \|\lambda S\|_A.$ Hence, $T \bot_{\epsilon(A)} S.$\\
         (i) $\Rightarrow$ (ii): By Theorem \ref{bounded}, for each $\theta \in [0,\pi),$ there exist  sequences $\{x_{n(\theta)}\},\{y_{n(\theta)}\} \subseteq S_{\mathbb{H}(A)}$ such that
    \begin{itemize}
      \item [(a)] $\lim_{n \to \infty} \|Tx_{n(\theta)}\|_A = \lim_{n \to \infty} \|Ty_{n(\theta)}\|_A = \|T\|_A.$
      \item [(b)] $\lim_{n \to \infty}Re(e^{-i\theta} \langle Tx_{n(\theta)},Sx_{n(\theta)}\rangle_A) \geq -\epsilon \|T\|_A \|S\|_A.$
       \item [(c)] $\lim_{n \to \infty}Re(e^{-i\theta} \langle Ty_{n(\theta)},Sy_{n(\theta)}\rangle_A) \leq \epsilon \|T\|_A \|S\|_A.$
      \end{itemize}
      As $A \in \mathbb{L(\mathbb{H})}$ is positive, $\mathbb{H}=N(A)\oplus \overline{R(A)}.$ Therefore, each $x_{n(\theta)}$ can be uniquely written as $x_{n(\theta)}=u_{n(\theta)}+ v_{n(\theta)},$ where $u_{n(\theta)} \in N(A)$ and $v_{n(\theta)} \in \overline{R(A)}$. As $u_{n(\theta)} \in N(A),$ it is easy to see that $\|u_{n(\theta)}\|_A=0$ and therefore, $\|x_{n(\theta)}\|_A=\|v_{n(\theta)}\|_A=1$ for each $n \in \mathbb{N}.$ Thus, we conclude that $\{v_{n(\theta)}\} \subseteq S_{\mathbb{H}(A)} \cap \overline{R(A)}.$  Since $T,S \in B_{A^{1/2}}(\mathbb{H})$, it is easy to see that $\|Tu_{n(\theta)}\|_A = \|Su_{n(\theta)}\|_A = 0$ and therefore $Tu_{n(\theta)},Su_{n(\theta)} \in N(A)$  for all $n \in \mathbb{N}$. Hence, $\|Tx_{n(\theta)}\|_A = \|Tv_{n(\theta)}\|_A$ and $\|Sx_{n(\theta)}\|_A = \|Sv_{n(\theta)}\|_A$ for each $n \in \mathbb{N}.$ As $A$ is positive, $N(A)=N(A^{1/2})$ and so $A^{1/2}(Tu_{n(\theta)}) = A^{1/2}(Su_{n(\theta)}) =\theta.$ Clearly, $\lim_{n \to \infty} \|Tx_{n(\theta)}\|_A = \lim_{n \to \infty} \|Tv_{n(\theta)}\|_A = \|T\|_A.$ Also,
      \begin{eqnarray*}
      -\epsilon\|T\|_A \|S\|_A &\leq& \lim_{n \to \infty} Re(e^{-i\theta}\langle Tx_{n(\theta)},Sx_{n(\theta)}\rangle_A)\\
     &=& \lim_{n \to \infty}Re (e^{-i\theta}\langle A^{1/2}(Tu_{n(\theta)} + Tv_{n(\theta)}),A^{1/2}(Su_{n(\theta)} + Sv_{n(\theta)}) \rangle)  \\
      &=& \lim_{n \to \infty}Re(e^{-i\theta}\langle  A^{1/2}Tv_{n(\theta)}, A^{1/2}Sv_{n(\theta)} \rangle)\\
     &=& \lim_{n \to \infty}Re(e^{-i\theta}\langle Tv_{n(\theta)},Sv_{n(\theta)}\rangle_A)\\
       \end{eqnarray*}
  Since, $\mathbb{H}$ is reflexive and $B_{\mathbb{H}(A)} \cap \overline{R(A)}$ is closed, convex and bounded with respect to $\|\cdot\|,$  $B_{\mathbb{H}(A)} \cap \overline{R(A)}$ is weakly compact with respect to $\|\cdot\|.$  Thus, the sequence $\{v_{n(\theta)}\}$ has a weakly convergent subsequence. Without loss of generality we may assume that $v_{n(\theta)} \rightharpoonup x_{\theta}$ with respect to $\|\cdot\|$ in $\mathbb{H}$, for some $x_{\theta} \in B_{\mathbb{H}(A)} \cap \overline{R(A)}.$ Since, $T,S \in \mathbb{K}(\mathbb{H}),$ it follows that $Tv_{n(\theta)} \longrightarrow Tx_{\theta}$ and $Sv_{n(\theta)} \longrightarrow Sx_{\theta}$ with respect to $\|\cdot\|$ in $\mathbb{H}$. Therefore,
  $$\lim_{n \to \infty} \|Tx_{n(\theta)}\|_A = \lim_{n \to \infty} \|Tv_{n(\theta)}\|_A = \|Tx_{\theta}\|_A=\|T\|_A.$$ Thus, $x_{\theta} \in S_{\mathbb{H}(A)}.$ Also we have,
  \begin{eqnarray*}
       \lim_{n \to \infty} Re(e^{-i\theta}\langle Tx_{n(\theta)},Sx_{n(\theta)}\rangle_A) &=& \lim_{n \to \infty} Re(e^{-i\theta}\langle Tv_{n(\theta)},Sv_{n(\theta)}\rangle_A)\\
      &=& Re(e^{-i\theta}\langle Tx_{\theta},Sx_{\theta}\rangle_A)\\
       &\geq&  -\epsilon\|T\|_A \|S\|_A.
      \end{eqnarray*}
      Similarly, we can find $y_{\theta} \in S_{\mathbb{H}(A)}$ such that $\|Ty_{\theta}\|_A=\|T\|_A$ and $Re(e^{-i\theta}\langle Ty_{\theta},Sy_{\theta}\rangle_A)
       \leq  \epsilon\|T\|_A \|S\|_A.$
      \end{proof}
      The following corollary is an easy consequence of the above theorem.
      \begin{corollary}\label{compact single point}
  Let $\mathbb{H}$ be a complex Hilbert space and  $\epsilon \in [0,1).$  Suppose $B_{\mathbb{H}(A)} \cap \overline{R(A)}$ is bounded with respect to $\|\cdot\|$. Let $T,S \in B_{A^{1/2}}(\mathbb{H}) \cap \mathbb{K(\mathbb{H})}$ and $M_A^T = \{\pm e^{i\alpha} x, ~\alpha \in [0,\pi)\}$.  Then $T \bot_{\epsilon(A)} S$ if and only if $ \|Tx\|_A  =\|T\|_A\ $ and \newline $ |Re(e^{-i\theta} \langle Tx,Sx\rangle_A)| \leq \epsilon \|T\|_A \|S\|_A$ for each $\theta \in [0,\pi)$.
\end{corollary}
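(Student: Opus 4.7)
The plan is to obtain Corollary \ref{compact single point} as a direct specialization of Theorem \ref{compact} by exploiting the rigid structure of the norm attainment set $M_A^T$. Under the hypothesis, every element of $M_A^T$ is a unimodular scalar multiple of the fixed vector $x$; hence any $A$-unit vector attaining $\|T\|_A$ can be replaced, as far as the sesquilinear form $\langle T \cdot , S \cdot \rangle_A$ is concerned, by $x$ itself, thanks to the cancellation $\langle T(\mu x), S(\mu x)\rangle_A = |\mu|^2 \langle Tx, Sx\rangle_A = \langle Tx, Sx\rangle_A$ whenever $|\mu|=1$.

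For the forward direction, I would assume $T\bot_{\epsilon(A)} S$ and invoke (i)$\Rightarrow$(ii) of Theorem \ref{compact} to produce, for each $\theta \in [0,\pi)$, vectors $x_\theta, y_\theta \in S_{\mathbb{H}(A)}$ with $\|Tx_\theta\|_A = \|Ty_\theta\|_A = \|T\|_A$ and the two inequalities (b), (c). Since $x_\theta, y_\theta \in M_A^T$, the hypothesis on $M_A^T$ forces $x_\theta = \mu_\theta x$ and $y_\theta = \nu_\theta x$ with $|\mu_\theta|=|\nu_\theta|=1$. Applying the scalar cancellation above, conditions (b) and (c) collapse to
$$-\epsilon \|T\|_A\|S\|_A \;\leq\; \operatorname{Re}\!\bigl(e^{-i\theta}\langle Tx,Sx\rangle_A\bigr) \;\leq\; \epsilon \|T\|_A\|S\|_A,$$
which is exactly the desired bound $|\operatorname{Re}(e^{-i\theta}\langle Tx,Sx\rangle_A)| \leq \epsilon \|T\|_A \|S\|_A$. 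The equality $\|Tx\|_A=\|T\|_A$ is immediate from $x \in M_A^T$.

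For the reverse direction, I would simply take $x_\theta = y_\theta = x$ for every $\theta \in [0,\pi)$. The hypothesis $\|Tx\|_A = \|T\|_A$ gives condition (a) of Theorem \ref{compact}, while the two-sided bound on $\operatorname{Re}(e^{-i\theta}\langle Tx, Sx\rangle_A)$ supplies both (b) and (c). Then (ii)$\Rightarrow$(i) of Theorem \ref{compact} yields $T\bot_{\epsilon(A)} S$.

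There is no real obstacle in this argument once Theorem \ref{compact} is available; the entire content lies in recognizing that the single vector $x$ can serve as both $x_\theta$ and $y_\theta$ for every $\theta$, which is a consequence of the trivial unimodular invariance $\langle T(\mu x), S(\mu x)\rangle_A = \langle Tx,Sx\rangle_A$. The only point requiring any care is verifying this cancellation, and in particular noting that it holds regardless of the phase $\mu$, so that the two witness families $\{x_\theta\}$ and $\{y_\theta\}$ produced by Theorem \ref{compact} both reduce to the same vector $x$.
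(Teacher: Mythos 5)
Your proof is correct and is exactly the intended argument: the paper omits the proof, calling the corollary "an easy consequence" of Theorem \ref{compact}, and the content is precisely your observation that the witnesses $x_\theta, y_\theta$ supplied by that theorem must lie in $M_A^T$ and hence be unimodular multiples of $x$, so that $\langle Tx_\theta, Sx_\theta\rangle_A = \langle Tx, Sx\rangle_A$ and both families collapse to $x$; the converse is the trivial choice $x_\theta = y_\theta = x$. No gaps.
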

 The characterization of $(\epsilon,A)$-approximate orthogonality in the sense of Chmieli$\acute{n}$ski for $A-$bounded compact operators in a complex Hilbert space is obtained by two $A-$unit vectors (Theorem \ref{compact}). But in a real Hilbert space, $(\epsilon,A)$-approximate orthogonality in the sense of Chmieli$\acute{n}$ski for $A-$bounded compact operators can be characterized by a single $A-$unit vector, which will be discussed in our next theorem.
\begin{theorem}\label{real compact single point}
	Let $\mathbb{H}$ be a real Hilbert space be such that $B_{\mathbb{H}(A)} \cap \overline{R(A)}$ is bounded with respect to $\|\cdot\|$. Let $\epsilon \in [0,1).$ Let $T,S \in  B_{A^{1/2}}(\mathbb{H}) \cap \mathbb{K(\mathbb{H})}.$ Then  $T ~\bot_{\epsilon(A)}~ S$ if and only if there exists an $A-$unit vector $x$, such that $\|Tx\|_A =\|T\|_A$ and $|\langle Tx,Sx \rangle_A| \leq \epsilon \|T\|_A \|S\|_A.$
\end{theorem}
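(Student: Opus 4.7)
The plan is to handle the two directions separately; the sufficient direction is a one-line direct computation, while the necessary direction requires combining the sequential version (Theorem \ref{real A bounded}) with a weak-compactness argument that upgrades the sequence to a single attaining vector.

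For sufficiency, suppose $x$ with $\|x\|_A = 1$, $\|Tx\|_A = \|T\|_A$ and $|\langle Tx, Sx\rangle_A| \leq \epsilon \|T\|_A \|S\|_A$ exists. For any $\lambda \in \mathbb{R}$, expand $\|(T+\lambda S)x\|_A^2 = \|Tx\|_A^2 + 2\lambda \langle Tx, Sx\rangle_A + \lambda^2 \|Sx\|_A^2$, bound $\|T+\lambda S\|_A^2 \geq \|(T+\lambda S)x\|_A^2$, drop the nonnegative term $\lambda^2\|Sx\|_A^2$, and apply the hypothesis on $|\langle Tx, Sx\rangle_A|$ to recover the defining inequality of $T \bot_{\epsilon(A)} S$.

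For necessity, start by invoking Theorem \ref{real A bounded} to obtain a sequence $\{x_n\} \subseteq S_{\mathbb{H}(A)}$ with $\|Tx_n\|_A \to \|T\|_A$ and $\limsup_n |\langle Tx_n, Sx_n\rangle_A| \leq \epsilon \|T\|_A \|S\|_A$. I would then use the orthogonal decomposition $\mathbb{H} = N(A) \oplus \overline{R(A)}$ (valid since $A$ is positive, hence self-adjoint) to write $x_n = u_n + v_n$. Since $u_n \in N(A) = N(A^{1/2})$, we get $\|u_n\|_A = 0$, hence $\|v_n\|_A = 1$, and moreover $Tu_n, Su_n \in N(A)$ because $T,S$ are $A$-bounded; this is exactly the mechanism already exploited in the proof of Theorem \ref{compact}. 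Consequently $\|Tx_n\|_A = \|Tv_n\|_A$ and $\langle Tx_n, Sx_n\rangle_A = \langle Tv_n, Sv_n\rangle_A$, so we may replace $x_n$ by $v_n$ and assume $\{v_n\} \subseteq S_{\mathbb{H}(A)} \cap \overline{R(A)}$.

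The key step is now to extract a limit point. The set $B_{\mathbb{H}(A)} \cap \overline{R(A)}$ is convex, $\|\cdot\|$-closed and, by hypothesis, $\|\cdot\|$-bounded; reflexivity of $\mathbb{H}$ then makes it weakly compact. Passing to a subsequence, $v_n \rightharpoonup x$ with respect to $\|\cdot\|$ for some $x \in B_{\mathbb{H}(A)} \cap \overline{R(A)}$. Compactness of $T$ and $S$ upgrades weak convergence to norm convergence: $Tv_n \to Tx$ and $Sv_n \to Sx$ in $\|\cdot\|$. Since $A \in \mathbb{L}(\mathbb{H})$ is bounded, the map $w \mapsto \|w\|_A$ and the form $\langle \cdot, \cdot\rangle_A$ are $\|\cdot\|$-continuous, so $\|Tv_n\|_A \to \|Tx\|_A$ and $\langle Tv_n, Sv_n\rangle_A \to \langle Tx, Sx\rangle_A$. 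This yields $\|Tx\|_A = \|T\|_A$ and $|\langle Tx, Sx\rangle_A| \leq \epsilon \|T\|_A \|S\|_A$.

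The last thing to check—and the one subtle step—is that $x$ is actually an $A$-unit vector: a priori weak limits only give $\|x\|_A \leq 1$, so we need the attainment condition to force equality. But $\|T\|_A = \|Tx\|_A \leq \|T\|_A \|x\|_A$ forces $\|x\|_A \geq 1$, closing the argument. The main obstacle is nothing deep here, just making sure all the ingredients (boundedness in $\|\cdot\|$, reflexivity, compactness of $T$ and $S$, and the null-space absorbing step) are combined in the right order so that the weak-to-norm upgrade and the $A$-norm evaluation on the limit are both legitimate.
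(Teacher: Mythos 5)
Your proof is correct and follows essentially the same route as the paper: apply Theorem \ref{real A bounded} to get a norming sequence, reduce to $\overline{R(A)}$, use weak compactness of $B_{\mathbb{H}(A)} \cap \overline{R(A)}$ together with compactness of $T$ and $S$ to pass to a single vector $x$, and verify $\|Tx\|_A=\|T\|_A$ and the inner-product bound in the limit. The only cosmetic differences are that the paper's sequence from Theorem \ref{real A bounded} already lies in $\overline{R(A)}$ (so your $N(A)\oplus\overline{R(A)}$ reduction is redundant there, though harmless) and that you spell out the check $\|x\|_A=1$, which the paper leaves implicit.
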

\begin{proof}Since the sufficient part follows trivially, 
	we only prove the necessary part of the theorem. By Theorem \ref{real A bounded}, $T ~\bot_{\epsilon(A)}~ S$ if and only if
	there exists a sequence $\{x_n\} \subseteq S_{\mathbb{H}(A)} \cap \overline{R(A)} ~(\subseteq S_{\mathbb{H}(A)})$ such that
	$\lim_{n \to \infty} \|Tx_n\|_A = \|T\|_A$ and
	$\lim_{n \to \infty} |\langle Tx_n,Sx_n \rangle_A| \leq \epsilon \|T\|_A \|S\|_A.$
	As $B_{\mathbb{H}(A)} \cap \overline{R(A)}$ is  bounded with respect to $\|\cdot\|,$ clearly, $B_{\mathbb{H}(A)} \cap \overline{R(A)}$ is weakly compact with respect to $\|\cdot\|.$  Thus, the sequence $\{x_{n}\}$ has a weakly convergent subsequence. Without loss of generality we may assume that $x_{n} \rightharpoonup x$ with respect to $\|\cdot\|$ in $\mathbb{H}$, for some $x \in B_{\mathbb{H}(A)} \cap \overline{R(A)}.$ Since $T,S \in \mathbb{K}(\mathbb{H}),$ it follows that $Tx_{n} \longrightarrow Tx$ and $Sx_{n} \longrightarrow Sx$ with respect to $\|\cdot\|$ in $\mathbb{H}$. Therefore,
	$$\lim_{n \to \infty} \|Tx_{n}\|_A = \|Tx\|_A=\|T\|_A.$$ Thus, $x \in S_{\mathbb{H}(A)}.$ Also we have, $\lim_{n \to \infty} |\langle Tx_{n},Sx_{n}\rangle_A|
		= |\langle Tx,Sx\rangle_A|
		\leq  \epsilon\|T\|_A \|S\|_A.$ This completes the proof.
	\end{proof}
\begin{remark}
 Note that in Theorem \ref{real compact single point}, if we consider $\epsilon=0,$ Theorem 2.8 of \cite{ssp} is obtained. Moreover, if $\mathbb{H}$ is finite-dimensional, $A=I$ and $\epsilon=0,$ Bhatia-$\breve{S}$emrl Theorem \cite[Th.1.1]{BS} follows immediately.
\end{remark}
  In \cite{ssp}, the authors proved the following:
\begin{theorem}
	Let $\mathbb{H}$  be a Hilbert space and $\epsilon \in [0,1).$  Suppose  $B_{\mathbb{H}(A)} \cap \overline{R(A)}$ is bounded with respect to $\|\cdot\|.$  Let $T,S \in B_{A^{1/2}}(\mathbb{H}) \cap \mathbb{K}(\mathbb{H}).$ Then $T \bot_A^BS $ if and only if there exists $v \in M^T_A$ such that $Tv\bot_ASv.$
\end{theorem}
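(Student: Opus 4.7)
The plan is to prove the biconditional in two steps, with the nontrivial content concentrated in the forward direction, and to split that direction further into the real and complex cases.

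For the reverse implication, assume $v \in M_A^T$ satisfies $Tv \bot_A Sv$. For every $\gamma \in \mathbb{K}$ the cross term in $\|Tv+\gamma Sv\|_A^2$ vanishes, so
\[
\|T+\gamma S\|_A^2 \geq \|Tv\|_A^2 + |\gamma|^2\|Sv\|_A^2 \geq \|T\|_A^2,
\]
which is precisely $T \bot_A^B S$.

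For the forward implication in the real case, Theorem \ref{real compact single point} applied with $\epsilon = 0$ immediately yields an $A$-unit vector $v$ with $\|Tv\|_A = \|T\|_A$ (hence $v \in M_A^T$) and $|\langle Tv, Sv\rangle_A| \leq 0$, which forces $\langle Tv,Sv\rangle_A = 0$, finishing this case.

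The genuinely delicate case is the forward direction in the complex setting, since Theorem \ref{compact} with $\epsilon = 0$ only outputs, for each $\theta \in [0,\pi)$, separate witnesses $x_\theta, y_\theta \in M_A^T$ controlling the sign of $\mathrm{Re}(e^{-i\theta}\langle Tx,Sx\rangle_A)$, whereas we require a single $v$ annihilating both the real and imaginary parts of $\langle Tv,Sv\rangle_A$. My plan is to reduce the problem to the classical Bhatia-\v{S}emrl theorem for compact operators on a bona fide Hilbert space via Proposition \ref{helpful}. Set $A_0 = A\mid_{\overline{R(A)}}$, $\tilde T = PT\mid_{\overline{R(A)}}$, $\tilde S = PS\mid_{\overline{R(A)}}$. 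The hypothesis that $B_{\mathbb{H}(A)}\cap\overline{R(A)}$ is $\|\cdot\|$-bounded, combined with the automatic inequality $\|x\|_A \leq \sqrt{\|A\|}\,\|x\|$, forces $\|\cdot\|_{A_0}$ and $\|\cdot\|$ to be equivalent on $\overline{R(A)}$; since $\langle\cdot,\cdot\rangle_{A_0}$ is positive definite on $\overline{R(A)}$, this turns $(\overline{R(A)},\langle\cdot,\cdot\rangle_{A_0})$ into a complex Hilbert space on which $\tilde T,\tilde S$ are compact. Proposition \ref{helpful}(i),(iv) then gives $\|\tilde T\|_{A_0} = \|T\|_A$ and $M_{A_0}^{\tilde T} = M_A^T \cap \overline{R(A)}$, while Remark \ref{t implies t tilda} gives $T\bot_A^B S \Leftrightarrow \tilde T \bot_{A_0}^B \tilde S$. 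The Bhatia-\v{S}emrl theorem (together with its standard extension to compact operators on an infinite-dimensional Hilbert space) now produces $v \in M_{A_0}^{\tilde T}$ with $\langle \tilde T v,\tilde S v\rangle_{A_0} = 0$, and the identity $\langle Tv,Sv\rangle_A = \langle \tilde T v,\tilde S v\rangle_{A_0}$ for $v \in \overline{R(A)}$ (analogous to the one used at the end of the proof of Theorem \ref{compact}) converts this into the required $v \in M_A^T$ with $Tv \bot_A Sv$. The principal obstacle throughout is making this reduction legitimate, and it rests entirely on the norm-equivalence of $\|\cdot\|_A$ and $\|\cdot\|$ on $\overline{R(A)}$ granted by the standing boundedness hypothesis.
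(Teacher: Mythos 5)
The paper does not actually prove this statement---it is quoted verbatim from \cite{ssp} and used as a black box---so there is no internal proof to compare against; your proposal must therefore stand on its own, and it essentially does. The reverse implication and the real case are correct: specializing Theorem \ref{real compact single point} to $\epsilon=0$ gives exactly the required $v\in M_A^T$ with $\langle Tv,Sv\rangle_A=0$, which is precisely what the paper's own remark after that theorem observes (it recovers \cite[Th.~2.8]{ssp}). Your complex-case reduction is also sound in outline and is very much in the spirit of the techniques the paper uses in Theorems \ref{compact} and \ref{right symmetric characterization}: the boundedness of $B_{\mathbb{H}(A)}\cap\overline{R(A)}$ together with $\|x\|_A\le\|A^{1/2}\|\,\|x\|$ does make $\|\cdot\|_{A_0}$ and $\|\cdot\|$ equivalent on the closed subspace $\overline{R(A)}$, so $(\overline{R(A)},\langle\cdot,\cdot\rangle_{A_0})$ is a genuine Hilbert space on which $\tilde T,\tilde S$ are compact, and the identities $\|\tilde T\|_{A_0}=\|T\|_A$, $M_{A_0}^{\tilde T}=M_A^T\cap\overline{R(A)}$, $\langle Tv,Sv\rangle_A=\langle\tilde Tv,\tilde Sv\rangle_{A_0}$ all check out. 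Two caveats you should make explicit. First, Proposition \ref{helpful} and Remark \ref{t implies t tilda} are stated in the paper only for \emph{real} Hilbert spaces; their proofs are field-independent, but you are using them in the complex setting and should say so. Second, and more substantively, the entire complex case is carried by the ``standard extension of Bhatia--\v{S}emrl to compact operators on an infinite-dimensional Hilbert space,'' which you invoke without proof or citation; note that this is exactly the $A=I$ instance of the theorem you are proving, so your argument is really a reduction of the general semi-Hilbertian statement to its $A=I$ special case rather than a self-contained proof. That reduction is legitimate (the $A=I$ compact case is in the literature, e.g.\ as an extension of \cite{BS}), but the heavy lifting happens in the cited result, not in your argument; a precise reference, or a short weak-compactness argument deriving the compact case from Theorem \ref{compact} at $\epsilon=0$ by producing a single norming vector, would close this.
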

  Hence, $A$-orthogonality of $A-$bounded compact operator is determined by some $A$-unit vector $v \in M^T_A$. In our next theorem we establish that  $(\epsilon,A)-$ approximate orthogonality in the sense of Chmieli$\acute{n}$ski of $A$-bounded compact operator can  be determined by some $A$-unit vector $x \in M^T_A$ under some additional conditions.
\begin{theorem}\label{real compact tx app ortho sx}
   Let $\mathbb{H}$ be a real Hilbert space and $\epsilon \in [0,1).$ Suppose  $B_{\mathbb{H}(A)} \cap \overline{R(A)}$ is bounded with respect to $\|\cdot\|$. Let $T,S \in B_{A^{1/2}}(\mathbb{H}) \cap \mathbb{K(\mathbb{H})}$ be such that  $M_{A}^{T} \subseteq M_{A}^{S}$. Then $T \bot_{\epsilon(A)} S$ if and only if $Tx \bot_{\epsilon(A)} Sx,$ where $x \in M_{A}^{T}.$
\end{theorem}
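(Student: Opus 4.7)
The plan is to reduce both sides of the equivalence to a common inner-product inequality by chaining together two earlier results: Theorem \ref{equality}, which identifies $\bot_{\epsilon(A)}$ with $\bot_A^\epsilon$ at the vector level, and Theorem \ref{real compact single point}, which characterizes $T \bot_{\epsilon(A)} S$ for $A$-bounded compact operators in the real setting via a single norming $A$-unit vector. The hypothesis $M_A^T \subseteq M_A^S$ is precisely what is needed to convert between the inner-product bound $\epsilon \|Tx\|_A \|Sx\|_A$ (appearing in $Tx \bot_{\epsilon(A)} Sx$) and the operator-norm bound $\epsilon \|T\|_A \|S\|_A$ (appearing in the operator-level characterization), since on $M_A^T$ we have both $\|Tx\|_A = \|T\|_A$ and $\|Sx\|_A = \|S\|_A$.

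For the \emph{only if} direction, assume $T \bot_{\epsilon(A)} S$. Applying Theorem \ref{real compact single point} (the boundedness hypothesis on $B_{\mathbb{H}(A)} \cap \overline{R(A)}$ is in force), I obtain an $A$-unit vector $x$ such that $\|Tx\|_A = \|T\|_A$ and $|\langle Tx, Sx\rangle_A| \leq \epsilon \|T\|_A \|S\|_A$. In particular, $x \in M_A^T$, and the inclusion $M_A^T \subseteq M_A^S$ gives $\|Sx\|_A = \|S\|_A$. Substituting, $|\langle Tx, Sx\rangle_A| \leq \epsilon \|Tx\|_A \|Sx\|_A$, i.e., $Tx \bot_A^\epsilon Sx$, which by Theorem \ref{equality} is the same as $Tx \bot_{\epsilon(A)} Sx$.

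For the \emph{if} direction, assume $x \in M_A^T$ satisfies $Tx \bot_{\epsilon(A)} Sx$. Again by Theorem \ref{equality}, this is equivalent to $|\langle Tx, Sx\rangle_A| \leq \epsilon \|Tx\|_A \|Sx\|_A$. Since $x \in M_A^T \subseteq M_A^S$, both $\|Tx\|_A = \|T\|_A$ and $\|Sx\|_A = \|S\|_A$, so the inequality can be rewritten as $|\langle Tx, Sx\rangle_A| \leq \epsilon \|T\|_A \|S\|_A$, with $\|Tx\|_A = \|T\|_A$ holding automatically. Theorem \ref{real compact single point} then delivers $T \bot_{\epsilon(A)} S$.

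There is no genuine obstacle here; the proof is essentially a dictionary translation, and the only place to be a little careful is to confirm that Theorem \ref{real compact single point} is applied in both directions (to extract the norming vector in one direction and to certify operator-level orthogonality in the other), and that the inclusion $M_A^T \subseteq M_A^S$ is what allows the constants $\|Tx\|_A \|Sx\|_A$ and $\|T\|_A \|S\|_A$ to be interchanged without loss.
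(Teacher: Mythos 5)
Your proof is correct and follows essentially the same route as the paper: the forward direction is identical (extract a norming vector via Theorem \ref{real compact single point}, use $M_A^T \subseteq M_A^S$ to trade $\|T\|_A\|S\|_A$ for $\|Tx\|_A\|Sx\|_A$, and invoke Theorem \ref{equality}), while for the converse the paper simply writes out the one-line inequality chain $\|T+\lambda S\|_A^2 \geq \|(T+\lambda S)x\|_A^2 \geq \|Tx\|_A^2 - 2|\lambda\langle Tx,Sx\rangle_A| \geq \|T\|_A^2 - 2\epsilon\|T\|_A\|\lambda S\|_A$, which is just the sufficiency half of Theorem \ref{real compact single point} that you cite. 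No gaps.
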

\begin{proof}
 By Theorem \ref{real compact single point}, $T \bot_{\epsilon(A)} S$ if and only if there exits $x \in M_{A}^{T}$ such that $|\langle Tx,Sx\rangle_A| \leq \epsilon \|T\|_A \|S\|_A.$ As $M_{A}^{T} \subseteq M_{A}^{S}$, it is easy to see that, $|\langle Tx,Sx\rangle_A| \leq \epsilon \|T\|_A \|S\|_A =\epsilon \|Tx\|_A \|Sx\|_A.$ Hence  $Tx \bot_{\epsilon(A)} Sx.$\\
   Conversely let $\lambda \in \mathbb{R}.$  Therefore,
      $$\|T+\lambda S\|_A^2 \geq \|(T+\lambda S)x\|_A^2
         \geq\|Tx\|_A^2 - 2 |\lambda \langle Tx,Sx\rangle_A|
        \geq \|T\|_A^2 - 2 \epsilon\|T\|_A \|\lambda S\|_A.$$

     Hence, $T \bot_{\epsilon(A)}S.$ This completes the proof of the theorem.
\end{proof}

\section{ Symmetry of $(\epsilon,A)-$approximate orthogonality of operators}
By Theorem \ref{equality} and Proposition \ref{commutative}, it is easy to see that $x\bot_{\epsilon(A)}y$ if and only if $y\bot_{\epsilon(A)}x,$ where $x,y \in \mathbb{H}.$ But this is not necessarily true in case of  $(\epsilon,A)-$approximate orthogonality of operators  in the sense of Chmieli$\acute{n}$ski. We begin this section with an easy example to illustrate this fact.
\begin{example}\label{EXAMPLE}
	Consider $\mathbb{R}^2$ with usual inner product. Let $A(x,y)=(x,2y)$ for all $(x,y) \in \mathbb{R}^2.$ Let $T(x,y)=(2x,y)$ and $S(x,y)=(0,y)$ for all $(x,y) \in \mathbb{R}^2.$ Let $\epsilon = \frac{1}{3}.$ We show that $T\bot_{\epsilon(A)}S$ but $S\not\perp_{\epsilon(A)}T.$ Clearly, $\|T\|_A=2$ and $\|S\|_A=1$. It is easy to see that $M_{A}^{T}=\{\pm (1,0)\}$ and $M_{A}^{S}=\{\pm (0,\frac{1}{\sqrt{2}})\}.$ Further note that $|\langle T(1,0),S(1,0)\rangle_A| = 0 \leq \epsilon \|T\|_A \|S\|_A$ but $|\langle S(0,\frac{1}{\sqrt{2}}),T(0,\frac{1}{\sqrt{2}})\rangle_A| = 1 \nleq \epsilon \|T\|_A \|S\|_A.$ Hence, $T\bot_{\epsilon(A)}S$ but $S\not\perp_{\epsilon(A)}T.$
\end{example}
It is now natural to ask if $T,S \in B_{A^{1/2}}(\mathbb{H}),$ then under what conditions
 $T\bot_{\epsilon(A)}S$ implies $S\perp_{\epsilon(A)}T.$ Our next proposition gives an easy sufficient condition fot this to happen. The proof is omitted as it follows directly from Theorem \ref{real compact single point} and Theorem \ref{real compact tx app ortho sx}.

\begin{prop}\label{Mat subset Mas}
	Let $\mathbb{H}$ be a real Hilbert space and $\epsilon \in [0,1).$    Suppose  $B_{\mathbb{H}(A)} \cap \overline{R(A)}$ is bounded  with respect to $\|\cdot\|$. Let $T,S \in B_{A^{1/2}}(\mathbb{H}) \cap \mathbb{K(\mathbb{H})}$ be such that $M_{A}^{T} \subseteq M_{A}^{S}$.  If $T \bot_{\epsilon(A)} S,$  then $S \bot_{\epsilon(A)} T.$
\end{prop}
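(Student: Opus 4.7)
The plan is to argue directly from the two preceding characterization theorems, using the hypothesis $M_A^T \subseteq M_A^S$ as the bridge that lets us transfer a ``norming plus almost-orthogonal'' vector from $T$ to $S$.

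First I would invoke Theorem \ref{real compact single point} on the assumption $T \bot_{\epsilon(A)} S$ to extract an $A$-unit vector $x$ such that $\|Tx\|_A = \|T\|_A$ and $|\langle Tx, Sx\rangle_A| \leq \epsilon \|T\|_A \|S\|_A$; equivalently, one can read this directly off Theorem \ref{real compact tx app ortho sx}, which yields $x \in M_A^T$ with $Tx \bot_{\epsilon(A)} Sx$. Either way, $x$ lies in $M_A^T$.

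Next I would use the hypothesis $M_A^T \subseteq M_A^S$ to conclude $x \in M_A^S$, so that $\|Sx\|_A = \|S\|_A$ as well. Since the Hilbert space is real, the semi-inner product $\langle \cdot, \cdot \rangle_A$ is symmetric, and hence
\[
|\langle Sx, Tx\rangle_A| = |\langle Tx, Sx\rangle_A| \leq \epsilon \|T\|_A \|S\|_A = \epsilon \|S\|_A \|T\|_A.
\]
Thus the single $A$-unit vector $x$ simultaneously norms $S$ and satisfies the bound required by condition (ii) of Theorem \ref{real compact single point} (with the roles of $T$ and $S$ interchanged). Applying the sufficient direction of that theorem then delivers $S \bot_{\epsilon(A)} T$.

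There is really no hard step here: everything rests on extracting the norming-vector characterization from the compact-operator setting and exploiting symmetry of the form $\langle\cdot,\cdot\rangle_A$ in the real case. The only subtle point to double check is the direction of the inclusion in Theorem \ref{real compact tx app ortho sx}: that theorem is stated with $M_A^T \subseteq M_A^S$, which is exactly the direction we have, so it applies to the forward implication (producing $x$), while the backward implication for $S \bot_{\epsilon(A)} T$ can be obtained by direct verification or by appealing to the sufficient direction of Theorem \ref{real compact single point} rather than to Theorem \ref{real compact tx app ortho sx} with the roles swapped (where the inclusion would point the wrong way).
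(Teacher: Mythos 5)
Your argument is correct and is exactly the route the paper intends: the paper omits the proof, stating only that it follows from Theorem \ref{real compact single point} and Theorem \ref{real compact tx app ortho sx}, and you have filled in precisely those details. You also correctly flag the one subtlety — that the reverse direction must come from the sufficient part of Theorem \ref{real compact single point} (a single norming vector for $S$ with $|\langle Sx,Tx\rangle_A|\leq \epsilon\|S\|_A\|T\|_A$) rather than from Theorem \ref{real compact tx app ortho sx} with the roles swapped, since the inclusion $M_A^T\subseteq M_A^S$ would then point the wrong way.
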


 In the following corollary we show that $T \bot_{\epsilon(A)} S \Leftrightarrow S \bot_{\epsilon(A)} T$ holds under some additional condition.
\begin{corollary}
  Let $\mathbb{H}$ be a real Hilbert space and  $\epsilon \in [0,1).$ Suppose  $B_{\mathbb{H}(A)} \cap \overline{R(A)}$ is bounded  with respect to $\|\cdot\|$. Let $T,S \in B_{A^{1/2}}(\mathbb{H}) \cap \mathbb{K(\mathbb{H})}$ be such that $M_{A}^{T} = M_{A}^{S}$.  Then $T \bot_{\epsilon(A)} S$ if and only if  $S \bot_{\epsilon(A)} T.$
\end{corollary}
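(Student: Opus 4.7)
The plan is to derive the corollary as an immediate double application of Proposition \ref{Mat subset Mas}. The hypothesis $M_{A}^{T} = M_{A}^{S}$ is a symmetric condition that simultaneously yields both inclusions $M_{A}^{T} \subseteq M_{A}^{S}$ and $M_{A}^{S} \subseteq M_{A}^{T}$, and the proposition already hands us the one-sided implication under exactly this kind of inclusion.

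Concretely, I would proceed as follows. First, assume $T \bot_{\epsilon(A)} S$. Since $M_{A}^{T} = M_{A}^{S}$ implies $M_{A}^{T} \subseteq M_{A}^{S}$, and since $T, S \in B_{A^{1/2}}(\mathbb{H}) \cap \mathbb{K}(\mathbb{H})$ with $B_{\mathbb{H}(A)} \cap \overline{R(A)}$ bounded with respect to $\|\cdot\|$, the hypotheses of Proposition \ref{Mat subset Mas} are satisfied, and I conclude $S \bot_{\epsilon(A)} T$. Conversely, assuming $S \bot_{\epsilon(A)} T$, I swap the roles of $T$ and $S$: the inclusion $M_{A}^{S} \subseteq M_{A}^{T}$ again follows from equality, so Proposition \ref{Mat subset Mas} applied with $S$ in place of $T$ (and vice versa) yields $T \bot_{\epsilon(A)} S$.

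There is essentially no obstacle here since the corollary is a symmetric repackaging of the proposition. The only small point to verify is that the roles of $T$ and $S$ are fully interchangeable in the statement of Proposition \ref{Mat subset Mas}, which is clear because the hypotheses on $T$ and $S$ ($A$-bounded, compact) are symmetric, and the norming-set condition just flips accordingly. Thus the proof reduces to two invocations of Proposition \ref{Mat subset Mas}, and no new computation or use of Theorems \ref{real compact single point} or \ref{real compact tx app ortho sx} is required beyond what is already absorbed into that proposition.
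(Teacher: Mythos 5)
Your proposal is correct and matches the paper's proof, which simply cites Proposition \ref{Mat subset Mas}; the intended argument is exactly your double application of that proposition, using each inclusion coming from the equality $M_{A}^{T}=M_{A}^{S}$.
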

\begin{proof}
  The proof follows trivially from Proposition \ref{Mat subset Mas}.
\end{proof}

In \cite{gsp}, the authors proved that in a finite-dimensional real Hilbert space, an operator is right symmetric if and only if it is an isometry. In our next theorem, we obtain the characterization of $(\epsilon,A)-$approximate right symmetric $A-$bounded operator in the sense of Chmieli$\acute{n}$ski by means of  $A$-isometry. Note that  an element $T \in B_{A^{1/2}}(\mathbb{H})$ is said to be \textit{$A$-isometry} if $M_A^{T} = S_{\mathbb{H}(A)}.$
\begin{theorem}\label{right symmetric characterization}
 Let $\mathbb{H}$ be a real Hilbert space such that $dim\overline{R(A)} < \infty.$ Let $\epsilon \in [0,1)$ and $T \in B_{A^{1/2}}(\mathbb{H})$. Then $T$ is $(\epsilon,A)-$approximate right symmetric point in the sense of Chmieli$\acute{n}$ski if and only if $T$ is  an $A$-isometry.
\end{theorem}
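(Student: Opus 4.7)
The plan is to reduce to the finite-dimensional real Hilbert space $\mathbb{H}_0:=(\overline{R(A)},\langle\cdot,\cdot\rangle_{A_0})$ via Proposition~\ref{helpful} and Remark~\ref{t implies t tilda}, and then handle the two implications separately; Theorem~\ref{real compact single point} is available throughout because $\dim\overline{R(A)}<\infty$ forces $B_{\mathbb{H}(A)}\cap\overline{R(A)}$ to be bounded in $\|\cdot\|$ and turns every $A$-bounded operator restricted to $\overline{R(A)}$ into a (trivially) compact one.

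For the direction $(\Leftarrow)$: if $T$ is an $A$-isometry, parts $(iv)$--$(vi)$ of Proposition~\ref{helpful} force $\tilde T$ to be an $A_0$-isometry, so every $A_0$-unit vector lies in $M_{A_0}^{\tilde T}$. Given $S\bot_{\epsilon(A)}T$, Remark~\ref{t implies t tilda} yields $\tilde S\bot_{\epsilon(A_0)}\tilde T$; Theorem~\ref{real compact single point} then produces an $A_0$-unit vector $x$ with $\|\tilde Sx\|_{A_0}=\|\tilde S\|_{A_0}$ and $|\langle\tilde Sx,\tilde Tx\rangle_{A_0}|\le\epsilon\|\tilde S\|_{A_0}\|\tilde T\|_{A_0}$. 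Since this same $x$ also attains $\|\tilde T\|_{A_0}$, expanding $\|\tilde T+\lambda\tilde S\|_{A_0}^2\ge\|(\tilde T+\lambda\tilde S)x\|_{A_0}^2$ immediately yields $\tilde T\bot_{\epsilon(A_0)}\tilde S$, and hence $T\bot_{\epsilon(A)}S$ by Remark~\ref{t implies t tilda}.

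For the contrapositive of $(\Rightarrow)$: suppose $T$ is not an $A$-isometry, so $\tilde T$ is not an $A_0$-isometry on $\mathbb{H}_0$; I will exhibit $S\in B_{A^{1/2}}(\mathbb{H})$ with $S\bot_{\epsilon(A)}T$ but $T\not\bot_{\epsilon(A)}S$. Using the $A_0$-polar decomposition $\tilde T=Q_0P_0$ with $Q_0$ orthogonal on $\mathbb{H}_0$ and $P_0=\sqrt{\tilde T^{*}\tilde T}$ positive symmetric, the map $C\mapsto Q_0^{*}C$ is an isometric bijection of $\mathbb{L}(\mathbb{H}_0)$ preserving $\|\cdot\|_{A_0}$ and $\langle\cdot,\cdot\rangle_{A_0}$; it therefore preserves $\bot_{\epsilon(A_0)}$ in both slots, so it suffices to exhibit a counterexample for $P_0$ (which is not a scalar multiple of the identity since $\tilde T$ is not an isometry). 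Diagonalize $P_0$ in an $A_0$-orthonormal basis $\{e_i\}$ so that $P_0=\operatorname{diag}(\sigma_1,\ldots,\sigma_n)$; normalizing $\sigma_1=1$, let $k$ be the multiplicity of $\sigma_1$, so $\sigma_{k+1}<1$.

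Let $S_0$ be the orthogonal rotation by angle $\theta$ in $\operatorname{span}(e_1,e_{k+1})$, identity on the complement, where $\cos\theta$ is any value strictly between $\epsilon$ and $(2\epsilon+1-\sigma_{k+1})/(1+\sigma_{k+1})$; this interval is nonempty exactly because $(1-\sigma_{k+1})(1+\epsilon)>0$. For $y=\sum_{i=1}^{k}c_ie_i$ on the $P_0$-norm-attainment sphere, a direct computation gives $\langle P_0y,S_0y\rangle_{A_0}=c_1^{2}\cos\theta+(1-c_1^{2})\ge\cos\theta>\epsilon$, so $P_0\not\bot_{\epsilon(A_0)}S_0$ by Theorem~\ref{real compact single point}. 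For the test vector $x=\cos\phi\,e_1+\sin\phi\,e_{k+1}$, the identity $\cos(\phi+\theta)\cos\phi+\sin(\phi+\theta)\sin\phi=\cos\theta$ yields $\langle S_0x,P_0x\rangle_{A_0}=\cos\theta-(1-\sigma_{k+1})\sin(\phi+\theta)\sin\phi$, whose range over $\phi$ is the closed interval centered at $\cos\theta(1+\sigma_{k+1})/2$ with half-width $(1-\sigma_{k+1})/2$; the upper bound on $\cos\theta$ makes this range intersect $[-\epsilon,\epsilon]$, so some $\phi$ gives $|\langle S_0x,P_0x\rangle_{A_0}|\le\epsilon$, yielding $S_0\bot_{\epsilon(A_0)}P_0$. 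Lifting through $\tilde S:=Q_0S_0$ on $\mathbb{H}_0$ and then extending by zero on $N(A)$ produces the desired $S\in B_{A^{1/2}}(\mathbb{H})$ via Proposition~\ref{helpful}. The main obstacle is the explicit construction of $S_0$: the simultaneous satisfaction of the strict lower bound $\cos\theta>\epsilon$ (needed for the failure direction) and the non-strict upper bound on $\cos\theta$ (needed for the success direction) reduces precisely to $(1-\sigma_{k+1})(1+\epsilon)>0$, where both hypotheses $\sigma_{k+1}<1$ (non-isometry) and $\epsilon<1$ are essential.
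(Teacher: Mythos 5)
Your argument is correct, and while your backward implication is essentially the paper's (reduce to $\tilde T$ on $\overline{R(A)}$ via Proposition~\ref{helpful} and Remark~\ref{t implies t tilda}, then use that an $A$-isometry attains its norm at every $A$-unit vector), your forward implication takes a genuinely different route. The paper invokes \cite[Th.2.4]{ssp} to realize $M_{A_0}^{\tilde T}$ as the $A_0$-unit sphere of a subspace $H_0$, chooses an adapted $A_0$-orthonormal basis, and defines $\tilde S$ to equal $-\tilde T$ on $H_0$, to send one additional basis vector to a unit vector $w_0 \bot_{A_0} \tilde T(H_0)$, and to vanish elsewhere; this gives \emph{exact} orthogonality $\tilde S \bot_{A_0}^{B} \tilde T$ (hence a fortiori $\tilde S \bot_{\epsilon(A_0)} \tilde T$) while $|\langle \tilde Tx,\tilde Sx\rangle_{A_0}| = 1 > \epsilon$ on $M_{A_0}^{\tilde T}$. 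You instead use the $A_0$-polar decomposition $\tilde T = Q_0P_0$, note that left composition with $Q_0^{*}$ preserves $\bot_{\epsilon(A_0)}$ in both slots, and for the diagonalized positive part build $S_0$ as a rotation in the plane of a top singular vector $e_1$ and a sub-top one $e_{k+1}$, with the angle tuned to the gap $1-\sigma_{k+1}$. I verified the computations: the admissible interval for $\cos\theta$ is nonempty exactly because $(1+\epsilon)(1-\sigma_{k+1})>0$; the range of $\langle S_0x,P_0x\rangle_{A_0}$ over your test circle is the interval centered at $\cos\theta\,(1+\sigma_{k+1})/2$ with half-width $(1-\sigma_{k+1})/2$, which meets $[-\epsilon,\epsilon]$ under your upper bound; and $\langle P_0y,S_0y\rangle_{A_0} = 1-c_1^{2}(1-\cos\theta) \ge \cos\theta > \epsilon$ on all of $M_{A_0}^{P_0}$, which rules out $P_0\bot_{\epsilon(A_0)}S_0$ by Theorem~\ref{real compact single point}. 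The one small repair: you should intersect your range for $\cos\theta$ with $(\epsilon,1)$, since when $\sigma_{k+1}<\epsilon$ the stated upper bound $(2\epsilon+1-\sigma_{k+1})/(1+\sigma_{k+1})$ exceeds $1$ and is then not attainable as a cosine; the intersection is still nonempty because $\epsilon<1$. On balance, your construction is more quantitative --- it produces a witness of genuinely approximate rather than exact orthogonality and replaces the structural result \cite[Th.2.4]{ssp} by elementary diagonalization --- whereas the paper's construction transfers almost verbatim to the infinite-dimensional compact setting of Theorem~\ref{right symmetry infinite compact}, where an orthogonal polar factor is not available.
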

\begin{proof}
  First we consider $T $ to be an $A$-isometry. Let $S \in B_{A^{1/2}}(\mathbb{H})$ be such that $S \bot_{\epsilon(A)} T.$ Since,  $M_A^{T} = S_{\mathbb{H}(A)},$ by Theorem \ref{real A bounded}, it follows that $T \bot_{\epsilon(A)} S.$
      \newline
      Next we prove the necessary part of the theorem.
       Let $A_0 = A\mid_{\overline{R(A)}}.$ Let $P$ be the orthogonal projection on $\overline{R(A)}.$ Let $\tilde{T}= PT\mid_{\overline{R(A)}}.$ Suppose on the contrary that $M_A^{T} \neq S_{\mathbb{H}(A)}.$ Then, by (vi) of Proposition \ref{helpful}, $ M_{A_0}^{\tilde{T}} = M_A^{T} \cap \overline{R(A)} \neq S_{\mathbb{H}(A)} \cap \overline{R(A)}.$  Without loss of generality we assume that $\|T\|_A = \|\tilde{T}\|_{A_0} =1.$ By \cite[Th.2.4]{ssp},  $M_{A_0}^{\tilde{T}}$ is the $A_0-$unit sphere of some subspace  $H_0$ of $\overline{R(A)}.$ As $A_0$ is positive definite on $\overline{R(A)}$, it follows that $\langle ~,~\rangle_{A_0}, \|\cdot\|_{A_0}$ are inner product and norm on $\overline{R(A)},$ respectively. As $\overline{R(A)}$ is finite-dimensional, without loss of generality we assume that $\{x_1,x_2,...,x_m\}$ be an $A_0-$orthonormal basis of $H_0$. Then, $\{x_1,x_2,...,x_m,x_{m+1},...,x_n\}$ is an $A_0-$orthonormal basis of $\overline{R(A)}$.  By \cite[Th.2.11]{ssp}, it is clear that $$\|\sum_{i=1}^{m} c_i \tilde{T}x_i\|_{A_0}^2 = \|\tilde{T}\|_{A_0}^2 \|\sum_{i=1}^{m} c_i x_i\|_{A_0}^2 =\sum_{i=1}^{m} |c_i|^2.$$ As $M_{A_0}^{\tilde{T}} \neq S_{\mathbb{H}(A)} \cap \overline{R(A)},$ it follows that  $\tilde{T}(H_0) \neq \overline{R(A)}.$ So, there exists an $A_0-$unit vector $w_0 \in \overline{R(A)}$ such that $w_0 \bot_{A_0} \tilde{T}(H_0).$ Thus, for $w_0, \tilde{T}x_{m+1},$ by ($i$) of Proposition \ref{x+,x-}, either $$\|w_0 +\lambda \tilde{T}x_{m+1}\|_{A_0} \geq 1 ~\forall\lambda \geq 0~\text{or}~\|w_0 +\lambda \tilde{T}x_{m+1}\|_{A_0} \geq 1 ~\forall\lambda \leq 0.$$ Let $\|w_0 +\lambda \tilde{T}x_{m+1}\|_{A_0} \geq 1 ~\forall\lambda \geq 0$. Consider,
\begin{eqnarray*}
       \tilde{S} &:& \overline{R(A)} \longrightarrow \overline{R(A)}\\
        \tilde{S}x_i &=& -\tilde{T}x_i ~\text{for all}~ i\in\{1,2,...,m\}\\
         \tilde{S}x_i &=& w_0  ~\text{for}~i=m+1\\
         \tilde{S}x_i &=& 0~\text{for all}~ i\in\{m+2,m+3,...,n\}.
       \end{eqnarray*}
       Now we show that $\tilde{S} \bot_{\epsilon(A_0)} \tilde{T}.$ Let $z = \sum_{i=1}^{n} c_i x_i \in S_{\mathbb{H}(A)} \cap  \overline{R(A)}.$ It is easy to see that $\sum_{i=1}^{n} |c_i|^2 =1.$ Clearly, $\|\tilde{S}z\|_{A_0} ^2 = \|\sum_{i=1}^{m} c_i \tilde{T}x_i\|_{A_0}^2 + c_{m+1}^2 = \sum_{i=1}^{m} |c_i|^2  + c_{m+1}^2 \leq \sum_{i=1}^{n} |c_i|^2 =1.$ We also see that $\|\tilde{S}x_{m+1}\|_{A_0} =1.$ Thus, $\|\tilde{S}\|_{A_0} =1.$ Further note that $x_{i} \in M_{A_0}^{\tilde{S}} ~\text{for all} ~ i \in \{1,2,...,m+1\}.$ Now for $\lambda \geq 0,$ $$\|\tilde{S} + \lambda \tilde{T}\|_{A_0} \geq \|(\tilde{S} + \lambda \tilde{T})x_{m+1}\|_{A_0} =\|w_0 + \lambda \tilde{T}x_{m+1}\|_{A_0} \geq 1.$$ For $\lambda \leq 0,$ $$\|\tilde{S} + \lambda \tilde{T}\|_{A_0} \geq \|(\tilde{S} + \lambda \tilde{T})x_{m}\|_{A_0} =\| -\tilde{T}x_m + \lambda \tilde{T}x_{m}\|_{A_0} \geq 1.$$ Therefore, $\tilde{S} \bot_{A_0}^{B} \tilde{T}$ and so $\tilde{S} \bot_{\epsilon(A_0)} \tilde{T}.$ Next we show that $\tilde{T} \not\perp_{\epsilon(A_0)} \tilde{S}.$ It is easy to see that for any $x \in M_{A_0}^{\tilde{T}}$, $|\langle \tilde{T}x,\tilde{S}x\rangle_{A_0}| =\|\tilde{T}x\|_{A_0}^2 =1 > \epsilon.$ As $\overline{R(A)}$ is finite-dimensional, $\tilde{T},\tilde{S}$ are compact operators with respect to $\|\cdot\|$ on $\overline{R(A)}.$ Hence, by Theorem \ref{real compact single point}, $\tilde{T} \not\perp_{\epsilon(A_0)} \tilde{S}.$ Consider  $U : ~\mathbb{H}~ \longrightarrow ~\mathbb{H}$ by  $$U(x) = \tilde{S}x , x \in \overline{R(A)} ~\text{and}~
         U(x)=0, x \in N(A).$$
       It is easy to see that $\tilde{U} = \tilde{S}$. Thus, by Remark \ref{t implies t tilda}, $U \bot_{\epsilon(A)} T$ but $T \not\perp_{\epsilon(A)} U.$ This completes the proof of the theorem.
       \end{proof}
       As a consequence of Theorem \ref{right symmetric characterization}, we can immediately obtain the following:
       \begin{corollary}\label{T is isometry}
          Let $\mathbb{H}$ be a finite-dimensional real Hilbert space. Let $\epsilon \in [0,1)$ and $T \in \mathbb{L(\mathbb{H})}$. Then $T$ is approximate right symmetric if and only if $T$ is an isometry.
         \end{corollary}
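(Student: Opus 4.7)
The plan is to deduce this corollary from Theorem \ref{right symmetric characterization} by the direct specialization $A = I$, verifying that every hypothesis and conclusion of the theorem reduces precisely to the classical notion used in the statement of the corollary.

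First I would verify that all the $A$-dependent objects collapse correctly when $A = I$. Concretely: $\|\cdot\|_A = \|\cdot\|$ and $\langle\cdot,\cdot\rangle_A = \langle\cdot,\cdot\rangle$, so $B_{A^{1/2}}(\mathbb{H}) = \mathbb{L}(\mathbb{H})$ and $\|T\|_A = \|T\|$ for every $T \in \mathbb{L}(\mathbb{H})$. Moreover $N(I) = \{\theta\}$, so $\overline{R(I)} = \mathbb{H}$, which means the hypothesis $\dim \overline{R(A)} < \infty$ becomes exactly $\dim \mathbb{H} < \infty$, which is given. Next, the definition of $\bot_{\epsilon(A)}$ with $A = I$ reads $\|T + \lambda S\|^{2} \geq \|T\|^{2} - 2\epsilon \|T\| \|\lambda S\|$ for all $\lambda \in \mathbb{K}$, which is precisely the definition of $\bot^{\epsilon}_{B}$ recorded in Definition \ref{approximate Birkhoff orthogonal}. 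Hence an $(\epsilon,I)$-approximate right symmetric operator in the sense of Chmieli\'nski is exactly an approximate right symmetric point of $\mathbb{L}(\mathbb{H})$ in the sense introduced after Definition \ref{A approx left symmetry right symmetry def}.

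Second, I would identify the notion of $A$-isometry with the usual isometry condition in this setting. An $A$-isometry is defined by $M_{A}^{T} = S_{\mathbb{H}(A)}$, and with $A = I$ this reads $M_{I}^{T} = S_{\mathbb{H}}$; that is, $\|Tx\| = \|T\|$ for every unit vector $x$, which is the notion of isometry (up to the scaling $\|T\|$) intended in the corollary.

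Given these identifications, the corollary is immediate from Theorem \ref{right symmetric characterization}: substituting $A = I$ in both directions of the equivalence $T$ is $(\epsilon,A)$-approximate right symmetric in the sense of Chmieli\'nski $\Longleftrightarrow$ $T$ is an $A$-isometry yields the desired statement. The main step, therefore, is merely the dictionary between the semi-Hilbertian and Hilbertian frameworks; there is no genuine obstacle, since every hypothesis of Theorem \ref{right symmetric characterization} holds under the stronger assumption $A = I$ and $\dim \mathbb{H} < \infty$, and both the orthogonality relation and the isometry condition reduce to their classical counterparts.
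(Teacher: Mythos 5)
Your proposal is correct and matches the paper's approach exactly: the paper also obtains this corollary as the immediate specialization $A=I$ of Theorem \ref{right symmetric characterization}, with all the semi-Hilbertian notions collapsing to their classical counterparts as you describe. Your remark that ``isometry'' here means $M^T_I = S_{\mathbb{H}}$, i.e.\ a scalar multiple of an isometry, is the right reading of the paper's definition of $A$-isometry.
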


       \begin{remark}\label{2,7,4.4} By Theorem \ref{right symmetric characterization}, it follows that if $\mathbb{H}$ is finite-dimensional and $T \in B_{A^{1/2}}(\mathbb{H}),$ then $T$ is $(\epsilon,A)-$approximate right symmetric point in the sense of Chmieli$\acute{n}$ski if and only if $T$ is  $A-$isometry. Also
         note that in Corollary \ref{T is isometry}, if we consider  $\epsilon =0$, we obtain the characterization of right symmetric point in $\mathbb{L(\mathbb{H})}$. Hence, Theorem \ref{right symmetric characterization} is a  generalization of  \cite[Th.2.7]{gsp} and  \cite[Th.4.4]{turn} in finite-dimensional case.
       \end{remark}
Now we obtain the characterization of $(\epsilon,A)-$approximate right symmetric point in the sense of Chmieli$\acute{n}$ski for $A$-bounded compact operators in infinite-dimensional Hilbert space setting.
       \begin{theorem}\label{right symmetry infinite compact}
         Let $\mathbb{H}$ be an infinite-dimensional separable real Hilbert space. Suppose  $B_{\mathbb{H}(A)} \cap \overline{R(A)}$ is bounded with respect to $\|\cdot\|$ and $dim \overline{R(A)}=\infty$. Let $\epsilon \in [0,1)$ and $T \in B_{A^{1/2}}(\mathbb{H}) \cap \mathbb{K(\mathbb{H})}$. Then $T$ is $(\epsilon,A)-$approximate right symmetric point in the sense of Chmieli$\acute{n}$ski if and only if $\|T\|_A =0.$
       \end{theorem}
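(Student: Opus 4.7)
The plan is to dispose of the sufficient direction trivially and to prove the necessary direction by contrapositive, adapting the construction of Theorem \ref{right symmetric characterization} to the infinite-dimensional compact setting. If $\|T\|_A=0$, then for any $S \in B_{A^{1/2}}(\mathbb{H})$ and $\lambda \in \mathbb{R}$, $\|T+\lambda S\|_A^2 = |\lambda|^2\|S\|_A^2 \geq 0 = \|T\|_A^2 - 2\epsilon\|T\|_A\|\lambda S\|_A$, so $T\bot_{\epsilon(A)}S$ holds for every $S$ and the right-symmetry implication is satisfied vacuously.

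For necessity, assume $\|T\|_A>0$; the goal is to produce $S \in B_{A^{1/2}}(\mathbb{H})\cap\mathbb{K}(\mathbb{H})$ with $S\bot_{\epsilon(A)}T$ but $T\not\perp_{\epsilon(A)}S$. Normalize $\|T\|_A=1$, let $P$ be the orthogonal projection onto $\overline{R(A)}$, $A_0=A|_{\overline{R(A)}}$, and $\tilde{T}=PT|_{\overline{R(A)}}$. Since $B_{\mathbb{H}(A)}\cap\overline{R(A)}$ is $\|\cdot\|$-bounded, $\|\cdot\|$ and $\|\cdot\|_{A_0}$ are equivalent on $\overline{R(A)}$, so $(\overline{R(A)},\|\cdot\|_{A_0})$ is a separable infinite-dimensional Hilbert space on which $\tilde{T}$ is compact with $\|\tilde{T}\|_{A_0}=1$. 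By \cite[Th.2.4]{ssp} (whose conclusion carries over to this infinite-dimensional compact setting via the spectral theory of compact operators), $M_{A_0}^{\tilde{T}}$ is the $A_0$-unit sphere of a finite-dimensional subspace $H_0 \subset \overline{R(A)}$, which is a proper subspace because $\dim\overline{R(A)}=\infty$. Fix an $A_0$-orthonormal basis $\{x_1,\ldots,x_m\}$ of $H_0$ and extend by separability to a countable $A_0$-orthonormal basis $\{x_1,x_2,\ldots\}$ of $\overline{R(A)}$; by \cite[Th.2.11]{ssp}, $\tilde{T}|_{H_0}$ is an $A_0$-isometry. Since $\tilde{T}(H_0)$ is finite-dimensional inside $\overline{R(A)}$, pick an $A_0$-unit $w_0 \in \overline{R(A)}$ with $w_0\bot_{A_0}\tilde{T}(H_0)$, and by Proposition \ref{x+,x-}(i) with $\alpha=1$ applied to $(w_0,\tilde{T}x_{m+1})$ assume WLOG $\|w_0+\lambda\tilde{T}x_{m+1}\|_{A_0}\geq 1$ for all $\lambda\geq 0$.

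Define the finite-rank (hence compact) operator $\tilde{S}$ on $\overline{R(A)}$ by $\tilde{S}x_i=-\tilde{T}x_i$ for $i\leq m$, $\tilde{S}x_{m+1}=w_0$, and $\tilde{S}x_i=0$ for $i\geq m+2$. A Pythagorean computation using $w_0\bot_{A_0}\tilde{T}(H_0)$ together with the $A_0$-isometry of $\tilde{T}|_{H_0}$ yields $\|\tilde{S}\|_{A_0}=1$. Exactly as in Theorem \ref{right symmetric characterization}, testing at $x_{m+1}$ for $\lambda\geq 0$ and at $x_1$ for $\lambda\leq 0$ shows $\tilde{S}\bot^B_{A_0}\tilde{T}$ and hence $\tilde{S}\bot_{\epsilon(A_0)}\tilde{T}$; meanwhile, for every $x \in M_{A_0}^{\tilde{T}}=S_{H_0(A_0)}$ we have $|\langle\tilde{T}x,\tilde{S}x\rangle_{A_0}|=\|\tilde{T}x\|_{A_0}^2=1>\epsilon\|\tilde{T}\|_{A_0}\|\tilde{S}\|_{A_0}$, so Theorem \ref{real compact single point} applied in $\overline{R(A)}$ forces $\tilde{T}\not\perp_{\epsilon(A_0)}\tilde{S}$. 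Lifting via $Sx=\tilde{S}(Px)$ produces $S \in B_{A^{1/2}}(\mathbb{H})\cap\mathbb{K}(\mathbb{H})$ whose paper-tilde $PS|_{\overline{R(A)}}$ equals $\tilde{S}$, and Remark \ref{t implies t tilda} transfers the conclusions back to $\mathbb{H}$: $S\bot_{\epsilon(A)}T$ while $T\not\perp_{\epsilon(A)}S$, contradicting the assumed right symmetry of $T$.

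The main obstacle is rigorously establishing that, for the compact operator $\tilde{T}$ on the infinite-dimensional Hilbert space $(\overline{R(A)},\|\cdot\|_{A_0})$, the norm-attainment set $M_{A_0}^{\tilde{T}}$ really is the $A_0$-unit sphere of a finite-dimensional subspace. In the finite-dimensional setting of Theorem \ref{right symmetric characterization} this is supplied directly by \cite[Th.2.4]{ssp}; here it should follow from the spectral theory of compact self-adjoint operators, which forces the top eigenvalue $\|\tilde{T}\|_{A_0}^2$ to have a finite-dimensional eigenspace. Once this structural fact is granted, every remaining verification is a routine adaptation of the finite-dimensional argument.
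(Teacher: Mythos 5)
Your proposal is correct and follows essentially the same route as the paper: normalize, pass to $\tilde{T}$ on $(\overline{R(A)},\|\cdot\|_{A_0})$, build the same finite-rank $\tilde{S}$ from an orthonormal basis adapted to $H_0$ and a vector $w_0\perp_{A_0}\tilde{T}(H_0)$, and lift back via the projection. The only cosmetic difference is how you guarantee such a $w_0$ exists: you invoke finite-dimensionality of $H_0$ via spectral theory of the compact operator, whereas the paper argues that $\tilde{T}(H_0)=\overline{R(A)}$ would force the $A_0$-unit ball of $\overline{R(A)}$ to be compact, contradicting $\dim\overline{R(A)}=\infty$; both close the same gap.
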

       \begin{proof}We only prove the necessary part of the theorem, as the sufficient part follows trivially. Suppose on the contrary that $\|T\|_A \neq 0.$ Without loss of generality we assume that $\|T\|_A =1.$  Let $A_0 = A\mid_{\overline{R(A)}}.$ Let $P$ be the orthogonal projection on $\overline{R(A)}.$ Let $\tilde{T}= PT\mid_{\overline{R(A)}}.$ Then, $\|\tilde{T}\|_{A_0} =1.$ As $A_0$ is positive definite on $\overline{R(A)}$, $\|\cdot\|_{A_0}$ and $\langle~.~\rangle_{A_0}$ are norm and inner product on $\overline{R(A)},$ respectively. It is easy to see that $\|\cdot\|$ and $\|\cdot\|_{A_0}$ are equivalent norms on $\overline{R(A)}$(\cite{ssp}).
        Next we show that, if $T \in B_{A^{1/2}}(\mathbb{H}) \cap \mathbb{K(\mathbb{H})}$ and $A$ is positive definite on $\overline{R(A)}$, then $M_A^T \cap \overline{R(A)} \neq S_{\mathbb{H}(A)} \cap \overline{R(A)}.$
         Clearly, $M_A^T \cap \overline{R(A)}$ is compact with respect to $\|\cdot\|$ (\cite{ssp}). Also note that, as $A$ is positive definite on $\overline{R(A)}$, $B_{\mathbb{H}(A)} \cap \overline{R(A)}$ is the unit ball of $\overline{R(A)}$ with respect to $\|\cdot\|_{A_0}.$ As $\overline{R(A)}$ is infinite-dimensional, $B_{\mathbb{H}(A)} \cap \overline{R(A)}$
         is not compact with respect to $\|\cdot\|_{A_0}.$ As $\|\cdot\|$ and $\|\cdot\|_{A_0}$ are equivalent on $\overline{R(A)},$ it follows that $B_{\mathbb{H}(A)} \cap \overline{R(A)}$ is not compact with respect to $\|\cdot\|.$ Therefore, $M_A^T \cap \overline{R(A)} \neq S_{\mathbb{H}(A)} \cap \overline{R(A)}.$
         Now we are ready to construct $\tilde{S} \in B_{{A_0}^{1/2}}(\overline{R(A)})$ such that $\tilde{S} \bot_{\epsilon(A_0)} \tilde{T}$ but $\tilde{T} \not \perp_{\epsilon(A_0)} \tilde{S}.$\\ By \cite[Th.2.4]{ssp}, $M_A^T \cap \overline{R(A)}$ is $A$-unit sphere of some subspace $H_0$ of $\overline{R(A)}.$ Therefore, $M_{A_0}^{\tilde{T}}$ is $A_0-$unit sphere of some subspace $H_0.$ Let $\{x_{\alpha}, \alpha \in \Lambda_1\}$ be an $A_0-$orthonormal basis of $H_0.$ Extend this to a $A_0-$orthonormal basis $\{x_{\alpha}, y_{\beta}, \alpha \in \Lambda_1,~\beta \in \Lambda_2\}$ of $\overline{R(A)}.$ Clearly, $\tilde{T}(H_0) \neq \overline{R(A)},$ otherwise $\overline{\tilde{T}(B_{H_0})} = \overline{B_{\mathbb{H}(A)} \cap \overline{R(A)}}= B_{\mathbb{H}(A)}\cap \overline{R(A)}$ and as $\tilde{T}$ is compact on $\overline{R(A)},$ it follows that  $B_{\mathbb{H}(A)} \cap \overline{R(A)}$ is compact with respect to $\|\cdot\|$, a contradiction. Therefore, there exists $w_0 \in S_{\mathbb{H}(A)} \cap \overline{R(A)}$ such that $w_0 ~\bot_{A_0} ~\tilde{T}(H_0).$ Let $\beta_0 \in \Lambda_2.$ Hence, for $w_0, \tilde{T}y_{\beta_0}$, in view of ($i$) of Proposition \ref{x+,x-}, either $\|w_0 + \lambda \tilde{T}y_{\beta_0}\|_{A_0} \geq 1$ for all $\lambda \geq 0$ or $\|w_0 + \lambda \tilde{T}y_{\beta_0}\|_{A_0} \geq 1$ for all $\lambda \leq 0.$ Without loss of generality we assume that $\|w_0 + \lambda \tilde{T}y_{\beta_0}\|_{A_0} \geq 1$ for all $\lambda \geq 0$. Define a map $\tilde{S} ~:~ \overline{R(A)}\longrightarrow ~\overline{R(A)}$ by
         \begin{eqnarray*}
           \tilde{S} x_{\alpha} &=& -\tilde{T} x_{\alpha},~ \alpha \in \Lambda_1, \\
           \tilde{S} y_{\beta} &=& w_0,  ~\beta = \beta_0,\\
           \tilde{S} y_{\beta} &=& 0, ~\beta \neq \beta_0.
         \end{eqnarray*}
         Let $z = \sum c_{\alpha_i}x_{\alpha_i} + \sum d_{\beta_i}y_{\beta_i} \in S_{\mathbb{H}(A)} \cap \overline{R(A)}.$ As $\|\tilde{T}x_{\alpha_i}\|_{A_0} = \|\tilde{T}\|_{A_0} =1,$ for all ${\alpha_i} \in \Lambda_1,$ applying  \cite[Th.2.4]{ssp} and by the fact $w_0 \bot_{A_0} \tilde{T}(H_0)$, it follows that $$\|\tilde{S} z\|_{A_0}^2 = \|\sum c_{\alpha_i}\tilde{T}x_{\alpha_i}\|_{A_0}^2 + d_{\beta_0}^2 = \sum c_{\alpha_i}^2 + d_{\beta_0}^2 \leq 1.$$ Thus, $\|\tilde{S} \|_{A_0} =1.$
          Now for any $\lambda \geq 0,$ we have $$\|\tilde{S}  +\lambda \tilde{T}\|_{A_0} \geq \|(\tilde{S}  +\lambda \tilde{T})y_{\beta_0}\|_{A_0} \geq 1.$$ Similarly for any $\lambda \leq 0,$ we obtain $$\|\tilde{S}  +\lambda \tilde{T} \|_{A_0} \geq \|(\tilde{S}  +\lambda \tilde{T})x_{\alpha}\|_{A_0} \geq 1.$$ Hence, $\tilde{S}  \bot_{\epsilon(A_0)} \tilde{T}.$ \\
         Next we show that $\tilde{T}  \not\perp_{\epsilon(A_0)} \tilde{S}.$  Let $\{x_n\} \subseteq S_{\mathbb{H}(A)} \cap \overline{R(A)}$ such that $\|\tilde{T}x_n\|_{A_0} \longrightarrow \|\tilde{T}\|_{A_0}.$ As $B_{\mathbb{H}(A)} \cap \overline{R(A)}$ is closed, convex and bounded with respect to $\|\cdot\|,$  $B_{\mathbb{H}(A)} \cap \overline{R(A)}$ is weakly compact with respect to $\|\cdot\|.$ Thus, $x_n \rightharpoonup x$ with respect to $\|\cdot\|$ for some $x \in B_{\mathbb{H}(A)} \cap \overline{R(A)}.$  As $\tilde{T}$ is a compact operator on $\overline{R(A)},$ it follows that $\tilde{T}x_n \longrightarrow \tilde{T}x$  with respect to $\|\cdot\|$  and $\|\tilde{T}x\|_{A_0} = \|\tilde{T}\|_{A_0}.$ Hence, $x \in M_{A_0}^{\tilde{T}}$. As $\tilde{S} \in B_{{A_0}^{1/2}}(\overline{R(A)}),$ it is easy to see that $\tilde{S}x_n \rightharpoonup \tilde{S}x$  with respect to $\|\cdot\|.$ As $\langle~,~\rangle_{A_0}$ is inner product on $\overline{R(A)},$  $\langle \tilde{T}x_n,\tilde{S}x_n\rangle_{A_0} \longrightarrow \langle \tilde{T}x,\tilde{S}x\rangle_{A_0}.$ But for any $x \in M_{A_0}^{\tilde{T}},$ we have $|\langle \tilde{T}x,\tilde{S}x\rangle_{A_0}| = \|\tilde{T}x\|_{A_0}^2 = 1 > \epsilon.$ Thus, $\tilde{T} \not\perp_{\epsilon(A_0)} \tilde{S}.$ Consider $U ~: ~\mathbb{H} \longrightarrow ~\mathbb{H}$ by $$Ux = \tilde{S}x, ~ x \in \overline{R(A)}~ \text{and}~ Ux = 0, ~ x \in N(A).$$ Clearly, $\tilde{U} = \tilde{S}.$ Therefore, by Remark \ref{t implies t tilda}, $U \perp_{\epsilon(A)} T$ but $T \not\perp_{\epsilon(A)} U.$
       \end{proof}
   \begin{remark}\label{2,8}
   	In \cite{gsp}, the authors proved that in infinite-dimensional Hilbert space setting, any compact operator is right symmetric if and only if it is zero operator.
   Thus our theorem generalizes \cite[Th.2.8]{gsp}.
   \end{remark}
      As a consequence of  Theorem \ref{right symmetry infinite compact}, we can immediately establish the following corollary, the proof of which is omitted as it is now trivial.
       \begin{corollary}
          Let $\mathbb{H}$ be an infinite-dimensional separable real Hilbert space. Let $\epsilon \in [0,1)$ and $T \in \mathbb{K(\mathbb{H})}$. Then $T$ is approximate right symmetric if and only if $T$ is the zero operator.
       \end{corollary}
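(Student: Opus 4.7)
The plan is to deduce this corollary as a direct specialization of Theorem \ref{right symmetry infinite compact} by setting $A=I$. First I would verify that the hypotheses of Theorem \ref{right symmetry infinite compact} are met in this reduced setting: with $A=I$, the semi-norm $\|\cdot\|_A$ coincides with the Hilbert norm $\|\cdot\|$, so $\overline{R(A)}=\mathbb{H}$, $N(A)=\{\theta\}$, and the $A$-unit ball $B_{\mathbb{H}(A)}\cap \overline{R(A)}=B_{\mathbb{H}}$ is (trivially) bounded with respect to $\|\cdot\|$. Moreover $\dim \overline{R(A)}=\dim \mathbb{H}=\infty$, and $B_{A^{1/2}}(\mathbb{H})$ reduces to $\mathbb{L}(\mathbb{H})$. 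Under this identification, the relation $\bot_{\epsilon(A)}$ coincides with the classical approximate Birkhoff-James orthogonality $\bot^{\epsilon}_{B}$, and the notion of $(\epsilon,A)$-approximate right symmetry in the sense of Chmieli\'nski becomes precisely the notion of approximate right symmetry in $\mathbb{L}(\mathbb{H})$.

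For the sufficient direction I would simply note that if $T=\theta$ (the zero operator), then for any $S\in \mathbb{K}(\mathbb{H})$ the inequality defining $\bot^{\epsilon}_{B}$ is vacuous on both sides, so the implication $S \bot^{\epsilon}_{B} T \Rightarrow T \bot^{\epsilon}_{B} S$ holds trivially. For the necessary direction I would invoke Theorem \ref{right symmetry infinite compact} with $A=I$: if $T\in \mathbb{K}(\mathbb{H})$ is approximate right symmetric, then the theorem forces $\|T\|_A=\|T\|=0$, and hence $T=\theta$.

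Because the reduction is an immediate specialization and all the technical work (constructing a witness operator $\tilde{S}$ separating $S\bot_{\epsilon(A_0)}T$ from $T\bot_{\epsilon(A_0)}S$ on the range of $A$) has already been carried out in Theorem \ref{right symmetry infinite compact}, no step here is genuinely obstructive; the only thing to be careful about is the clean identification of all the semi-Hilbertian quantities with their classical Hilbert counterparts when $A=I$. This is why the authors remark that the proof is trivial and omit it.
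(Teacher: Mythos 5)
Your proposal is correct and matches the paper's intent exactly: the authors omit the proof precisely because the corollary is the specialization $A=I$ of Theorem \ref{right symmetry infinite compact}, under which $\overline{R(A)}=\mathbb{H}$, $B_{\mathbb{H}(A)}\cap\overline{R(A)}=B_{\mathbb{H}}$ is bounded, $B_{A^{1/2}}(\mathbb{H})=\mathbb{L}(\mathbb{H})$, and $\bot_{\epsilon(A)}$ becomes $\bot^{\epsilon}_{B}$. The only cosmetic point is that in the sufficiency direction the implication should be checked for all $S\in\mathbb{L}(\mathbb{H})$ (not just $S\in\mathbb{K}(\mathbb{H})$), which is equally trivial when $T$ is the zero operator.
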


       Next we characterize $(\epsilon,A)-$approximate left symmetry in the sense of Chmieli$\acute{n}$ski for $A-$bounded compact operator in $\mathbb{H}$.  To do so we need  the following lemmas.
       \begin{lemma}\label{left symmetry lemma}
         Let $\epsilon_{1}, \epsilon \in [0,1)$ and $\epsilon_{1} > \epsilon.$ Then there exists  \newline $ \ a \in (\epsilon\epsilon_{1} - \sqrt{1-\epsilon^2}\sqrt{1-\epsilon_{1}^2}, \ \epsilon\epsilon_{1} + \sqrt{1-\epsilon^2}\sqrt{1-\epsilon_{1}^2})$ such that
         \begin{itemize}
           \item [($i$)]   $a\epsilon_{1} > \epsilon.$
           \item [($ii$)]$\frac{a\epsilon_{1}- \epsilon}{\sqrt{1-\epsilon_{1}^2}b} < 1,$ where $a^2+b^2 =1$ and $b > 0.$
         \end{itemize}
       \end{lemma}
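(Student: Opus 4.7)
The plan is to translate the two conditions into explicit bounds on $a$ and then verify that their common solution set is non\hbox{-}empty, with the hypothesis $\epsilon_1 > \epsilon$ entering only in the final compatibility check.

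First I would rewrite condition (ii) as a quadratic inequality in $a$. Using $\epsilon_1 \in [0,1)$ we have $\sqrt{1-\epsilon_1^2} > 0$, and writing $b = \sqrt{1-a^2}$ we need $b > 0$, i.e.\ $a \in (-1,1)$. Once (i) is in force the numerator $a\epsilon_1 - \epsilon$ is positive, so (ii) is equivalent, after squaring, to
\[
(a\epsilon_1 - \epsilon)^2 < (1-\epsilon_1^2)(1-a^2),
\]
which rearranges to
\[
a^2 - 2\epsilon\epsilon_1\, a + (\epsilon^2 + \epsilon_1^2 - 1) < 0.
\]
The discriminant is $4(1-\epsilon^2)(1-\epsilon_1^2) > 0$ and the roots are exactly $\epsilon\epsilon_1 \pm \sqrt{(1-\epsilon^2)(1-\epsilon_1^2)}$, i.e.\ the endpoints of the prescribed interval. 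A one\hbox{-}line check using the identity $(1-\epsilon\epsilon_1)^2 - (1-\epsilon^2)(1-\epsilon_1^2) = (\epsilon-\epsilon_1)^2 > 0$ shows both roots lie strictly inside $(-1,1)$, so the constraint $b>0$ is automatic on the interval. Thus every $a$ in the prescribed interval that also satisfies (i) satisfies (ii).

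Next I would handle (i). Since $\epsilon_1 > \epsilon \geq 0$ forces $\epsilon_1 > 0$, condition (i) reads simply $a > \epsilon/\epsilon_1$. The two conditions are therefore simultaneously satisfiable precisely when
\[
\frac{\epsilon}{\epsilon_1} \;<\; \epsilon\epsilon_1 + \sqrt{(1-\epsilon^2)(1-\epsilon_1^2)},
\]
because any $a$ in the intersection of $(\epsilon/\epsilon_1,\infty)$ with the prescribed interval will work. Subtracting $\epsilon\epsilon_1$, the desired inequality becomes $\epsilon(1-\epsilon_1^2)/\epsilon_1 < \sqrt{(1-\epsilon^2)(1-\epsilon_1^2)}$; both sides are nonnegative, so squaring and cancelling the factor $1-\epsilon_1^2 > 0$ reduces it to $\epsilon_1^2(1-\epsilon^2) > \epsilon^2(1-\epsilon_1^2)$, i.e.\ $\epsilon_1^2 > \epsilon^2$, which is exactly the hypothesis $\epsilon_1 > \epsilon \geq 0$.

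To conclude I would pick any $a$ in the nonempty open interval $\bigl(\epsilon/\epsilon_1,\ \epsilon\epsilon_1 + \sqrt{(1-\epsilon^2)(1-\epsilon_1^2)}\bigr)$; then $a$ lies in the interval stated in the lemma, (i) holds by construction, and (ii) follows from the quadratic equivalence. There is no serious obstacle in this argument; the only conceptual point is the identification of the roots of the quadratic coming from (ii) with the endpoints of the prescribed interval, which is what makes the conditions compatible. The rest is just a careful check that all quantities squared are nonnegative, together with the one algebraic reduction $\epsilon^2 < \epsilon_1^2$ that uses the hypothesis.
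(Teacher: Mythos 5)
Your argument is correct and is essentially the paper's proof in a different packaging: the paper picks an explicit witness $a=\epsilon\epsilon_1+(1-2t)\sqrt{1-\epsilon^2}\sqrt{1-\epsilon_1^2}$ and then verifies (ii) by contradiction via exactly the quadratic $a^2-2\epsilon\epsilon_1 a+\epsilon^2+\epsilon_1^2-1$ whose roots are the interval endpoints, while you solve that same quadratic inequality for the full solution set and intersect it with $a>\epsilon/\epsilon_1$, reducing nonemptiness to $\epsilon^2<\epsilon_1^2$ just as the paper's choice of $t$ does. No gaps worth flagging.
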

       \begin{proof}
          Choose $a = \epsilon\epsilon_{1} + (1-2t)\sqrt{1-\epsilon^2}\sqrt{1-\epsilon_{1}^2},$ where $0 < t < \frac{1}{2}(1 - \frac{\epsilon \sqrt{1-\epsilon_{1}^2}}{\epsilon_{1} \sqrt{1-\epsilon^2}}).$ \\
          ($i$) As $\epsilon_{1}  > \epsilon$ and $\epsilon_{1}, \epsilon \in [0,1),$ it is easy to see that $ 0 \leq \frac{\epsilon \sqrt{1-\epsilon_{1}^2}}{\epsilon_{1} \sqrt{1-\epsilon^2}} < 1.$ As $t \in (0,1),$ it follows that $a \in (\epsilon\epsilon_{1} - \sqrt{1-\epsilon^2}\sqrt{1-\epsilon_{1}^2},\epsilon\epsilon_{1} + \sqrt{1-\epsilon^2}\sqrt{1-\epsilon_{1}^2}) \subset (-1,1).$  Therefore,
        $$a\epsilon_{1} >  \epsilon\epsilon_{1}^2 + \frac{\epsilon \sqrt{1-\epsilon_{1}^2}}{\epsilon_{1} \sqrt{1-\epsilon^2}}\epsilon_{1}\sqrt{1-\epsilon^2}\sqrt{1-\epsilon_{1}^2}
            >  \epsilon\epsilon_{1}^2 + \epsilon(1-\epsilon_{1}^2)\\
            = \epsilon.$$

       ($ii$) Suppose on the contrary that $\frac{a\epsilon_{1}- \epsilon}{\sqrt{1-\epsilon_{1}^2}b} \geq 1.$ As $a\epsilon_{1}- \epsilon, \sqrt{1-\epsilon_{1}^2}b > 0,$ we have
         \begin{eqnarray*}
           a\epsilon_{1}- \epsilon &\geq& \sqrt{1-\epsilon_{1}^2}b \\
           a^2\epsilon_{1}^2 - 2a\epsilon\epsilon_{1} + \epsilon^2 &\geq&  (1-\epsilon_{1}^2)b^2 \\
           \epsilon_{1}^2 - 2a\epsilon\epsilon_{1} + \epsilon^2 - b^2 &\geq & 0\\
           a^2 - 2a\epsilon\epsilon_{1} + \epsilon_{1}^2 +\epsilon^2 -1 &\geq& 0.
         \end{eqnarray*}
         But then $a \geq \epsilon\epsilon_{1} + \sqrt{1-\epsilon^2}\sqrt{1-\epsilon_{1}^2}$ or $a \leq \epsilon\epsilon_{1} - \sqrt{1-\epsilon^2}\sqrt{1-\epsilon_{1}^2},$ a contradiction.
       \end{proof}
   \begin{lemma}\label{useful2}
   	Let $\mathbb{H}$ be an infinite-dimensional  real Hilbert space. Let  $B_{\mathbb{H}(A)} \cap \overline{R(A)}$ be bounded with respect to $\|\cdot\|.$ Let $\epsilon \in [0,1)$ and $T \in B_{A^{1/2}}(\mathbb{H}) \cap \mathbb{K(\mathbb{H})}$. If $M_{A}^{T} \cap \overline{R(A)}$ contains more than one pair of points, then $T$ cannot be $(\epsilon,A)-$approximate left symmetric point in the sense of Chmieli$\acute{n}$ski.
   \end{lemma}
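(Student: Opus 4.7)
The plan is to construct, under the assumption that $M_A^T \cap \overline{R(A)}$ contains at least two distinct pairs of points, an operator $S \in B_{A^{1/2}}(\mathbb{H}) \cap \mathbb{K}(\mathbb{H})$ with $T \bot_{\epsilon(A)} S$ but $S \not\perp_{\epsilon(A)} T$, thereby witnessing that $T$ fails to be $(\epsilon,A)$-approximate left symmetric in the sense of Chmieli\'nski. The first step is to reduce the problem to $\overline{R(A)}$ via $A_0 = A|_{\overline{R(A)}}$ and $\tilde{T} = PT|_{\overline{R(A)}}$, where $A_0$ is positive definite so $\langle \cdot , \cdot \rangle_{A_0}$ is an honest inner product. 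By Proposition \ref{helpful}(iv), $M_{A_0}^{\tilde{T}} = M_A^T \cap \overline{R(A)}$, and by \cite[Th.2.4]{ssp} this is the $A_0$-unit sphere of some subspace $H_0 \subseteq \overline{R(A)}$; the hypothesis forces $\dim H_0 \geq 2$. Remark \ref{t implies t tilda} will ultimately transfer any $\tilde{S}$-level statement back to the corresponding $S$ on $\mathbb{H}$.

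Normalise so that $\|T\|_A = \|\tilde{T}\|_{A_0} = 1$ and pick $A_0$-orthonormal $x_1, x_2 \in H_0$; by \cite[Th.2.11]{ssp}, $\tilde{T}$ is an $A_0$-isometry on $H_0$, hence $\tilde{T}x_1$ and $\tilde{T}x_2$ are $A_0$-orthonormal as well. Fix any $\epsilon_1 \in (\epsilon, 1)$ and define the rank-one operator
\[
\tilde{S} z \;=\; \langle z, x_1 \rangle_{A_0} \bigl( \epsilon_1 \tilde{T} x_1 + \sqrt{1 - \epsilon_1^2}\, \tilde{T} x_2 \bigr), \qquad z \in \overline{R(A)}.
\]
A one-line computation using $A_0$-orthonormality of $\tilde{T}x_1, \tilde{T}x_2$ gives $\|\tilde{S} z\|_{A_0} = |\langle z, x_1 \rangle_{A_0}|$, so $\|\tilde{S}\|_{A_0} = 1$ and $M_{A_0}^{\tilde{S}} = \{\pm x_1\}$. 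Since $x_2 \in M_{A_0}^{\tilde{T}}$ and $\tilde{S} x_2 = 0$, Theorem \ref{real compact single point} yields $\tilde{T} \bot_{\epsilon(A_0)} \tilde{S}$. In the reverse direction, every $y \in M_{A_0}^{\tilde{S}}$ is of the form $\pm x_1$, and a direct evaluation gives $|\langle \tilde{S} x_1, \tilde{T} x_1 \rangle_{A_0}| = \epsilon_1 > \epsilon = \epsilon \|\tilde{S}\|_{A_0} \|\tilde{T}\|_{A_0}$, so by Theorem \ref{real compact single point}, $\tilde{S} \not\perp_{\epsilon(A_0)} \tilde{T}$. Lifting $\tilde{S}$ to $S : \mathbb{H} \to \mathbb{H}$ by $S|_{N(A)} = 0$ produces a rank-one operator in $B_{A^{1/2}}(\mathbb{H}) \cap \mathbb{K}(\mathbb{H})$, and Remark \ref{t implies t tilda} finishes the argument.

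The main obstacle is the design of the witnessing operator $\tilde{S}$. The construction is deliberately tuned so that $M_{A_0}^{\tilde{S}}$ is as small as possible---the single pair $\{\pm x_1\}$---forcing the reverse orthogonality test to be carried out at $x_1$ alone, where the precomputed value $\epsilon_1$ exceeds the tolerance $\epsilon$. Simultaneously, the second $A_0$-orthonormal vector $x_2 \in H_0$, whose very existence is where the hypothesis "more than one pair of points" enters, supplies a point in $M_{A_0}^{\tilde{T}}$ at which $\tilde{S}$ vanishes, making the forward orthogonality $\tilde{T} \bot_{\epsilon(A_0)} \tilde{S}$ automatic. All remaining verifications---the norm computation, the lift to $\mathbb{H}$, and the invocation of Remark \ref{t implies t tilda}---are routine.
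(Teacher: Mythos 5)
Your proof is correct and takes essentially the same route as the paper: both reduce to $\overline{R(A)}$, pick two $A_0$-orthonormal norm-attaining vectors, and construct a rank-one $\tilde{S}$ supported on one of them and vanishing on the other, so that $\tilde{T}\bot_{\epsilon(A_0)}\tilde{S}$ is witnessed at the vector where $\tilde{S}$ vanishes while $\tilde{S}\not\perp_{\epsilon(A_0)}\tilde{T}$ is detected on $M_{A_0}^{\tilde{S}}$. The only (cosmetic) difference is that the paper simply sets $\tilde{S}z_2=\tilde{T}z_2$, making the offending inner product equal to $1>\epsilon$, so your $\epsilon_1$-rotation of the image vector is an unnecessary extra parameter.
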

\begin{proof}
	Let $A_0 = A\mid_{\overline{R(A)}}.$ Let $P$ be the orthogonal projection on $\overline{R(A)}.$ Let $\tilde{T}= PT\mid_{\overline{R(A)}}.$ Since, $M_{A_0}^{\tilde{T}}$ is $A_0$ unit sphere of some subspace say $H_0$ of $\overline{R(A)},$ it follows that there exist $z_1,z_2 \in S_{\mathbb{H}(A)} \cap \overline{R(A)}$ such that $z_1 \bot_{A_0} z_2$ and $\|\tilde{T}z_1\|_{A_0}=\|\tilde{T}z_2\|_{A_0}=\|\tilde{T}\|_{A_0}.$ Without loss of generality we assume that $\|\tilde{T}\|_{A_0}=1.$ Clearly, $\overline{R(A)} = \langle\{z_2\}\rangle \oplus H,$ where $H  =\langle\{z_2\}\rangle^{\bot_{A_0}}.$ We define a map $\tilde{S} ~:~ \overline{R(A)}~\longrightarrow ~ \overline{R(A)}$ by $\tilde{S}(H) = \theta,$ and $\tilde{S}z_2 = \tilde{T}z_2.$ Clearly, $\tilde{S}$ is compact with respect to $\|\cdot\|$ and $M_{A_0}^{\tilde{S}} = \{\pm z_2\}$ and $\|\tilde{S}\|_{A_0} =1.$ As $\tilde{S}z_1 = \theta,$ it follows that $|\langle \tilde{T}z_1,\tilde{S}z_1\rangle_{A_0}| = 0$ and so $\tilde{T} \perp_{\epsilon(A_0)} \tilde{S}$. But $|\langle \tilde{S}z_2,\tilde{T}z_2\rangle_{A_0}| = \|\tilde{T}z_2\|_{A_0}^2= 1 > \epsilon.$ Hence,  $\tilde{S} \not\perp_{\epsilon(A_0)} \tilde{T}.$
	This completes the proof.
	\end{proof}
       Now we are ready to characterize $(\epsilon,A)-$approximate left symmetric point in the sense of Chmieli$\acute{n}$ski for $A$-bounded compact operator in infinite-dimensional Hilbert spaces.
       \begin{theorem}\label{left symmetry infinite}
          Let $\mathbb{H}$ be an infinite-dimensional separable real Hilbert space. Suppose $B_{\mathbb{H}(A)} \cap \overline{R(A)}$ is bounded with respect to $\|\cdot\|.$ Let $\epsilon \in [0,1)$ and $T \in B_{A^{1/2}}(\mathbb{H}) \cap \mathbb{K(\mathbb{H})}$. Then $T$ is $(\epsilon,A)-$approximate left symmetric point in the sense of Chmieli$\acute{n}$ski if and only if $\|T\|_A =0.$
       \end{theorem}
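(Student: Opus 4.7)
The plan is to establish only the nontrivial necessary direction; sufficiency is immediate, since $\|T\|_A=0$ makes every inequality in the definition of $(\epsilon,A)$-approximate left symmetry trivially two-sided. Suppose then $\|T\|_A\neq 0$ and normalise $\|T\|_A=1$. Following the reduction used in Theorem \ref{real A bounded} and Theorem \ref{right symmetry infinite compact}, let $A_0=A|_{\overline{R(A)}}$, let $P$ be the orthogonal projection onto $\overline{R(A)}$, and set $\tilde T=PT|_{\overline{R(A)}}$; then $\bigl(\overline{R(A)},\langle\cdot,\cdot\rangle_{A_0}\bigr)$ is a real Hilbert space and $\tilde T$ is compact on it with $\|\tilde T\|_{A_0}=1$. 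By Remark \ref{t implies t tilda}, producing any compact $\tilde S\in B_{A_0^{1/2}}(\overline{R(A)})$ with $\tilde T\bot_{\epsilon(A_0)}\tilde S$ and $\tilde S\not\bot_{\epsilon(A_0)}\tilde T$ suffices, for its zero-extension to $N(A)$ supplies the required operator $U\in B_{A^{1/2}}(\mathbb{H})\cap\mathbb{K}(\mathbb{H})$ witnessing the failure of left symmetry for $T$.

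I split into two cases according to $M_{A_0}^{\tilde T}$. If $M_{A_0}^{\tilde T}$ contains strictly more than one antipodal pair, Lemma \ref{useful2} furnishes the desired $\tilde S$ at once. Otherwise $M_{A_0}^{\tilde T}=\{\pm x_0\}$, and the compactness-plus-weak-compactness argument already used in Theorem \ref{right symmetry infinite compact} produces an $A_0$-unit vector $z_0\in\langle x_0\rangle^{\bot_{A_0}}$ attaining $\epsilon_1:=\sup\{\|\tilde Ty\|_{A_0}:\|y\|_{A_0}=1,\ y\bot_{A_0}x_0\}$; the uniqueness of $\pm x_0$ as norm attainer forces $\epsilon_1<1$.

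In this remaining case the plan is to pick a carefully calibrated $y_0=ax_0+bz_0\in S_{\overline{R(A)}(A_0)}$ with $a^2+b^2=1$, $b>0$, and a low-rank compact operator $\tilde S$ attaining its $A_0$-norm only at $\pm y_0$, designed so that Theorem \ref{real compact single point} certifies $\tilde T\bot_{\epsilon(A_0)}\tilde S$ at the test point $x_0\in M_{A_0}^{\tilde T}$, while refuting $\tilde S\bot_{\epsilon(A_0)}\tilde T$ at the only available test points $\pm y_0\in M_{A_0}^{\tilde S}$. When $\epsilon_1>\epsilon$, Lemma \ref{left symmetry lemma} is clearly engineered precisely for this purpose: its interval $a\in(\epsilon\epsilon_1-\sqrt{1-\epsilon^2}\sqrt{1-\epsilon_1^2},\,\epsilon\epsilon_1+\sqrt{1-\epsilon^2}\sqrt{1-\epsilon_1^2})$ together with the two bounds $a\epsilon_1>\epsilon$ and $(a\epsilon_1-\epsilon)/(b\sqrt{1-\epsilon_1^2})<1$ translate exactly into the scalar constraints needed to make the $x_0$-test satisfy $|\langle \tilde Tx_0,\tilde Sx_0\rangle_{A_0}|\le\epsilon$ while the $y_0$-test yields $|\langle \tilde Ty_0,\tilde Sy_0\rangle_{A_0}|>\epsilon$; concretely, one takes $\tilde Sv=\langle v,y_0\rangle_{A_0}\,v_0$ with $v_0$ a unit combination of $\tilde Tx_0$ and $\tilde Tz_0$ selected from the range prescribed by the lemma. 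The complementary regime $\epsilon_1\le\epsilon$ is addressed by a separate direct construction exploiting the extra room in the orthogonal complement of $\langle \tilde Tx_0\rangle$. The principal obstacle throughout is this delicate simultaneous control of two inner-product inequalities at two distinct unit vectors inside $\langle x_0,z_0\rangle$, which is exactly the content that Lemma \ref{left symmetry lemma} packages for us.
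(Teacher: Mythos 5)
Your overall strategy matches the paper's: reduce to $\tilde T=PT|_{\overline{R(A)}}$ on $\overline{R(A)}$, use Lemma \ref{useful2} to dispose of the case where $M_{A_0}^{\tilde T}$ contains more than one antipodal pair, and then exhibit a compact $\tilde S$ with $\tilde T\bot_{\epsilon(A_0)}\tilde S$ but $\tilde S\not\bot_{\epsilon(A_0)}\tilde T$, extended by zero to $N(A)$. However, there is a genuine gap precisely where the construction has to work hardest. First, a misreading: in Lemma \ref{left symmetry lemma} the parameter $\epsilon_1$ is a \emph{free} number in $(\epsilon,1)$ chosen by the prover --- in the paper's proof it is the cosine $\langle z_1,x\rangle_{A_0}$ of a rotation angle used to define the test basis $\{z_1,z_2\}$ --- and not the quantity $\sup\{\|\tilde Ty\|_{A_0}: y\bot_{A_0}x_0,\ \|y\|_{A_0}=1\}$ that you introduce. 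Accordingly, condition ($ii$) of that lemma exists to guarantee an admissible scaling factor $\alpha<1$ for the second column of a rank-two $\tilde S$; it has no counterpart in your rank-one ansatz $\tilde Sv=\langle v,y_0\rangle_{A_0}v_0$, so the lemma does not ``translate exactly'' into your constraints, and your case split on the size of your $\epsilon_1$ is not the paper's case split (which is on whether $\tilde T$ annihilates $\langle x_0\rangle^{\bot_{A_0}}$).

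More seriously, your proposed operator cannot work in the degenerate case. Write $v_0=p\,\tilde Tx_0+q\,u$ with $u$ a unit vector chosen along $\tilde Tz_0$ (recall $\tilde Tz_0\bot_{A_0}\tilde Tx_0$ since $x_0$ is norm-attaining). Your two tests read $|p\,a|\le\epsilon$ at $x_0\in M_{A_0}^{\tilde T}$ and $|p\,a+q\,b\,\|\tilde Tz_0\|_{A_0}|>\epsilon$ at $y_0\in M_{A_0}^{\tilde S}$. If $\tilde T$ vanishes on $\langle x_0\rangle^{\bot_{A_0}}$ --- your $\epsilon_1=0$, which covers every rank-one $\tilde T$ and is exactly the paper's Case ($I$) --- the second condition collapses to $|p\,a|>\epsilon$ and contradicts the first, so \emph{no} operator of your proposed form separates the two orthogonality relations, whatever $a,b,p,q$. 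This is the situation for which the paper introduces a unit vector $w\bot_{A_0}\tilde Tx$ lying outside the range of $\tilde T$, takes $\tilde S$ of rank two ($\tilde Sz_1=a\tilde Tx+bw$, $\tilde Sz_2=\alpha(b\tilde Tx-aw)$, $\tilde S=0$ on $\langle\{z_1,z_2\}\rangle^{\bot_{A_0}}$), and uses both parts of Lemma \ref{left symmetry lemma} to choose $\alpha$ so that $M_{A_0}^{\tilde S}=\{\pm z_1\}$ while $\epsilon_1a-\sqrt{1-\epsilon_1^2}\,\alpha b\in(-\epsilon,\epsilon)$ and $a\epsilon_1>\epsilon$. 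Your single sentence promising ``a separate direct construction exploiting the extra room in the orthogonal complement of $\langle\tilde Tx_0\rangle$'' gestures at the right idea but supplies none of this content, and that content is the core of the necessity proof; the regime where your rank-one construction does succeed (your $\epsilon_1>0$, after elementary adjustment of $a$ and $p$) is the easier one.
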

       \begin{proof}Sufficient part of the theorem follows trivially. We only prove the necessary part.\\
         Suppose on the contrary that $\|T\|_A \neq 0.$ Without loss of generality, we assume that $\|T\|_A =1.$ Let $A_0 = A\mid_{\overline{R(A)}}.$ Let $P$ be the orthogonal projection on $\overline{R(A)}.$ Let $\tilde{T}= PT\mid_{\overline{R(A)}}.$ As $\|T\|_A =1,$ therefore $\|\tilde{T}\|_{A_0} =1.$ Clearly, $\tilde{T}$ is compact with respect to $\|\cdot\|.$ Since  $\|\cdot\|$ and $\|\cdot\|_{A_0}$ are equivalent on $\overline{R(A)}$ and $ B_{\mathbb{H}(A)} \cap \overline{R(A)}$ is weakly compact with respect to $\|\cdot\|,$ there exists $x \in S_{\mathbb{H}(A)} \cap \overline{R(A)}$ such that $\|\tilde{T}x\|_{A_0}=\|\tilde{T}\|_{A_0} =1.$ By virtue of Lemma \ref{useful2}, we assume $M_{A_0}^{\tilde{T}}=\{\pm x\}.$ If we can construct $\tilde{S}$ on $\overline{R(A)}$ such that $\tilde{T} \perp_{\epsilon(A_0)} \tilde{S}$ but $\tilde{S} \not\perp_{\epsilon(A_0)} \tilde{T},$ then we are done. Clearly, $\overline{R(A)} = \langle\{x\}\rangle \oplus H_1,$ where $H_1 =\langle\{x\}\rangle^{\bot_{A_0}}.$ Let $\{x_1,x_2,...\}$ be an $A_0$ orthonormal basis of $H_1.$ Therefore, $\{x,x_1,x_2,...\}$ is an $A_0$ orthonormal basis of $\overline{R(A)}.$\\
         \underline{Case ($I$)}:
           $\tilde{T}(H_1)=\{\theta\}.$\\
           Consider $z_1 = \epsilon_1 x + \sqrt{1 - \epsilon_1^2} x_1$ and $z_2 = - \sqrt{1 - \epsilon_1^2} x + \epsilon_1 x_1,$ where $\epsilon_1 \in [0,1)$ and $\epsilon_1 >\epsilon.$
         Clearly, $\{z_1,z_2,x_2,...\}$ is an  $A_0$ orthonormal basis of $\overline{R(A)}.$ It is immediate to see that $\overline{R(A)} = \langle\{z_1,z_2\}\rangle \oplus H_2,$ where $H_2 =\langle\{z_1,z_2\}\rangle^{\bot_{A_0}}.$ We define a map $\tilde{S} ~:~ \overline{R(A)}~\longrightarrow ~ \overline{R(A)}$ by
         $$ \tilde{S}(H_2) =\theta,
            \tilde{S}z_1 = a\tilde{T}x + b w,
            \tilde{S}z_2 =\alpha(b \tilde{T}x - a w);$$
           where the choice of $a$ is the same, described in Lemma \ref{left symmetry lemma} (i.e. $a = \epsilon\epsilon_{1} + (1-2t)\sqrt{1-\epsilon^2}\sqrt{1-\epsilon_{1}^2},$ where $0 < t < \frac{1}{2}(1 - \frac{\epsilon \sqrt{1-\epsilon_{1}^2}}{\epsilon_{1} \sqrt{1-\epsilon^2}})$) with $a^2 +b ^2 =1$, $b>0$ and $\frac{a\epsilon_{1}- \epsilon}{\sqrt{1-\epsilon_{1}^2}b} < \alpha < \min\{1, \frac{a\epsilon_{1}+ \epsilon}{\sqrt{1-\epsilon_{1}^2}b}\}$ and $w \bot_{A_0} \tilde{T}x,$ $w \in S_{\mathbb{H}(A)} \cap \overline{R(A)}.$ Since, $\tilde{S}$ is of finite rank and  $\|\cdot\|$ and $\|\cdot\|_{A_0}$ are equivalent on $\overline{R(A)}$,  $\tilde{S}$ is compact with respect to $\|\cdot\|$ on $\overline{R(A)}.$ Let  $z \in S_{\mathbb{H}(A)} \cap \overline{R(A)}.$ Hence, $z = c_1 z_1 + c_2 z_2 + cy,$ where $c_1,c_2,c \in \mathbb{R}$ and $y \in H_2.$ Therefore, $\tilde{S}z=(c_1 a+ c_2 \alpha b) \tilde{T}x + (c_1 b- c_2 \alpha a)w.$ Thus, $$\|\tilde{S}z\|_{A_0}^2 = (c_1 a+ c_2 \alpha b)^2 + (c_1 b- c_2 \alpha a)^2 = c_1^2  +\alpha^2 c_2^2 \leq c_1^2+c_2^2 \leq 1. $$ It is easy to see that $\|\tilde{S}z_1\|_{A_0} = \sqrt{a^2 +b^2} =1.$ Thus, $M_{A_0}^{\tilde{S}} = \{\pm z_1\}$ and $\|\tilde{S}\|_{A_0} =1.$ Now we show that $\tilde{T} \perp_{\epsilon(A_0)} \tilde{S}$ but $\tilde{S} \not\perp_{\epsilon(A_0)} \tilde{T}.$ By our construction, $x = \epsilon_1 z_1 - \sqrt{1 - \epsilon_1^2} z_2.$ So,
           $\langle \tilde{T}x, \tilde{S}x\rangle_{A_0} = \langle \tilde{T}x, \epsilon_1\tilde{S}z_1 - \sqrt{1-\epsilon_1^2}\tilde{S}z_2\rangle_{A_0}
               = \epsilon_1 a - \sqrt{1-\epsilon_1^2}\alpha b.$

              By our choice of $\alpha,$ it is easy to see that $- \epsilon < \epsilon_1 a - \sqrt{1-\epsilon_1^2}\alpha b < \epsilon.$ Hence, $|\langle \tilde{T}x, \tilde{S}x\rangle_{A_0}| \leq \epsilon = \epsilon \|\tilde{T}\|_{A_0}\|\tilde{S}\|_{A_0}$ and so $\tilde{T} \perp_{\epsilon(A_0)} \tilde{S}$. Again,
              $ |\langle \tilde{S}z_1, \tilde{T}z_1\rangle_{A_0}| = |\langle a\tilde{T}x +b w, \tilde{T}z_1 \rangle_{A_0}|
                 = |\langle a\tilde{T}x +b w, \epsilon_1\tilde{T}x \rangle_{A_0}|
                 = |a \epsilon_1|
                 > \epsilon.$
             Hence, $\tilde{S} \not\perp_{\epsilon(A_0)} \tilde{T}.$\\
                \underline{Case ($II$)}:
                 $\tilde{T}(H_1) \neq \{\theta\}.$\\ Thus, $\tilde{T}x_k \neq \theta,$ for some basis element $x_k \in H_1.$ Then,  $\tilde{T}x_k = \beta w,$  where $w \in S_{\mathbb{H}(A)} \cap \overline{R(A)}$ such that $w \bot_{A_0} \tilde{T}x$ and $\beta \in \mathbb{R}.$ Without loss of generality we may assume that $\beta \geq 0.$ Let $z_1 = \epsilon_1 x + \sqrt{1 - \epsilon_1^2} x_k$ and
           $z_2 = - \sqrt{1 - \epsilon_1^2} x + \epsilon_1 x_k,$ where $\epsilon_1 \in [0,1)$ and $\epsilon_1 >\epsilon.$
         Clearly, $\{z_1,z_2,x_1,x_2,...x_{k-1},x_{k+1},...\}$ is an  $A_0$ orthonormal basis of $\overline{R(A)}.$ It is immediate to see that $\overline{R(A)} = \langle\{z_1,z_2\}\rangle \oplus H_3,$ where $H_3 =\langle\{z_1,z_2\}\rangle^{\bot_{A_0}}.$ We define a map $\tilde{S} ~:~ \overline{R(A)}~\longrightarrow ~ \overline{R(A)}$ by
           $$\tilde{S}(H_3) = \theta,
            \tilde{S}z_1 = a\tilde{T}x + b w,
            \tilde{S}z_2 = \alpha(b \tilde{T}x - a w);$$

            where the choice of $a$ is the same, described in Lemma \ref{left symmetry lemma} and $a^2 +b ^2 =1$, $b>0$ and $\frac{a\epsilon_{1}- \epsilon}{\sqrt{1-\epsilon_{1}^2}b} < \alpha < \min\{1, \frac{a\epsilon_{1}+ \epsilon}{\sqrt{1-\epsilon_{1}^2}b}\}.$  Since $\tilde{S}$ is compact with respect to $\|\cdot\|$ on $\overline{R(A)},$ Proceeding in the same manner as in case ($I$), we obtain $M_{A_0}^{\tilde{S}}=\{\pm z_1\}$ and $\|\tilde{S}\|_{A_0} =1.$ Also, we have
           $\langle \tilde{T}x, \tilde{S}x\rangle_{A_0} = \langle \tilde{T}x, \epsilon_1\tilde{S}z_1 - \sqrt{1-\epsilon_1^2}\tilde{S}z_2\rangle_{A_0}
               = \epsilon_1 a - \sqrt{1-\epsilon_1^2}\alpha b.$ By our choice of $\alpha,$ it is easy to see that $- \epsilon < \epsilon_1 a - \sqrt{1-\epsilon_1^2}\alpha b < \epsilon.$ Hence $|\langle \tilde{T}x, \tilde{S}x\rangle_{A_0}| \leq \epsilon = \epsilon \|\tilde{T}\|_{A_0}\|\tilde{S}\|_{A_0}.$ Thus $\tilde{T} \perp_{\epsilon(A_0)} \tilde{S}$. Again, $|\langle \tilde{S}z_1, \tilde{T}z_1\rangle_{A_0}| = |\langle a\tilde{T}x +b w, \tilde{T}z_1 \rangle_{A_0}|
                 = |a \epsilon_1 + b \sqrt{1- \epsilon_1^2} \beta|
                 = a \epsilon_1 + b \sqrt{1- \epsilon_1^2} \beta
                 > \epsilon.$ Hence, $\tilde{S} \not\perp_{\epsilon(A_0)} \tilde{T}.$
            This completes the proof of the theorem.
       \end{proof}
   The characterization of $(\epsilon,A)-$approximate left symmetric point in the sense of Chmieli$\acute{n}$ski for $A-$bounded operators in a finite-dimensional real Hilbert space can be obtained in the same manner. Therefore, we omit the proof.
 \begin{corollary}\label{left syymetry finite}
    Let $\mathbb{H}$ be a finite-dimensional real Hilbert space. Let $\epsilon \in [0,1)$ and $T \in B_{A^{1/2}}(\mathbb{H})$. Then $T$ is $(\epsilon,A)-$approximate left symmetric point in the sense of Chmieli$\acute{n}$ski if and only if $\|T\|_A =0.$
 \end{corollary}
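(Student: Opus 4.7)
The plan is to carry over the argument of Theorem \ref{left symmetry infinite} essentially verbatim, observing that in a finite-dimensional real Hilbert space $\mathbb{H}$ every linear operator is automatically compact with respect to $\|\cdot\|$ and $B_{\mathbb{H}(A)} \cap \overline{R(A)}$ is automatically bounded with respect to $\|\cdot\|$. The sufficient direction is immediate: if $\|T\|_A = 0$ then both $T \bot_{\epsilon(A)} S$ and $S \bot_{\epsilon(A)} T$ hold trivially for every $S \in B_{A^{1/2}}(\mathbb{H})$. The bulk of the work is the contrapositive of the necessary direction.

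Assume $\|T\|_A \neq 0$ and, without loss of generality, $\|T\|_A = 1$. Let $P$ be the orthogonal projection onto $\overline{R(A)}$, set $A_0 = A\mid_{\overline{R(A)}}$, and $\tilde{T} = PT\mid_{\overline{R(A)}}$; by Proposition \ref{helpful}(i) we have $\|\tilde{T}\|_{A_0} = 1$, and $A_0$ is a genuine inner product on the finite-dimensional space $\overline{R(A)}$. Applying the argument of Lemma \ref{useful2} (whose proof uses only the compactness of $\tilde{T}$ and $\tilde{S}$ and the existence of an $A_0$-orthogonal pair in $M_{A_0}^{\tilde{T}}$, all automatic in finite dimension) we may reduce to the case $M_{A_0}^{\tilde{T}} = \{\pm x\}$ for some $x \in S_{\mathbb{H}(A)} \cap \overline{R(A)}$.

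Now decompose $\overline{R(A)} = \langle\{x\}\rangle \oplus H_1$ with $H_1 = \langle\{x\}\rangle^{\bot_{A_0}}$ and choose an $A_0$-orthonormal basis of $H_1$. Mimicking Cases (I) and (II) of the proof of Theorem \ref{left symmetry infinite}, I fix $\epsilon_1 \in (\epsilon,1)$ and set $z_1 = \epsilon_1 x + \sqrt{1-\epsilon_1^2}\, x_k$, $z_2 = -\sqrt{1-\epsilon_1^2}\, x + \epsilon_1 x_k$ (with $x_k \in H_1$ a basis vector chosen so that $\tilde{T}x_k = \theta$ in Case (I) or $\tilde{T}x_k = \beta w$ with $w \bot_{A_0} \tilde{T}x$ and $w \in S_{\mathbb{H}(A)} \cap \overline{R(A)}$, $\beta \geq 0$, in Case (II)). Define $\tilde{S}$ on $\overline{R(A)}$ by $\tilde{S}z_1 = a \tilde{T}x + b w$, $\tilde{S}z_2 = \alpha(b\tilde{T}x - a w)$, and $\tilde{S} = 0$ on the orthogonal complement of $\langle\{z_1,z_2\}\rangle$, where $a$ is as in Lemma \ref{left symmetry lemma}, $a^2+b^2 = 1$, $b>0$, and $\alpha$ is chosen so that $\frac{a\epsilon_1-\epsilon}{\sqrt{1-\epsilon_1^2}\,b} < \alpha < \min\bigl\{1, \tfrac{a\epsilon_1+\epsilon}{\sqrt{1-\epsilon_1^2}\,b}\bigr\}$. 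Routine computation (identical to the infinite-dimensional case) gives $\|\tilde{S}\|_{A_0} = 1$, $M_{A_0}^{\tilde{S}} = \{\pm z_1\}$, $|\langle \tilde{T}x,\tilde{S}x\rangle_{A_0}| \leq \epsilon$ (which, by Theorem \ref{real compact tx app ortho sx}, forces $\tilde{T} \bot_{\epsilon(A_0)} \tilde{S}$), while $|\langle \tilde{S}z_1,\tilde{T}z_1\rangle_{A_0}| > \epsilon = \epsilon\|\tilde{S}\|_{A_0}\|\tilde{T}\|_{A_0}$, so that $\tilde{S} \not\bot_{\epsilon(A_0)} \tilde{T}$.

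Finally, define $U \in B_{A^{1/2}}(\mathbb{H})$ by $Ux = \tilde{S}x$ for $x \in \overline{R(A)}$ and $Ux = 0$ for $x \in N(A)$, so that $\tilde{U} = \tilde{S}$. By Remark \ref{t implies t tilda}, $T \bot_{\epsilon(A)} U$ while $U \not\bot_{\epsilon(A)} T$, contradicting the $(\epsilon,A)$-approximate left symmetry of $T$. The main obstacle is purely bookkeeping: checking that the parameter $a$ furnished by Lemma \ref{left symmetry lemma} together with the range of $\alpha$ forces $|\langle \tilde{T}x,\tilde{S}x\rangle_{A_0}| = |\epsilon_1 a - \sqrt{1-\epsilon_1^2}\,\alpha b| < \epsilon$ from one side and $|\langle \tilde{S}z_1,\tilde{T}z_1\rangle_{A_0}| = a\epsilon_1$ (Case I) or $a\epsilon_1 + b\sqrt{1-\epsilon_1^2}\,\beta$ (Case II) strictly exceeds $\epsilon$ on the other. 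These estimates are exactly the content of Lemma \ref{left symmetry lemma} and require no adjustment for the finite-dimensional setting.
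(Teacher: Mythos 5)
Your proposal is correct and is exactly the route the paper intends: the paper omits the proof of this corollary, stating only that it can be obtained ``in the same manner'' as Theorem \ref{left symmetry infinite}, and your argument carries out that adaptation faithfully, with compactness of the operators and boundedness of $B_{\mathbb{H}(A)}\cap\overline{R(A)}$ now automatic in finite dimension. The only caveat --- shared equally by the paper's own construction --- is that Case (I) requires $\dim\overline{R(A)}\geq 2$ to supply the vectors $x_k$ and $w$; when $\dim\overline{R(A)}=1$ the relation $\bot_{\epsilon(A)}$ becomes symmetric and the construction (and indeed the stated equivalence) degenerates.
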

 Now we can easily prove the following:
 \begin{theorem}
   Let $\mathbb{H}$ be a finite-dimensional real Hilbert space. Let $\epsilon \in [0,1)$ and $T \in \mathbb{L(\mathbb{H})}$. Then the following conditions are equivalent:
          \begin{itemize}
            \item [($i$)] $T$ is left symmetric.
            \item [($ii$)] $T$ is approximate left symmetric.
            \item [($ii$)] $T$ is zero operator.
          \end{itemize}
        \end{theorem}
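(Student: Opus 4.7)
The plan is to recognize this three-way equivalence as an immediate specialization of Corollary~\ref{left syymetry finite} to $A=I$, together with a trivial converse direction. The first step is to observe how the semi-Hilbertian machinery collapses onto the classical operator-theoretic setting under $A=I$: one has $B_{A^{1/2}}(\mathbb{H})=\mathbb{L}(\mathbb{H})$, $\|\cdot\|_A=\|\cdot\|$, and the relation $\bot_{\epsilon(A)}$ coincides with $\bot_B^{\epsilon}$. Consequently $(\epsilon,A)$-approximate left symmetry in the sense of Chmieli\'{n}ski reduces to approximate left symmetry, and the $\epsilon=0$ instance is precisely left symmetry; the norm condition $\|T\|_A=0$ becomes $T=0$.

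Next I would dispose of the easy implications (iii)$\Rightarrow$(i) and (iii)$\Rightarrow$(ii). If $T=0$, then for every $S\in\mathbb{L}(\mathbb{H})$ the defining inequality
\[
\|T+\lambda S\|^2 = \|\lambda S\|^2 \geq 0 = \|T\|^2 - 2\epsilon\|T\|\|\lambda S\|
\]
holds trivially, so $T\bot_B^{\epsilon}S$, and simultaneously $\|S+\lambda T\|=\|S\|$ gives $S\bot_B^{\epsilon}T$. Hence the implication $T\bot_B^{\epsilon}S\Rightarrow S\bot_B^{\epsilon}T$ is vacuously satisfied for every $S$, and the same holds with $\epsilon=0$, establishing both converses.

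For the nontrivial directions I would invoke Corollary~\ref{left syymetry finite} twice. Applied with $A=I$ and the given $\epsilon\in[0,1)$, it forces any approximate left symmetric $T\in\mathbb{L}(\mathbb{H})$ to satisfy $\|T\|=0$, yielding (ii)$\Rightarrow$(iii). Applied with $A=I$ and $\epsilon=0$, and using that left symmetry is precisely the $\epsilon=0$ case of approximate left symmetry, it yields (i)$\Rightarrow$(iii). Chaining these with the trivial converses closes the cycle.

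I do not anticipate any real obstacle: the proof is essentially a translation exercise, and the only point requiring care is the identification, via the definitions in Section~1, that the semi-Hilbertian notions flatten correctly onto the classical ones when $A=I$. All the substantive work is already carried out in Corollary~\ref{left syymetry finite} (and behind it, Theorem~\ref{left symmetry infinite}).
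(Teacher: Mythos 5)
Your proposal is correct and follows exactly the route the paper intends: the theorem is stated immediately after Corollary~\ref{left syymetry finite} with the remark that it now follows easily, i.e., by specializing $A=I$ (so that $B_{A^{1/2}}(\mathbb{H})=\mathbb{L}(\mathbb{H})$, $\|\cdot\|_A=\|\cdot\|$, and $\bot_{\epsilon(A)}$ becomes $\bot_B^{\epsilon}$), applying the corollary once with the given $\epsilon$ and once with $\epsilon=0$, and noting the trivial converse when $T=0$. The only cosmetic quibble is that the implication in (iii)$\Rightarrow$(ii) is not vacuous but holds because the conclusion $S\bot_B^{\epsilon}T$ is always true; this does not affect correctness.
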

    We end this section with the following closing remark:
       \begin{remark}
         Note that in Theorem \ref{left symmetry infinite}, if we put $A=I$, we obtain the characterization of approximate left symmetric point in $\mathbb{L(\mathbb{H})}$. Moreover, if we put $A=I$ and $\epsilon =0,$ we obtain the characterization of left symmetric point in $\mathbb{L(\mathbb{H})}.$ Hence, Theorem \ref{left symmetry infinite} generalizes  \cite[Th.2.10]{gsp} and  \cite[Th.3.3]{turn} for compact operators on real Hilbert space.
       \end{remark}


\begin{thebibliography}{99}
\bibitem{acg} M. L. Arias, G. Corach and M. C. Gonzalez,
\textit{Metric properties of projections in semi-Hilbertian spaces}, Integral Equations Operator Theory,
\textbf{62(1)} (2008), 11-28.



\bibitem{acg2} M. L. Arias, G. Corach and M. C. Gonzalez,
\textit{Partial isometries in semi-Hilbertian spaces}, Linear Algebra Appl.,
\textbf{428(7)} (2008), 1460-1475.






\bibitem{BS} R. Bhatia and P. $\breve{S}$emrl,
\textit{Orthogonality of matrices and some distance problems}, Linear Algebra Appl.,
 \textbf{287} (1999), 77-85.



\bibitem{B} G. Birkhoff,
\textit{Orthogonality in linear metric spaces}, Duke Math. J.,
\textbf{1} (1935), 169-172.


\bibitem{C} J. Chmieli$\acute{n}$ski ,
\textit{On an $\epsilon-$Birkhoff orthogonality}, J. Inequal. Pure and Appl. Math.,
\textbf{6}(2005), Art. 79.

\bibitem{CSW} J. Chmieli$\acute{n}$ski, T. Stypu{\l}a and P. W$\acute{o}$jcik,
\textit{Approximate orthogonality in normed spaces and its applications}, Linear Algebra Appl.,
\textbf{531} (2017), 305-317.

\bibitem{SSD} S. S. Dragomir,
\textit{On approximation of continuous linear functionals in normed linear spaces}, An. Univ. Timisoara Ser. Stiint. Mat.,
\textbf{29(1)} (1991), 51-58.




\bibitem{GSP} P. Ghosh, D. Sain and K. Paul,
\textit{ On symmetry of Birkhoff-James orthogonality of linear operators}, Adv. Oper. Theory,
\textbf{2(4)} (2017), 428-434.

\bibitem{gsp} P. Ghosh, D. Sain and K. Paul,
\textit{Orthogonality of bounded linear operators}, Linear Algebra Appl.,
\textbf{500} (2016), 43-51.


\bibitem{J}R. C. James,
\textit{Orthogonality and linear functionals in normed linear spaces}, Trans. Amer. Math. Soc.,
\textbf{61} (1947), 265-292.

\bibitem{psmm} K. Paul, D. Sain, A. Mal and K. Mandal,
\textit{ Orthogonality  of bounded linear operators on complex Banach spaces}, Adv. Oper. Theory,
\textbf{3} (2018), 699-709.



 \bibitem{S} D. Sain,
\textit{Birkhoff-James orthogonality  of  linear operators on finite dimensional Banach spaces}, J. Math. Anal. Appl.,
\textbf{447} (2017), 860-866.

\bibitem{sp} D. Sain and K. Paul,
\textit{Operator norm attainment and inner product spaces}, Linear Algebra Appl.,
\textbf{439} (2013), 2448-2452.



\bibitem{spm} D. Sain, K. Paul and A. Mal,
\textit{A complete characterization of Birkhoff-James orthogonality  in infinite dimensional normed spaces}, J. Operator Theory,
\textbf{80(2)} (2018), 399-413.

\bibitem{dskpam} D. Sain, K. Paul and A. Mal,
\textit{On approximate Birkhoff-James orthogonality  and normal  cones in a normed spaces}, J. Convex Analysis,
\textbf{26(1)} (2019), 341-351.



\bibitem{smp} J. Sen, A. Mal and K. Paul,
\textit{Characterization of approximate Birkhoff-James orthogonality sets in a Banach space}, J. Convex Analysis,
\textbf{27(3)} (2020), 881-892.


\bibitem{ssp} J. Sen, D. Sain and K. Paul,
\textit{Orthogonality and norm-attainment in semi-Hilbertian spaces}, https://doi.org/10.1007/s43034-020-00104-7.



\bibitem{turn} A. Turn$\check{s}$ek,
\textit{A remark on orthogonality and symmetry of operators in B(H)},  Linear Algebra Appl.,
\textbf{535} (2017), 141-150.

\bibitem{t} A. Turn$\check{s}$ek,
\textit{On operators preserving James' orthogonality},  Linear Algebra Appl.,
\textbf{407} (2005), 189-195.





\bibitem{z} A. Zamani,
\textit{Birkhoff-James orthogonality of operators in semi-Hilbertian spaces and its application}, Ann. Funct. Anal,
\textbf{10(3)} (2019),  433-445.



\end{thebibliography}
\end{document}